\numberwithin{equation}{section}
\newtheorem{thm}{Theorem}[section]
\newtheorem{lem}[thm]{Lemma}
\newtheorem{cor}[thm]{Corollary}
\newtheorem{prop}[thm]{Proposition}
\newtheorem{THM}{Theorem}
\theoremstyle{definition}
\newtheorem{ex}[thm]{Example}
\newtheorem{DEF}[thm]{Definition}
\theoremstyle{remark}   
\newtheorem{remark}[thm]{Remark}
\newcommand\B{{\mathcal B}}
\renewcommand\H{{\mathcal H}}
\newcommand\I{{\mathcal I}}
\newcommand\K{{\mathcal K}}
\renewcommand\P{{\mathcal P}}
\newcommand\R{{\mathbb R}}
\renewcommand\S{{\mathbb S}}
\newcommand\T{{\mathcal T}}
\newcommand{\N}{\mathbb{N}}
\newcommand\e{\varepsilon}
\newcommand\Om{\Omega}
\newcommand\gam{\gamma} 
\newcommand\lam{\lambda}
\newcommand\Lam{\Lambda}
\renewcommand\k{\kappa}
\newcommand{\n}{\vec{n}}
\newcommand{\cc}{\mathsf{c}}
\newcommand{\sd}{\operatorname{sd}}
\newcommand{\dist}{\operatorname{dist}}
\newcommand\per{\operatorname{Per}}
\newcommand\sign{\operatorname{sign}}
\newcommand{\dv}{\operatorname{div}}
\DeclareMathOperator{\co}{Cone}
\DeclareMathOperator{\spn}{span}
\DeclareMathOperator{\trace}{trace}
\renewcommand\liminf{\mathop{\lim\,\inf}\limits}%
\newcommand\liminfs{\mathop{\lim\,\inf{}_*}\limits}%
\renewcommand\limsup{\mathop{\lim\,\sup}\displaylimits}%
\newcommand\limsups{\mathop{\lim\,\sup{}^*}\limits}%
\newcommand\argmax{\mathop{\arg\,\max}\limits}%
\newcommand{\oB}{{\overline B}}
\newcommand{\oL}{{\overline L}}
\newcommand{\tu}{{\widetilde u}}
\newcommand{\tOm}{{\widetilde \Omega}}
\newcommand{\hu}{{\widehat u}}
\newcommand{\hOm}{{\widehat \Omega}}
\newcommand{\Rn}{{\mathbb R^N}}
\newcommand{\Rpz}{{[0,\infty)}}
\newcommand{\ot}{{\Omega_t}}
\newcommand{\ott}{{(\ot)_{t\geq0}}}
\newcommand{\oz}{{\Omega_0}}
\newcommand{\lh}{\lambda^h}
\newcommand{\Lh}{{\Lambda^{h}}}
\newcommand{\kp}{\k_\phi}
\definecolor{darkgreen}{rgb}{0,0.4,0}
\begin{document}
\title{On volume-preserving crystalline mean curvature flow}
\author{Inwon Kim}
\address{Department of Mathematics, UCLA, Los Angeles, USA}
\thanks{IK is supported by NSF DMS 1900804 and the Simons Foundation Fellowship. }
\email{ikim@math.ucla.edu}

\author{Dohyun Kwon}
\address{Department of Mathematics, University of Wisconsin-Madison, 480 Lincoln Dr., Madison, WI 53706, USA}
\email{dkwon7@wisc.edu}
\thanks{}
\date{}

\author{Norbert Po{\v{z}}{\'a}r}
\address{Faculty of Mathematics and Physics, Institute of Science and Engineering, Kanazawa University, Kakuma town, Kanazawa, Ishikawa 920-1192, Japan}
\email{npozar@se.kanazawa-u.ac.jp}
\thanks{NP is supported by JSPS KAKENHI Wakate Grant (No. 18K13440).}

\subjclass[2010]{}


\begin{abstract}
In this work we consider the global existence of volume-preserving crystalline curvature flow in a non-convex setting. We show that a natural geometric property, associated with reflection symmetries of the Wulff shape, is preserved with the flow. Using this geometric property, we address global existence and regularity of the flow for smooth anisotropies. For the non-smooth case we establish global existence results for the types of anisotropies known to be globally well-posed.
\end{abstract}
\maketitle

\section{Introduction}
\label{sec:intro}
The motion of sets by crystalline curvature arises from physical applications such as crystal growth \cite{Cahn} or in statistical physics \cite{Spohn}, where sets evolve to decrease their anisotropic perimeter. We consider the volume-preserving version of such motions.  More precisely, we consider a flow of sets $(\Omega_t)_{t\geq0} $ moving with the outward normal velocity $V$ given by
\begin{align}
\label{model}
\tag{M}
V=\psi(\n)(-\k_\phi+\lam) \quad\hbox{ on } \partial \Om_t.
\end{align}
Here   $\k_\phi$ and $\n$  each denote the anisotropic mean curvature and the outward unit normal of $\partial \Omega_t$.   The forcing term $\lam = \lambda(t)$, coupled with the solution, is the Lagrange multiplier enforcing the volume constraint $|\Omega_t| = |\Omega_0|$.  The functions $\psi, \phi: \R^N\to [0, \infty)$ 
are positively one-homogeneous (see \eqref{eqn:one-homogeneous}), convex, positive away from the origin, and are respectively denoting mobility and anisotropy in the system. We will also require that they are symmetric with respect to a number of reflections: this will allow the solutions of \eqref{model} to preserve a related geometric property that is central to our analysis as we discuss below.

\medskip


The anisotropic curvature $\kappa_\phi$ is formally the first variation of the anisotropic perimeter functional 
\begin{align*}
\per_{\phi}(\Om) := \int_{\partial \Omega} \phi(\n) \,d\H^{N-1}.
\end{align*}
 If $\phi$ is $C^2$ away form the origin, one can verify that $\kappa_\phi = \dv_{\partial \Omega} D\phi(\n)$, where $\dv_{\partial \Omega}$ is the surface divergence of the so-called Cahn-Hoffman vector field $D\phi(\n)$. If no regularity except convexity is assumed for $\phi$ and if the graph of $\phi$ has corners, $\kappa_\phi$ is called a {\it crystalline } mean curvature. If $\phi$ is piece-wise linear so that its sub-level sets are convex polytopes, it is often called a {\it purely crystalline} anisotropy. We refer to \cite{T78,Taylor91,AngenentGurtin89,AT95} for further discussion of this problem in the variational setting. See Figure~\ref{fig:square-flow} for an example of a solution of \eqref{model}.

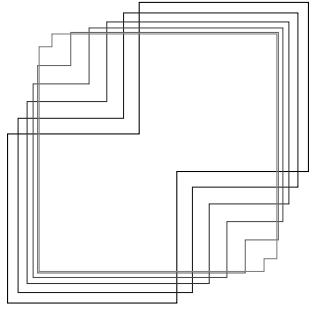
\begin{figure}
\centering
\begin{tikzpicture}
\draw[gray!0!black] (0.25,-0.25)--(2.0,-0.25)--(2.0,2.0)--(-0.25,2.0)--(-0.25,0.25)--(-2.0,0.25)--(-2.0,-2.0)--(0.25,-2.0) -- cycle;
\draw[gray!20!black] (0.458,-0.458)--(1.86,-0.458)--(1.86,1.86)--(-0.458,1.86)--(-0.458,0.458)--(-1.86,0.458)--(-1.86,-1.86)--(0.458,-1.86) -- cycle;
\draw[gray!40!black] (0.681,-0.681)--(1.74,-0.681)--(1.74,1.74)--(-0.681,1.74)--(-0.681,0.681)--(-1.74,0.681)--(-1.74,-1.74)--(0.681,-1.74) -- cycle;
\draw[gray!60!black] (0.916,-0.916)--(1.66,-0.916)--(1.66,1.66)--(-0.916,1.66)--(-0.916,0.916)--(-1.66,0.916)--(-1.66,-1.66)--(0.916,-1.66) -- cycle;
\draw[gray!80!black] (1.16,-1.16)--(1.6,-1.16)--(1.6,1.6)--(-1.16,1.6)--(-1.16,1.16)--(-1.6,1.16)--(-1.6,-1.6)--(1.16,-1.6) -- cycle;
\draw[gray!100!black] (1.41,-1.41)--(1.58,-1.41)--(1.58,1.58)--(-1.41,1.58)--(-1.41,1.41)--(-1.58,1.41)--(-1.58,-1.58)--(1.41,-1.58) -- cycle;
\end{tikzpicture}
\caption{Example of the volume-preserving flow in 2D with $\phi(p) = \psi(p) = \|p\|_1$ that converges to a rescaling of the square Wulff shape.}
\label{fig:square-flow}
\end{figure}

\medskip

Our goal in this paper is to establish a global existence for the flow \eqref{model} for sets starting from non-convex profiles. The difficulty lies in both the non-smoothness of $\phi$ and the low regularity of $\lambda$.  If the evolving surface is regular enough, then $\lambda$ can be explicitly written as the weighted average of the mean curvature over the surface,
\begin{align*}
\lambda(t) = \int_{\partial \Omega_t} \kappa_\phi \psi(\n) \,d\H^{N-1} / \int_{\partial \Omega_t} \psi(\n) \,d\H^{N-1},
\end{align*}
but there is no a priori regularity for $\lambda$ in general. In fact for non-convex Lipschitz domains, the forcing term $\lambda$ in \eqref{model} can be unbounded (see \cite[Example A.2]{KimKwo18b}).

\medskip

When $\phi$ is smooth and when the forcing $\lambda$ is a priori fixed as a bounded function in \eqref{model}, there have been both viscosity solution \cite{EvansSpruck,CCG91,Soner93,BarSonSou93} and variational approaches \cite{ATW93,AT95,LS95,cham04,cham08} to address the global well-posedness of the flow.

\medskip

It is well-known  that convexity is preserved in the flow \eqref{model}. The global-in-time existence of the convex flow, as well as exponential convergence to a rescaled version of the Wulff shape
\begin{align} 
\label{Wulff}
W_\phi := \{x \in \Rn: x \cdot p \leq \phi(p) \text{ for all }p \in \Rn\}
\end{align}
have been studied for both smooth \cite{Hui87, And01} and non-smooth $\phi$ \cite{BCCN09}. Beyond convex setting, there is no global-in-time existence result for \eqref{model} even for smooth anisotropies. To achieve this one likely needs to understand the pattern of topological changes that contributes to both the instability of the motion as well as the regularity of the forcing term $\lambda$. 
 In the isotropic case $\psi=\phi=|x|$, the global existence for \eqref{model} is proved under an energy convergence assumption that rules out abrupt topological changes: see \cite{LS95, MSS16, LaxSwa}. 
 
 \medskip
 
 Alternatively, one could also explore geometric conditions under which topological changes do not occur: this is the direction we pursue here. While it is suspected that star-shapedness is preserved in the evolution, it remains open to be proved even for the isotropic case.  For the isotropic case, \cite{KimKwo18a, KimKwo18b} introduced a stronger version of star-shapedness, called as a {\it reflection property}, motivated by \cite{FelKim14}.  Below we will introduce a geometric condition \eqref{eqn:1as} that naturally extends this property for anisotropic flows.

\medskip

To discuss the geometric property that \eqref{model} preserves, some notations are necessary. We represent the reflection symmetry of $\phi$ and $\psi$  in terms of the corresponding {\it root system} as follows. Let $\mathcal{P}$ be a finite {\it root system }(see \eqref{eqn:sym}) consisting of unit vectors in $\R^N$, with enough directions in the root system such that
\begin{align}
\label{as:ea}
\spn( \P \setminus \Pi ) = \Rn \hbox{ for any hyperplane $\Pi$ going through the origin}.
\end{align}

We say $\mathcal{P}$ is {\it compatible} with $\phi$ and $\psi$ if in addition $\phi$ and $\psi$ are invariant under reflection with respect to its elements, namely \eqref{eqn:rt}. The concept of the root system has been introduced in the context of reflection and Coxeter groups (see e.g. \cite{Hum90}). 
Examples of $\phi$ that allow a compatible root system are those whose Wulff shapes are convex regular polytopes in $\R^2$ and $\R^3$.

\medskip 

Given such a root system, we consider sets $\Omega$ for which there exists $\rho > 0$ such that 
\begin{equation}\label{volume}
|\Omega| > \K^N \rho^N|B_1(0)|,
\end{equation}
 satisfying the following reflection property:
\begin{equation}
\label{eqn:1as}
 \Psi_{\partial H} (\Omega) \cap H\subset \Omega \cap H \quad \hbox{ for any half-space $H \subset \Rn$ whose normal is in $\P$ and }  B_{\rho}(0) \subset H,
\end{equation}
where $\Psi_{\partial H}$ denotes the reflection operator with respect to the hyperplane $\partial H$; see Section~\ref{sec:geo}. Here the constant $\K = \K(\P) > 1$ is given in \eqref{eqn:k0}.  Note that the reflection property gets stronger as $\rho$ decreases since the family of eligible half-spaces grows: $W_{\phi}$ satisfies \eqref{eqn:1as} for $\rho=0$, making it the ideal shape for this property. 

\medskip

Our main observation is that the flow $\ott$ given by \eqref{model} preserves the property \eqref{volume}--\eqref{eqn:1as} for any root system satisfying \eqref{as:ea} compatible with $\phi$ and $\psi$. This geometric property in turn guarantees that $\Omega_t$ is a Lipschitz domain for each time (Theorem~\ref{thm:lpb}).  To incorporate $\lambda$ as a distribution, we will denote the coupled pair $(\ott, \Lambda)$ as a solution of \eqref{model}, where $\Lambda' = \lambda$ in the sense of distributions.

\begin{THM}
\label{THM:1}
Let $\phi\in C^2(\Rn \setminus \{0\})$, and let $\P$ be a root system compatible with $\phi$ and $\psi$. Then for any bounded open initial data $\Omega_0$ satisfying \emph{\eqref{volume}--\eqref{eqn:1as}}, there is a viscosity solution $(\ott,\Lam)$ of \eqref{model} starting from $\Omega_0$ that preserves volume and satisfies \eqref{eqn:1as} for all positive times.

Moreover, $\Lam\in C^{1/2}([0,\infty))$, and there exists a finite number of local neighborhoods $\{\mathcal{O}_i\}_{i=1}^n$ in $\Rn$ such that
\begin{itemize}
\item[(a)] $\bigcup_{i=1}^n \mathcal{O}_i \times [0,\infty)$  contains $\Gamma:= \bigcup_{t>0}(\partial\ot \times\{t\})$.
\item[(b)] For each  $i$,   $\Gamma$ restricted to $\mathcal{O}_i\times [0,\infty)$ can be represented as a graph of a function that is uniformly Lipschitz in space and uniformly H\"{o}lder continuous in time.
\end{itemize}
In addition, $\mathcal{O}_i$, the coordinates in $\mathcal{O}_i$ where the graph property holds, as well as their Lipschitz and H\"{o}lder constants, depend only on $\mathcal{P}$, the extremal values of $\phi$ and $\psi$ on $\S^{N-1}$, and $|\Omega_0|$.

\end{THM}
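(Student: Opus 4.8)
The plan is to build the solution via a minimizing-movements (ATW) scheme adapted to the volume constraint, using a truncated time step so that the volume is only approximately preserved at each stage, and then pass to the limit. First I would set up the discrete scheme: fix a time step $h>0$ and, given $\ot^{h}$, let $\ot[t+h]^{h}$ be a minimizer of the functional
\begin{align*}
E_h(\Omega) = \per_\phi(\Omega) + \frac{1}{h}\int_{\Omega \triangle \ot^h} \frac{\dist_\psi(x,\partial\ot^h)}{\psi(\cdot)}\,dx - \lambda^h_{t+h} |\Omega|,
\end{align*}
where $\lambda^h_{t+h}$ is chosen (by a bisection/continuity argument in $\lambda$) so that $|\ot[t+h]^h| = |\Omega_0|$. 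The key structural input is that the reflection property \eqref{eqn:1as}, together with \eqref{volume}, is preserved by each step of this scheme: this should follow from a reflection-comparison argument — if $\Omega$ violates \eqref{eqn:1as} for some admissible half-space $H$, then replacing $\Omega \cap H$ by $(\Omega \cap H) \cup \Psi_{\partial H}(\Omega \cap (\Rn \setminus H))$ strictly decreases $E_h$ (using the compatibility \eqref{eqn:rt} of $\P$ with $\phi$, $\psi$ so that $\per_\phi$ and the weighted distance term respect the reflection), while preserving volume, contradicting minimality. The lower volume bound \eqref{volume} is preserved because the reflection property forces $\Omega$ to contain a controlled cone structure around $B_\rho(0)$; the precise constant $\K$ in \eqref{eqn:k0} is exactly what makes the volume lower bound and \eqref{eqn:1as} mutually consistent along the flow.

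Next I would extract compactness. The reflection property gives, via Theorem \ref{thm:lpb}, a uniform Lipschitz bound on $\partial\ot^h$ — more precisely, near each point of the boundary one of finitely many coordinate systems (determined by $\P$ and the separating half-spaces through $B_\rho(0)$) exhibits $\partial\ot^h$ as a graph with a Lipschitz constant depending only on $\P$. This is the source of the neighborhoods $\mathcal{O}_i$ and the coordinate choices in (a)–(b). Using the standard density/perimeter estimates for the ATW scheme plus this uniform Lipschitz bound, I would pass $h\to 0$ to obtain a limit flow $\ott$ with $\partial\ot$ uniformly Lipschitz in space; the uniform spatial Lipschitz bound plus the discrete $L^\infty$-in-time displacement estimate ($\dist_\psi(\partial\ot^h,\partial\ot[t+h]^h) \lesssim$ curvature bound $\times\, h$) yields the Hölder-in-time modulus, giving (b). To identify the limit as a viscosity solution of \eqref{model} one invokes the consistency of the ATW scheme with the level-set/viscosity formulation for $C^2$ anisotropies (as in the references \cite{ATW93,AT95,cham04}), with the subtlety that $\lambda$ appears only as a distribution: one shows $\Lambda^h(t) = \int_0^t \lambda^h$ is uniformly bounded and equi-$C^{1/2}$, passes to a limit $\Lambda\in C^{1/2}$, and checks that the comparison with smooth test functions — whose volume is corrected by an $O(\text{test perturbation})$ amount of the evolving set — goes through with $\Lambda$ integrated against test functions in time.

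The uniform $C^{1/2}$ bound on $\Lambda^h$ is where I would spend the most care, and it is the main obstacle. The point is that $\lambda^h_{t+h}$ is comparable to a weighted average of $\kappa_\phi$ over $\partial\ot^h$, and while the forcing can be unbounded for a general Lipschitz domain (the cited \cite[Example A.2]{KimKwo18b}), the reflection property rules out the bad geometries: the worst-case curvature concentration is controlled because \eqref{eqn:1as} prevents inward cusps sharper than those allowed by the fixed finite family of half-spaces, and the volume bound \eqref{volume} prevents the set from degenerating. Quantitatively I would estimate $|\Lambda^h(t) - \Lambda^h(s)| = |\int_s^t \lambda^h|$ by summing the per-step energy dissipation: the ATW scheme gives $\sum_k h\,(\lambda^h_k)^2 |\partial\ot[kh]^h|_\psi \lesssim \per_\phi(\Omega_0)$-type bound via the Cauchy–Schwarz inequality applied to the dissipation inequality, so that $|\int_s^t\lambda^h| \le (t-s)^{1/2}(\sum h (\lambda^h)^2)^{1/2} \lesssim (t-s)^{1/2}$, with all constants depending only on $\P$, the extremal values of $\phi,\psi$ on $\S^{N-1}$, and $|\Omega_0|$. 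Making this dissipation estimate rigorous in the crystalline-adjacent but here $C^2$ setting, and ensuring the volume-correcting choice of $\lambda^h$ does not destroy the monotonicity of the ATW energy (it should only cost a controlled amount since the volume is exactly conserved), is the technical heart of the argument.
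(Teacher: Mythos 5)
Your proposal takes a genuinely different route from the paper (which never uses a minimizing-movements scheme: it runs a viscosity-solution scheme with the bang-bang forcing \eqref{eqn:lam} of size $\alpha h^{-1/2}$, preserves \eqref{eqn:1as} by the reflection comparison principle, shows $\big||E_t(h)|-|\Omega_0|\big|=O(\sqrt h)$ by Wulff-shape barriers, and gets $\Lambda^h\in C^{1/2}$ geometrically), but as written your scheme has several gaps that are not merely technical. First, the very first step --- choosing $\lambda^h_{t+h}$ by a ``bisection/continuity argument'' so that the minimizer has exactly volume $|\Omega_0|$ --- is unjustified: the volume of minimizers of $E_h$ is in general not a continuous function of the multiplier (minimizers can jump), which is precisely why volume-constrained ATW schemes either impose the constraint directly in the minimization (as in \cite{MSS16}) or, as in this paper, give up exact per-step conservation and only track the volume up to $O(\sqrt h)$. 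Second, the reflection-comparison step as you describe it does not preserve volume: replacing $\Omega\cap H$ by $(\Omega\cap H)\cup\Psi_{\partial H}(\Omega\cap H^{\mathsf c})$ only adds mass. The correct competitor is the polarization (two-point rearrangement) across $\partial H$, and even then one must verify that the dissipation term $\frac1h\int_{\Omega\triangle\Omega_t^h}\operatorname{dist}$ does not increase (this needs the previous set to satisfy \eqref{eqn:1as} so its distance function is suitably ordered under reflection) and that the decrease is strict, or else select among minimizers; none of this is supplied.

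Third, the step you yourself flag as the heart of the argument does not close as stated: the ATW dissipation inequality controls $\sum_k \frac1h\int_{\Omega_{k+1}\triangle\Omega_k}\operatorname{dist}$, not $\sum_k h(\lambda^h_k)^2$, so the bound $|\int_s^t\lambda^h|\lesssim|t-s|^{1/2}$ does not follow from Cauchy--Schwarz alone; an $L^2$-in-time bound on the multipliers requires a separate construction of volume-adjusting competitors (as in \cite{MSS16}) and it is exactly the delicate point in the non-convex setting, since $\lambda$ can be unbounded for Lipschitz domains (\cite[Example A.2]{KimKwo18b}). The paper avoids this entirely: $\lambda^h$ is prescribed to be $\pm\alpha h^{-1/2}$, and the H\"older bound on $\Lambda^h$ is proved by a barrier-plus-density argument (Proposition~\ref{prop:hol}, using \cite{Kra16}) that exploits \eqref{eqn:1as} and the interior cones. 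Finally, identifying the ATW limit as a viscosity solution in the sense actually used in the theorem (Definition~\ref{def:visl}, built on Wulff sup/inf-convolutions precisely to make sense of a forcing that exists only as $\Lambda'$ with $\Lambda\in C^{1/2}$) is asserted by appeal to consistency results that are known for fixed bounded forcing, not for distributional forcing; some substitute for the paper's stability theorem (Theorem~\ref{thm:sta}) together with the no-fattening argument via the uniform cone property would have to be developed for your scheme. Until these four points are addressed, the proposal does not constitute a proof.
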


 See the end of Section~\ref{sec:gexi} for the proof of this Theorem.

\medskip

Additional challenges arise when $\phi$ is non-smooth. In this case the optimal regularity for the evolving set is only Lipschitz and $\kappa_\phi$ is understood as nonlocal. The flow develops flat features like faces and edges (see Figure~\ref{fig:square-flow}) that might break or bend during the evolution \cite{BNP99}. The well-posedness of the crystalline mean curvature flow has only recently been established, respectively with the level set formulation \cite{GigaGiga98, GigPoz20} and with variational approach that directly addresses the motion of the sets \cite{CMNP19, CMP17}; see Section~\ref{sec:global-existence} for further discussions.  The next theorem states that our results hold for the class of non-smooth anisotropies that were successfully addressed with these approaches.

\begin{THM}
\label{THM:2}  
If $\phi$ is {\it purely crystalline}, then the statements in Theorem~\ref{THM:1} hold as long as there is {\it no fattening} in the process of smooth approximations (see Section~\ref{sec:general-mobility}).

If $\psi$ is {\it $\phi$-regular}, then the statements in Theorem~\ref{THM:1} hold for a  {\it flow} $(\ott,\Lam)$ of \eqref{model} in the sense of  \cite{CMP17} (see Section~\ref{se:phi-regular-mobility}).
\end{THM}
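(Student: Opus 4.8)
The plan is to derive both statements from Theorem~\ref{THM:1} by a reflection-symmetric smooth approximation of the anisotropy, exploiting that the quantitative conclusions of Theorem~\ref{THM:1} are uniform along such an approximation since they depend only on $\P$, the extremal values of $\phi$ and $\psi$ on $\S^{N-1}$, and $|\Omega_0|$. Concretely, I would construct $\phi_k\in C^2(\Rn\setminus\{0\})$, positively one-homogeneous, convex and positive away from the origin, with $\phi_k\to\phi$ uniformly on $\S^{N-1}$ and each $\phi_k$ still invariant under the reflections in $\P$; this is obtained by mollifying $\phi$ and then averaging over the finite reflection group generated by $\P$, which preserves \eqref{eqn:rt}, so $\P$ remains compatible with $(\phi_k,\psi)$. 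The extremal values of $\phi_k$ converge to those of $\phi$ and may be taken uniformly bounded above and below. Theorem~\ref{THM:1} applied with $\phi_k$ then yields viscosity solutions $(\Omega^k_t,\Lambda^k)$ from $\Omega_0$ that preserve volume, satisfy \eqref{volume}--\eqref{eqn:1as} for all $t>0$, and admit the local graph representation of Theorem~\ref{THM:1} with the neighborhoods $\mathcal{O}_i$, the coordinates, the Lipschitz and H\"older constants, and the $C^{1/2}$ norm of $\Lambda^k$ all independent of $k$.

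\textbf{Compactness and identification of the limit.} By Arzel\`a--Ascoli, a subsequence of $\Lambda^k$ converges uniformly on compact time intervals to some $\Lambda\in C^{1/2}([0,\infty))$; by the uniform space-time graph bounds, the corresponding $\Omega^k_t$ converge, locally uniformly in $t$ in the Hausdorff sense on the boundaries, to a flow $\Omega_t$ that still preserves volume, still satisfies \eqref{volume}--\eqref{eqn:1as} (the reflection inclusions are closed under Hausdorff limits), and inherits the same local graph structure, so Theorem~\ref{thm:lpb} continues to apply. It remains to identify $(\Omega_t,\Lambda)$ as a solution of \eqref{model} for the non-smooth $\phi$: since $\lambda^k=(\Lambda^k)'\to\lambda=\Lambda'$ in the sense of distributions, in the purely crystalline case this is precisely the stability of the crystalline mean curvature flow under smooth approximation of the anisotropy in the level-set/viscosity framework of \cite{GigaGiga98, GigPoz20}, where the \emph{no-fattening} hypothesis is what rules out convergence to a fattened region; in the $\phi$-regular case, the analogous stability and uniqueness are supplied by \cite{CMP17} (with the volume Lagrange multiplier entering as a spatially constant forcing), so the limit is a flow of \eqref{model} in that sense and no separate non-fattening assumption is needed.

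\textbf{Transfer of regularity and the main obstacle.} The assertions $\Lambda\in C^{1/2}([0,\infty))$ and the uniform local graph structure for the limiting flow are then immediate, since the corresponding steps in the proof of Theorem~\ref{THM:1} use only \eqref{eqn:1as}, the weak formulation of \eqref{model}, and the extremal values of $\phi,\psi$ together with $|\Omega_0|$ --- none of which requires smoothness of $\phi$. The delicate points are case-specific. For purely crystalline $\phi$, passing the geometric and regularity estimates to the limit is soft, but concluding that the limit is a genuine crystalline solution unaffected by fattening requires both the no-fattening hypothesis and the fact that the low-regularity forcing does not destroy the stability of the level-set flow; here the uniform $C^{1/2}$ bound on $\Lambda^k$ (equivalently, control of $\lambda^k$ as a distribution with a uniformly H\"older primitive) is essential. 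For $\phi$-regular $\psi$, the obstacle is instead to verify that \eqref{eqn:1as} is preserved within the variational framework of \cite{CMP17}: for a half-space $H$ with normal in $\P$ and $B_{\rho}(0)\subset H$, the reflected flow $\Psi_{\partial H}(\Omega_t)$ solves \eqref{model} with the same spatially constant multiplier $\lambda(t)$ and agrees with $\Omega_t$ on $\partial H$, so \eqref{eqn:1as} should follow from the same comparison-with-fixed-forcing argument inside $H$ used in the proof of Theorem~\ref{THM:1}; making this rigorous requires the comparison principle for the forced flow available for $\phi$-regular anisotropies by \cite{CMP17} and a check that the rough forcing does not obstruct it, and the safest route is to carry out this comparison at the smooth level and only then pass it to the limit.
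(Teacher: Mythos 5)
Your overall strategy (reflection-symmetric smoothing $\phi_n\to\phi$, uniform estimates from Theorem~\ref{THM:1}, Arzel\`a--Ascoli compactness for $\Lambda^k$ and the boundaries) matches the paper's starting point, but the step where you identify the limit as a solution for non-smooth $\phi$ is cited as a black box, and in both cases the cited results do not apply in the form you need; this is exactly the content of Section~\ref{sec:global-existence}. For purely crystalline $\phi$, the stability theory of \cite{GigPoz20} is only available for limits of \emph{continuous} level-set solutions with smooth anisotropy; it does not apply to limits of the set flows $(\Omega^k_t,\Lambda^k)$ produced by Theorem~\ref{THM:1} (whose level-set representatives are characteristic functions obtained after the $h\to0$ limit). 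This is why the paper re-runs the discrete scheme of Section~\ref{sec:lam} with $h=1/n$ coupled to $\phi_n$ and \emph{without} re-initializing the signed distance at each step, so as to obtain continuous solutions $u_n$ of the level-set PDE, and then proves Proposition~\ref{pr:crystalline-half-limits} (half-relaxed limits are sub/supersolutions, via the uniformly elliptic approximation $\phi_{n,\delta}$ and the perturbed test function method) and Lemma~\ref{le:crystalline-half-lim-conincidence} (explicit Wulff-type barriers $v_n$ at $t=0$ plus comparison to get $\overline u=\underline u$). Your proposal also does not address how volume preservation survives the limit: in the paper this is precisely where the no-fattening hypothesis enters, through the auxiliary distance functions $d_n$ and the sandwich $\{u>0\}\subset\{d>0\}\subset\{d\geq0\}\subset\{u\geq0\}$ of Lemma~\ref{le:crystal-dist-u-relation} and Theorem~\ref{th:crystalline-existence-no-fat}; saying that no fattening ``rules out convergence to a fattened region'' does not substitute for this argument.

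For the $\phi$-regular case the gap is of the same nature. You invoke ``stability and uniqueness supplied by \cite{CMP17}'' with the Lagrange multiplier as a constant-in-space forcing, but that framework (and \cite{chambolle_AP}) requires forcing in $L^\infty_{loc}$, whereas here the forcing is only the distributional derivative of $\Lambda\in C^{1/2}$; the paper therefore cannot land directly in the CMP17 class and instead proves the extended distributional characterization of Theorem~\ref{sec7.1_main} (existence of $z\in\partial\phi(Dd)$, the inequality \eqref{level_set_pde} written with the antiderivative $\Lambda$, and the curvature bound \eqref{upper_bound}), recovering a solution literally in the sense of \cite{CMP17} only when $\Lambda$ is Lipschitz. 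Carrying this out requires choosing smooth $\theta_n\leq\Lambda_n$, applying Lemma~2.6 of \cite{chambolle_AP} to the smooth flows, passing weak-$*$ limits of $z_n$, and, crucially, approximating the mobility as $\psi_n:=\eta+\e_0\phi_n$ so that $\psi_n$ is $\phi_n$-regular with constants uniform in $n$ (keeping $\psi$ fixed, as you do, loses the uniform bound $\dv z_n\leq (N-1)/(\e_0 d_n)$). Finally, your stated ``main obstacle'' for this case --- preserving \eqref{eqn:1as} inside the CMP17 framework --- is not where the difficulty lies: the geometry is inherited from the smooth approximations exactly as you suggest in your last sentence; the real work is verifying the variational solution property of the limit under the rough forcing, which your proposal leaves unaddressed.
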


\begin{remark} $\,$
\begin{itemize} 
\item[1.] Uniqueness of the flow \eqref{model} remains open, even in the isotropic case.
\item[2.]  Both conditions \eqref{volume} and \eqref{eqn:1as} are needed to ensure that $\ot$ is a Lipschitz domain. For instance, if $\P = \left\{ \pm e_i : 1 \leq i \leq N \right\}$, then we may have a domain with cusps satisfying \eqref{eqn:1as}. See also Remark~\ref{rem:pri} and Example~\ref{ex:sq}.
\item[3.] For smooth anisotropies, we expect that an approach similar to \cite[Section~5]{KimKwo18b} will lead to the asymptotic convergence of the flow to the Wulff shape. We do not pursue it here since the proof would at least require significant regularity analysis that deviates from the main topic of  the paper. For non-smooth anisotropies the asymptotic convergence to a Wulff shape remains open. 
\end{itemize}
\end{remark}

We now give two examples of sets that satisfy our geometric assumptions but that are not convex.
\begin{ex}
\label{ex:two-cubes}
Let
\begin{align*}
\phi(\xi) = \psi(\xi) := \|\xi\|_\infty := \max\{ | \xi \cdot e_i | : 1 \leq i \leq N \} 
\end{align*}
where $\{e_i\}_{i =1}^N$ is the standard basis of $\Rn$, and let $\P$ be given as 
\begin{align}
\label{eqn:sq}
\P = \left\{ \pm e_i : 1 \leq i \leq N \right\} \cup \{ \pm\tfrac{1}{\sqrt{2}}(e_i + e_j), \pm\tfrac{1}{\sqrt{2}}(e_i - e_j) : 1 \leq i < j \leq N \},
\end{align}
see Figure~\ref{fig:ex1}(a).
Then $\P$ satisfies \eqref{as:ea}, and the following union of two cubes with sufficiently large $C>0$
\begin{align*}
\Om_0:= \left([-C,C]^N - e_1 \right) \cup \left([-C-1,C+1]^N + e_1 \right)
\end{align*}
satisfies \eqref{eqn:1as} for $\rho=1$; see Figure~\ref{fig:ex1}(b).
\end{ex}
\begin{figure}
\begin{tikzpicture}
\begin{scope}
\foreach \t in {0, 45, ..., 315} {
\draw[->] (0,0) -- (\t:1);
}
\draw (0,-2.) node {(a)};
\draw (30:1.2) node {$\P$};
\end{scope}
\begin{scope}[scale=0.2,xshift=30cm]
\draw[->] (-8,0) -- (8,0);
\draw[->] (0, -8) -- (0,8);
\draw[thick] (-5,6) -- (7,6) node[above right] {$\Omega_0$} -- (7,-6) -- (-5,-6) -- (-5,-5) -- (-6,-5)
-- (-6,5) -- (-5,5) -- cycle;
\draw (0,0) circle [radius=1];
\draw (1,0) node[above right=-2pt] {$B_1(0)$};
\def\x{0.707}
\draw[very thin,dashed] (-6.5+\x,6.5+\x) -- (6.5+\x,-6.5+\x) 
(-6.5-\x,6.5-\x) -- (6.5-\x,-6.5-\x); 
\draw (0,-10) node {(b)};
\end{scope}
\end{tikzpicture}
\centering
\caption{The root system $\P$ and $\Omega_0$ in Example~\ref{ex:two-cubes} for $N = 2$.}
\label{fig:ex1}
\end{figure}

%

\begin{ex}
\label{ex:star}
In this example we consider $\phi$ and $\psi$ with a triangular symmetry.
For $N=2$, let
\begin{align*}
\phi(\xi) = \psi(\xi) := \max\{ \xi \cdot \eta_1, \xi \cdot \eta_2, \xi \cdot \eta_3 \} 
\end{align*}
where $\eta_1 = (\frac{\sqrt{3}}{2},\frac{1}{2})$, $\eta_2 = (-\frac{\sqrt{3}}{2},\frac{1}{2})$ and $\eta_3 = (0, -1)$. 
Then, $\phi$ and $\psi$ are invariant with respect to reflections given by elements of $\P := \left\{ \pm\left(\frac{1}{2}, -\frac{\sqrt{3}}{2}\right), \pm\left(\frac{1}{2}, \frac{\sqrt{3}}{2}\right), \pm(1,0) \right\}$ as in \eqref{eqn:rt}. In addition, $\P $ satisfies \eqref{as:ea}. In this case, the two equilateral triangles
\begin{align*}
\{ \xi : \phi(\xi) \leq 1\} \hbox{ and } \{ \xi : \phi(-\xi) \leq 1\},
\end{align*}
satisfy \eqref{eqn:1as} for all $\rho>0$. In particular, their union $\Omega_0$ also satisfies \eqref{eqn:1as} for all $\rho>0$; see Figure~\ref{fig:ex2}(b).
\end{ex}
\begin{figure}
\begin{tikzpicture}
\begin{scope}
\foreach \t in {0, 60, ..., 300} {
\draw[->] (0,0) -- (\t:1);
}
\draw (0,-2.) node {(a)};
\draw (30:1.2) node {$\P$};
\end{scope}
\begin{scope}[xshift=6cm]
\draw[->] (-1.5,0) -- (1.5,0);
\draw[->] (0, -1.5) -- (0,1.5);
\draw[thick] (0:0.5774) -- (30:1)
-- (60:0.5774) node[above right] {$\Omega_0$} -- (90:1)
-- (120:0.5774) -- (150:1)
-- (180:0.5774) -- (210:1)
-- (240:0.5774) -- (270:1)
-- (300:0.5774) -- (330:1)
-- cycle;
\draw (0, -2) node {(b)};
\end{scope}
\end{tikzpicture}
\centering
\caption{The root system $\P$ and $\Omega_0$ in Example~\ref{ex:star} for $N = 2$.}
\label{fig:ex2}
\end{figure}
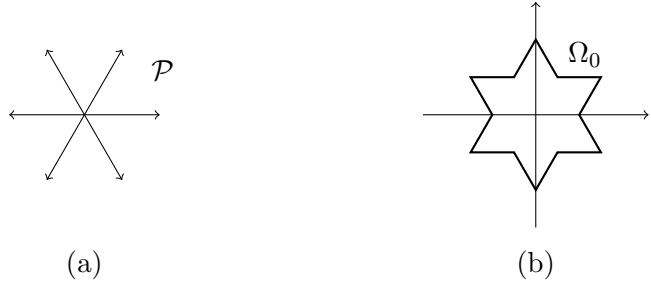

\medskip

\textbf{Outline of the paper.}
In Section~\ref{sec:geo} we study geometric properties of sets that satisfy \eqref{eqn:1as}. In Section~\ref{sec:geo1} we show that such sets are Lipschitz domains, by establishing interior and exterior cone properties at the boundary points. Since the reflection property is limited  to the directions in $\P$, the argument for this step is considerably more subtle than the one in \cite{KimKwo18a, KimKwo18b}.  Section~\ref{sec:geo2} provides a lower bound on in-radius of the sets satisfying \eqref{eqn:1as} in terms of its diameter.

In Section~\ref{sec:not}, we introduce a notion of viscosity solutions for \eqref{model}. While interested in the geometry of sets, we will adopt the level set approach, since it allows flexibility in perturbation arguments in our analysis.  We extend the notions developed in  \cite{KimKwo18b} to accommodate $\lambda$ that is only a distributional derivative of a continuous function. This is necessary due to the unknown regularity of the volume-preserving $\lambda$.

Sections~\ref{sec:pre}, \ref{sec:lam} and~\ref{sec:gexi} concern curvature flows with smooth anisotropy $\phi$. In Section~\ref{sec:pre} we show the preservation of the reflection property \eqref{eqn:1as} for level sets of viscosity solutions with fixed forcing. In Section~\ref{sec:lam}, a discrete-time scheme is introduced to approximate \eqref{model} with flows with piece-wise constant forcing. Due to the results from the previous sections, one can show that the discrete solutions have locally Lipschitz interfaces and thus a fattening phenomenon does not occur in their limit. As a consequence, we  prove Theorem~\ref{THM:1} in Section~\ref{sec:gexi}.

Lastly in Section~\ref{sec:global-existence} we address \eqref{model} with non-smooth $\phi$ and prove Theorem~\ref{THM:2}. Here the global well-posedness of \eqref{model} is established in terms of available notions of crystalline flow.

\section{Root system and geometric properties}
\label{sec:geo}

In this section we study geometry of sets that satisfy \eqref{eqn:1as}. For a unit vector $p\in \Rn$ we define the hyperplane 
$$
\Pi_{p}(s) : = \{ x \in \Rn : x \cdot p = s \},
$$ which divides $\Rn$ into the half-spaces 
$$\Pi^+_{p}(s) : = \{ x \in \Rn : x \cdot p > s \} \hbox{  and  } \Pi^-_{p}(s) : = \{ x \in \Rn : x \cdot p < s \}.
$$ Then the corresponding reflection map and the reflection property \eqref{eqn:1as} can be written respectively as  $\Psi_{\Pi_p(s)}(x)= x- 2  (x \cdot p - s) p$
and  
\begin{align}
\label{eqn:rp}
\Psi_{\Pi_p(s)} (\Om) \cap \Pi^-_{p}(s) \subset \Om \cap \Pi^-_{p}(s) \quad \hbox{ for all $p \in \P$ and $s  >\rho$.}
\end{align}

\medskip

Let us next review the notion of root systems, which are used to describe reflection symmetries of objects \cite[Section~1.2]{Hum90}.  A {\it root system} $\P$ in $\Rn$ is a set of nonzero vectors in $\Rn$ satisfying 
\begin{align}
\label{eqn:sym}
\P \cap (\R p) = \{p, -p\} \hbox{ and } \Psi_p \P = \P \hbox{ for all } p \in \P,
\end{align}
where $\Psi_p:= \Psi_{\Pi_p(0)} = I - 2\frac{p \otimes p}{|p|^2}$.  In this paper we will only consider finite root systems $\P \subset \S^{N-1}$ that satisfy \eqref{as:ea}. 

\subsection{Interior and exterior cones} 
\label{sec:geo1}
We will show that sets satisfying \eqref{eqn:1as} are Lipschitz domains. To this end we define cones of directions. For $r > 0$ and a basis $A = \{ p_i \}_{i=1}^N \subset \S^{N-1}$ of $\Rn$ we write $\co_{r}(A)$ to denote the open {\it $r$-cone} generated by $A$:
\begin{align}
\label{eqn:par}
\co_r(\{p_i\}_{i=1}^N) = \left\{ \sum_{i=1}^N a_i p_i : \sum_{i=1}^N a_i < r \hbox{ and } a_i > 0 \hbox{ for all } 1 \leq i \leq N  \right\}.
\end{align}
That is, $\co_{r}(A)$ is the open $N$-simplex given by the convex hull $\operatorname{co} (rA \cup \{0\})$.

\medskip

For the rest of this paper, $\sigma_1$, $\sigma_2$ and $\sigma_3$ denote the following constants that characterize the distribution of the directions in $\P$:
\begin{align}
\label{eqn:s1}
\sigma_1 &= \sigma_1(\P) := \min_{p_1 \in \P} \max_{ \hbox{ a basis } \{p_i\}_{i=1}^N \subset \P} \min_{1 \leq i \leq N} |p_i \cdot p_1|,\\
\label{eqn:s2} 
\sigma_2 &= \sigma_2(\P) := \max_{x\in D} |x|,  \quad  D:=  \{x\in\Rn, \ | p \cdot x| \leq 1 \hbox{ for all } p \in \P \},\\
\label{eqn:s3}
\sigma_3 &= \sigma_3(\P) :=  \min_{\hbox{a basis } A \subset \P} \max \{r : B_r(\tfrac{1}{2N} \textstyle\sum_{p\in A}p) \subset \co_1(A)\}.
\end{align}

\begin{lem}
\label{lem:rv}
Let $\P \subset \S^{N-1}$ be a finite root system satisfying \eqref{as:ea}. Then $\sigma_1(\P) , \sigma_3(\mathcal{P}) \in (0, 1]$ and $\sigma_2(\P) \in [1, \infty)$.\end{lem}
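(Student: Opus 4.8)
The plan is to prove the three assertions independently, since each is an elementary estimate; the only structural input is the spanning hypothesis \eqref{as:ea}, and it enters at exactly one point. I would first record the standing fact that \eqref{as:ea} forces $\spn\P=\Rn$ (take any hyperplane $\Pi$ and use $\spn(\P\setminus\Pi)\subseteq\spn\P$), so $\P$ contains a basis of $\Rn$; since $\P$ is finite there are only finitely many such bases, and hence all the outer $\min$'s and $\max$'s in \eqref{eqn:s1}, \eqref{eqn:s3} range over nonempty finite index sets, which is what makes "a minimum of strictly positive numbers is strictly positive" applicable.

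For $\sigma_1$: the upper bound $\sigma_1\le 1$ is immediate from Cauchy--Schwarz, since $p_i,p_1\in\S^{N-1}$ gives $|p_i\cdot p_1|\le 1$, hence $\min_i|p_i\cdot p_1|\le1$ for every admissible basis. For the lower bound, fix $p_1\in\P$ and apply \eqref{as:ea} with $\Pi=p_1^{\perp}=\{x:x\cdot p_1=0\}$; this yields $\spn(\P\setminus p_1^{\perp})=\Rn$, so one can extract from $\P\setminus p_1^{\perp}$ a basis $\{p_i\}_{i=1}^N$ of $\Rn$, and by construction $|p_i\cdot p_1|>0$ for every $i$. Thus for each $p_1$ the inner $\max$ over bases is strictly positive, and minimizing over the finite set $\P$ keeps it strictly positive. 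For $\sigma_2$: any $p_0\in\P$ is a unit vector lying in $D$ (again $|p\cdot p_0|\le1$ for all $p\in\P$), so $\sigma_2\ge|p_0|=1$; and fixing a basis $\{q_j\}_{j=1}^N\subset\P$, the linear isomorphism $x\mapsto(q_1\cdot x,\dots,q_N\cdot x)$ maps $D$ into $[-1,1]^N$, so $D$ is bounded, while being an intersection of closed slabs it is closed, hence compact, and $\sigma_2=\max_{x\in D}|x|<\infty$.

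For $\sigma_3$: for a basis $A=\{p_i\}_{i=1}^N\subset\P$, the point $c_A:=\frac{1}{2N}\sum_{p\in A}p$ has barycentric coordinates $a_i\equiv\frac{1}{2N}>0$ with $\sum_i a_i=\frac12<1$, so by \eqref{eqn:par} it lies in $\co_1(A)$; since $A$ is a basis, $\co_1(A)$ is the image of an open set in $\Rn$ under a linear isomorphism, hence open in $\Rn$, so there is $r>0$ with $B_r(c_A)\subset\co_1(A)$, i.e.\ the quantity $\max\{r:B_r(c_A)\subset\co_1(A)\}$ is strictly positive for every basis $A$, and minimizing over the finitely many bases keeps it positive. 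Conversely, every $x=\sum_i a_ip_i\in\co_1(A)$ satisfies $|x|\le\sum_i a_i|p_i|=\sum_i a_i<1$, so $\co_1(A)\subset B_1(0)$, a set of diameter at most $2$; any ball contained in it therefore has radius at most $1$, giving $\sigma_3\le1$.

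I do not expect a real obstacle here: the lemma is a bookkeeping statement, and the only step carrying genuine content is the lower bound on $\sigma_1$, where \eqref{as:ea} is used through the hyperplane $p_1^{\perp}$; everything else holds for an arbitrary finite, centrally symmetric set of unit vectors spanning $\Rn$. The one point to be a little careful about is justifying that $\co_1(A)$ is open in $\Rn$ (so that an admissible radius exists), which follows because $A$ being a basis makes the simplex full-dimensional.
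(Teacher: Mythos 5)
Your proof is correct. For $\sigma_1$ and $\sigma_3$ you follow essentially the paper's route: the lower bound on $\sigma_1$ uses \eqref{as:ea} with $\Pi=p_1^{\perp}$ exactly as in the paper (the paper additionally remarks that one may take $p_1\in A$, since the inner maximum in \eqref{eqn:s1} is intended over bases whose first element is the fixed $p_1$; your argument covers this with no change, because exchanging one element of your basis for $p_1$ keeps it inside $\P\setminus p_1^{\perp}$ and $|p_1\cdot p_1|=1$), and your treatment of $\sigma_3$ is the paper's one-line observation about $\tfrac{1}{2N}\sum_{p\in A}p\in\co_1(A)$, spelled out with the openness of the full-dimensional simplex and the inclusion $\co_1(A)\subset B_1(0)$. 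Where you genuinely diverge is $\sigma_2$: the paper shows $\sup_{x\in D}|x|<\infty$ by a sequential contradiction argument, normalizing a maximizing sequence, extracting a limit direction $y\in\S^{N-1}$, and observing that $K=\infty$ would force $p\cdot y=0$ for all $p\in\P$, contradicting the spanning property; you instead pick a basis $\{q_j\}\subset\P$ and note that $x\mapsto(q_1\cdot x,\dots,q_N\cdot x)$ is a linear isomorphism sending $D$ into $[-1,1]^N$, so $D$ is bounded, closed, hence compact. Your version is more elementary (no subsequence extraction) and has the small additional benefit of justifying that the maximum in \eqref{eqn:s2} is actually attained; both arguments use only $\spn\P=\Rn$, the full strength of \eqref{as:ea} being needed only for $\sigma_1$, as you note.
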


\begin{proof}
It is clear that $\sigma_1 \leq 1$ as $\P \subset \S^{N-1}$. To show the lower bound, fix $p_1 \in \P$. By \eqref{as:ea}, we can find a basis $A \subset \P \setminus p_1^\perp$, and we can assume that $p_1 \in A$. But then $\min_{p \in A} |p \cdot p_1| > 0$. Since $\P$ is finite, we conclude that $\sigma_1 > 0$.
It is clear that $0<\sigma_3 <1$ since $\tfrac{1}{2N} \textstyle\sum_{p\in A}p \in \co_1(A)$ for any basis $A \subset \P$.

\medskip

To estimate $\sigma_2$ we set $K := \sup_{x\in D} |x|$. As $\P \subset \S^{N-1}$ we have $K \geq 1$. Let $\{x_i\}_{i \in \N}$ satisfy $|x_i| \to K$ with $|x_i| \geq 1$. As $\{x_i/|x_i|\} \subset \S^{N-1}$, along a subsequence  $x_i / |x_i| \to y \in \S^{N-1}$.
As $|p\cdot x_i| \leq 1$ for all $p \in \P$, we have
\begin{align*}
\left| p \cdot \frac{x_i}{|x_i|} \right| \leq \frac{1}{|x_i|}.
\end{align*}
Thus if $K = \infty$ it follows that $p \cdot y = 0$ for all $p \in \P$ with $y\in \S^{N-1}$, but this
contradicts \eqref{as:ea}. Hence we conclude that $1 \leq \sigma_2 = K < \infty$. 
\end{proof}

Now we are ready to state the main result in this section. 

\begin{thm}
\label{thm:lpb}
Suppose that $\Om$ satisfies \eqref{eqn:1as} and contains 
\begin{align}
\label{eqn:b}
\B_r := \sigma_1^{-1}\sigma_2 (\rho + 2r) B_1(0)
\end{align}
for some $r > 0$. Then  $\partial\Omega$ has the $r$-cone  property at every point, with locally constant cone directions that are independent of the choice of $\Om$.

More precisely, for every $x_0 \in \B_r^\cc$ there exists $A \subset \P$ that only depends on $x_0$ such that $A$ is a basis of $\R^N$ and
\begin{align}
\label{eqn:lpb12}
x + \co_{r}(A) \subset \Om^\mathsf{c} \hbox{ and } y - \co_{r}(A) \subset \Om
\end{align}
for any $x \in \Omega^\cc \cap B_r(x_0)$ and $y \in \Omega \cap B_r(x_0)$.
In particular, $\Om$ is a Lipschitz domain. 
\end{thm}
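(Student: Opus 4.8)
The plan is to exploit the reflection property \eqref{eqn:1as}, equivalently \eqref{eqn:rp}, to produce at each boundary point a full-dimensional cone that lies inside (resp.\ outside) $\Omega$. The key mechanism: if $x_0 \notin \B_r$, then by the definitions \eqref{eqn:s1}--\eqref{eqn:s2} of $\sigma_1,\sigma_2$ one can select a basis $A = \{p_i\}_{i=1}^N \subset \P$ all of whose elements have a definite-sized inner product with the direction $x_0/|x_0|$ — concretely $|p_i \cdot x_0| \geq \sigma_1 |x_0|/\sigma_2 \cdot(\text{const})$ so that the half-spaces $\Pi_{p_i}^{-}(s_i)$ with appropriate $s_i > \rho$ all contain $B_\rho(0)$ (hence are admissible in \eqref{eqn:rp}) while having $x_0$ essentially on their boundary. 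Reflecting $\Omega$ across each hyperplane $\Pi_{p_i}(s_i)$ and intersecting the inclusions, one learns that near $x_0$ the set $\Omega$ ``opens toward the origin'': a point $y \in \Omega$ close to $x_0$ forces, via the $N$ reflections, the translated simplex $y - \co_r(A)$ to lie in $\Omega$, and symmetrically a point $x \notin \Omega$ forces $x + \co_r(A) \subset \Omega^{\mathsf c}$. The radius $r$ in \eqref{eqn:b} is exactly what is needed so that the reflected simplices stay within the region where the reflections are controlled.

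First I would fix $x_0 \in \B_r^{\mathsf c}$ and, using \eqref{as:ea} together with the extremal characterization in \eqref{eqn:s1}, choose the basis $A = \{p_i\}\subset\P$ realizing the max-min, oriented so that $p_i \cdot x_0 > 0$ for all $i$; this $A$ depends only on the direction of $x_0$ (indeed only on which ``chamber'' of $\P$ it lies in), which gives the claimed local constancy. Second, for each $i$ I would set the reflection level $s_i := p_i \cdot x_0$ minus a controlled slack, check that $s_i > \rho$ using $|x_0| > \sigma_1^{-1}\sigma_2(\rho+2r)$ and $|p_i\cdot x_0|\geq \sigma_1|x_0|$, and record that the corresponding half-space contains $B_\rho(0)$ so \eqref{eqn:rp} applies. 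Third — the geometric heart — I would show that the intersection over $i$ of the sets $\Psi_{\Pi_{p_i}(s_i)}^{-1}(\Omega \cap \Pi_{p_i}^-(s_i)) \cup \Pi_{p_i}^+(s_i)$ contains the simplex $y - \co_r(A)$ whenever $y \in \Omega \cap B_r(x_0)$: each reflection maps the part of $y - \co_r(A)$ sticking out past $\Pi_{p_i}(s_i)$ back into a region already known to be in $\Omega$, and the estimate $\co_r(A)\subset B_{r}(0)$ combined with \eqref{eqn:b} keeps everything in range. The exterior statement follows by the same argument applied to $\Omega^{\mathsf c}$, or by the contrapositive of \eqref{eqn:rp}. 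Finally, having both inclusions in \eqref{eqn:lpb12} with a fixed cone, the Lipschitz-domain conclusion is the standard fact that a uniform two-sided cone condition with locally constant axis is equivalent to the boundary being locally a Lipschitz graph (in coordinates adapted to $A$, e.g.\ along $-\sum_i p_i$); the constant $\sigma_3$ from \eqref{eqn:s3} quantifies the aperture and hence the Lipschitz constant.

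The main obstacle I expect is Step three: carefully tracking how a simplex $y - \co_r(A)$ behaves under the $N$ successive (non-commuting) reflections and verifying that the ``overhang'' past each $\Pi_{p_i}(s_i)$ lands in the correct half-space and inside the ball $\B_r$ where the hypothesis gives information. This requires quantitative control of $\dist(x_0, \Pi_{p_i}(s_i))$ from below (to absorb the diameter $r$ of the simplex) and from above (so that the reflected simplex is not pushed outside $B_r(x_0)$-controlled region), which is precisely where the somewhat unusual constants $\sigma_1^{-1}\sigma_2$ and the factor $\rho+2r$ in \eqref{eqn:b} are calibrated; getting these inequalities to close simultaneously for all $i$ is the delicate bookkeeping. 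A secondary subtlety, flagged in the paper itself, is that since only directions in $\P$ are available — not all directions as in the isotropic work \cite{KimKwo18a,KimKwo18b} — one must genuinely use \eqref{as:ea} to guarantee that a non-degenerate basis with uniformly transverse elements exists at every point; this is exactly what Lemma~\ref{lem:rv} secures by giving $\sigma_1 > 0$.
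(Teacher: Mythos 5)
Your outline follows the paper's general strategy (find a basis $A\subset\P$ whose elements all make a quantitatively positive inner product with $x_0$, then use reflections across hyperplanes perpendicular to the $p_i$ at levels $>\rho$ to propagate membership and produce the cone), but the mechanism you propose in your ``Step three'' has a genuine gap. You fix one reflection level $s_i\approx p_i\cdot x_0$ per direction and argue that the part of $y-\co_r(A)$ ``sticking out'' past $\Pi_{p_i}(s_i)$ reflects back into known territory. But the only information available is the single point $y\in\Omega$ (plus $\B_r\subset\Omega$, which is only used to guarantee $\partial\Omega\subset\B_r^\cc$): the reflection property \eqref{eqn:rp} transfers membership pointwise, so reflecting across a \emph{fixed} hyperplane yields only the single mirror image of $y$, and composing the $N$ fixed reflections is an isometry carrying $y-\co_r(A)$ onto some congruent simplex, not into $\Omega$. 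Finitely many fixed hyperplanes cannot fill a solid simplex. The paper's Lemma~\ref{prop:lip} closes this by letting the hyperplane levels depend on the \emph{target point}: to reach $z=y-\sum_i a_ip_i$ one walks one direction at a time and reflects across the hyperplane through the midpoint of each segment (level $y_k\cdot p_k+a_k/2$), so each reflection maps the point already known to lie in $\Omega$ exactly onto the next one; varying the coefficients $(a_i)$ then sweeps out the whole cone. Only the lower bound $y_k\cdot p_k>\rho$ is needed at each step, which is why the calibration in \eqref{eqn:b} is simply $p\cdot x_0\ge\rho+2r$ minus the loss $r$ from letting $x$ range over $B_r(x_0)$; the upper control of $\dist(x_0,\Pi_{p_i}(s_i))$ that you anticipate as the delicate bookkeeping never enters.

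A secondary but real issue is your choice of basis. Taking ``the basis realizing the max-min in \eqref{eqn:s1}, oriented so that $p_i\cdot x_0>0$'' does not give the needed lower bound: $\sigma_1$ is defined relative to a root $p_1\in\P$, not to an arbitrary direction $x_0$, and a sign flip cannot repair a basis element nearly orthogonal to $x_0$ (then $s_i>\rho$ fails). The paper's Lemma~\ref{lem:nv} first uses $\sigma_2$ (the polar-type body $D$ in \eqref{eqn:s2}) to produce one root $p_1$ with $p_1\cdot x_0\ge\sigma_1^{-1}$ after rescaling, then takes a basis with $p_1\cdot p_i\ge\sigma_1$ and, crucially, replaces any deficient $p_i$ by its root-system reflection $2(p_1\cdot p_i)p_1-p_i\in\P$ (using \eqref{eqn:sym}), which forces $p_i\cdot x_0\ge 1$. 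This reflection-substitution, not a sign adjustment, is what makes the uniform lower bound work; with it and the midpoint-reflection lemma the rest of your plan (local constancy of $A$, two-sided cones, Lipschitz graph with constant controlled by $\sigma_3$) goes through as in the paper.
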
  

When $\phi(x) = \psi(x) = |x|$, the above theorem corresponds to the cone property and star-shapedness of a set having \emph{$\rho$-reflection} (see \cite[Lemma 21]{FelKim14}).  

\newcommand{\squarefig}[1]{
\begin{tikzpicture}[scale=2.5]
\draw[thick]
(0:1)--(22.5:0.9)--(45:1)--
(45:1)--(67.5:0.9)--(90:1)--
(90:1)--(112.5:0.9)--(135:1)--
(135:1)--(157.5:0.9)--(180:1)--
(180:1)--(202.5:0.9)--(225:1)--
(225:1)--(247.5:0.9)--(270:1)--
(270:1)--(292.5:0.9)--(315:1)--
(315:1)--(337.5:0.9)--(360:1)--
cycle;
\def\r{0.2}
\def\rs{\r*(2*sqrt(2 - sqrt(2)))}
\draw[fill=white!80!blue] (0,0) circle [radius=\r];
\draw[dashed] (0,0) circle [radius=\rs];

\begin{scope}[rotate=#1]
\draw[thin,dashed] (\r,-1)--(\r,1) (-1,\r)--(1,\r);
\def\s{0.707}
\draw[fill=gray!50!white] (\r,\s) -- (45:1) -- (\s,\r);
\draw[fill=gray!50!white] (2*\s-\r,\s) -- (45:1) -- (\s,2*\s-\r);
\end{scope}

\draw (135:1) node[above left] {$\Omega$};
\draw (0,0) node {$B_{\rho}(0)$};
\draw (-45:{\rs}) node[right=5pt,fill=white] {$B_{2\sqrt{2 - \sqrt2}\rho}(0)$};
\end{tikzpicture}
}
\begin{figure}
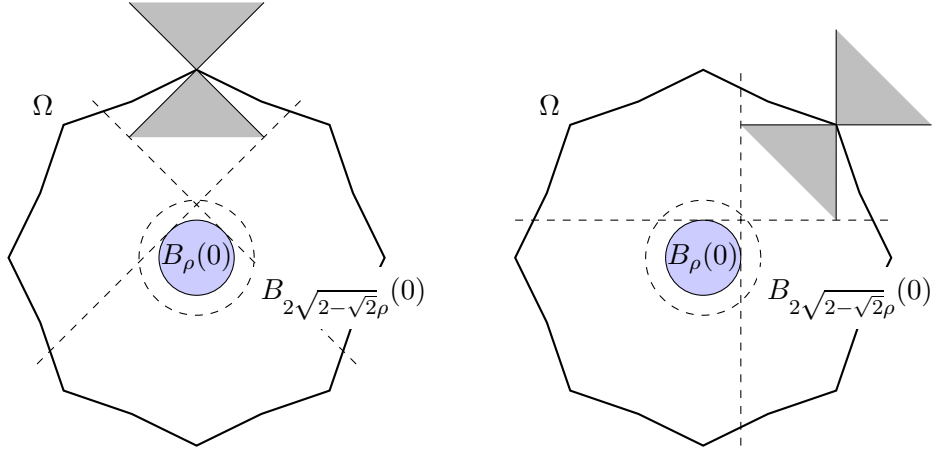

\centering
\begin{subfigure}[t]{0.4\textwidth}
\centering
\squarefig{45}
\end{subfigure}
\begin{subfigure}[t]{0.4\textwidth}
\centering
\squarefig{0}
\end{subfigure}
\caption{The cone property}
\label{fig:sq}
\end{figure}

\begin{ex}
\label{ex:sq}
In $\R^2$, recall $\P$ given in \eqref{eqn:sq}:
\begin{align*}
\P = \left\{ \pm e_1, \pm e_2, \tfrac{\pm 1}{\sqrt{2}}(e_1 + e_2), \tfrac{\pm 1}{\sqrt{2}}(e_1 - e_2) \right\} = \big\{ (\cos (k\pi / 4), \sin(k \pi/4)): 0 \leq k < 8\big\}.
\end{align*}
Then $\sigma_1(\P) = \cos(\pi/4) =  \frac{1}{\sqrt{2}}$ and $\sigma_2 (\P) = \cos(\pi/8)^{-1} = \sqrt{4 - 2\sqrt{2}}$ since $D$ is the regular octagon
$$
D = \{ (x_1, x_2) :  |x_1|, |x_2| \leq 1, |x_1 + x_2|, |x_1 - x_2| \leq \sqrt{2}\}.
$$ 
By Theorem~\ref{thm:lpb}, if $\Om$ satisfies \eqref{eqn:1as} and $\Om$ contains $B_{2\sqrt{2 - \sqrt{2}}\rho}(0)$, then $\Om$ has exterior and interior cones in $\Rn$ at any point $x \in \partial \Omega$ (see Figure~\ref{fig:sq}). 
\end{ex}

The proof of Theorem~\ref{thm:lpb} combines the following two geometric observations. For the rest of the section we assume that $\Om$ satisfies \eqref{eqn:1as}, or its equivalent form \eqref{eqn:rp}.

\begin{lem}
\label{lem:nv}
For $x \in \Rn$ with $|x| \geq \sigma_1^{-1} \sigma_2$, there exists a basis $A \subset \P$ such that
\begin{align}
\label{eqn:2nv}
p \cdot x  \geq  1 \quad \hbox{ for all } p \in A.
\end{align}
\end{lem}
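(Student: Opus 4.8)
The plan is to exploit the defining property \eqref{eqn:s1} of $\sigma_1$ together with the bound \eqref{eqn:s2} on the width body $D$. Fix $x$ with $|x|\geq\sigma_1^{-1}\sigma_2$, and set $p_1$ to be an element of $\P$ that maximizes $p\cdot \frac{x}{|x|}$ over $\P$; in particular $p_1\cdot x\geq 0$. First I would argue that $p_1\cdot x$ is bounded below. The point is that $\frac{x}{|x|}$ cannot be ``too orthogonal'' to all of $\P$: if $\max_{p\in\P}\big|p\cdot\frac{x}{|x|}\big| < \frac{1}{\sigma_2}$ held, then $\sigma_2\frac{x}{|x|}$ would lie in the interior of $D$ (in the sense that $|p\cdot(\sigma_2\frac{x}{|x|})|<1$ for all $p\in\P$), yet $|\sigma_2\frac{x}{|x|}|=\sigma_2=\max_{y\in D}|y|$, a contradiction. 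Hence there is some $q\in\P$ with $|q\cdot\frac{x}{|x|}|\geq\frac{1}{\sigma_2}$, and replacing $q$ by $-q$ if necessary (permissible since $\P=-\P$ by \eqref{eqn:sym}) we may take $q\cdot\frac{x}{|x|}\geq\frac{1}{\sigma_2}$. By the maximality of $p_1$ this gives $p_1\cdot\frac{x}{|x|}\geq q\cdot\frac{x}{|x|}\geq\frac{1}{\sigma_2}$, i.e.
\[
p_1\cdot x \;\geq\; \frac{|x|}{\sigma_2}.
\]

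Next I would invoke the definition of $\sigma_1$ in \eqref{eqn:s1}: for this particular $p_1$ there is a basis $A=\{p_i\}_{i=1}^N\subset\P$ with $p_1\in A$ (the $p_1$ coordinate is $|p_1\cdot p_1|=1\geq\sigma_1$, so including $p_1$ is consistent) realizing $\min_{1\leq i\leq N}|p_i\cdot p_1|\geq\sigma_1$. After replacing each $p_i$ by $-p_i$ when needed — again allowed by $\P=-\P$ — I can arrange $p_i\cdot p_1\geq\sigma_1$ for every $i$; note this replacement keeps $A$ a basis and keeps $A\subset\P$. Now decompose $x = (p_1\cdot x)\,p_1 + x^\perp$ with $x^\perp\perp p_1$. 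The idea is that $p_i\cdot x = (p_1\cdot x)(p_i\cdot p_1) + p_i\cdot x^\perp$, and the first term is at least $\frac{|x|}{\sigma_2}\cdot\sigma_1 \geq 1$ by the choice of $x$, while $|p_i\cdot x^\perp|\leq|x^\perp|$. The remaining issue is to control $|x^\perp|$.

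The main obstacle is exactly this control of the transverse component $x^\perp$: a priori $|x^\perp|$ could be as large as $|x|$, which would wreck the estimate. I expect the resolution is that $p_1$ being the \emph{maximizer} of $p\cdot\frac{x}{|x|}$ over $\P$ forces $\frac{x}{|x|}$ to sit inside a specific cone — roughly the ``Weyl chamber'' type region where $p_1$ dominates — and one can quantify, using $\P\subset\S^{N-1}$ and finiteness of $\P$, that $p_1\cdot\frac{x}{|x|}$ is in fact bounded below by a constant $c(\P)>0$ with $c(\P)\sigma_1 \geq$ the needed threshold; alternatively the intended statement may implicitly use the maximizing basis in \eqref{eqn:s1} more cleverly so that the orthogonal error is absorbed. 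I would therefore reexamine whether \eqref{eqn:2nv} should read $p\cdot x\geq 1$ for all $p\in A$ with the precise constant $\sigma_1^{-1}\sigma_2$ coming from $\frac{|x|}{\sigma_2}\cdot\sigma_1\geq 1$ \emph{and} the maximizer argument pinning $x$ close enough to the direction $\sum_{p\in A}p$; concretely, once $p_1\cdot x\geq\frac{|x|}{\sigma_2}$ and $A$ is chosen via \eqref{eqn:s1}, the geometry of the simplex cone $\co_1(A)$ (its incenter at distance $\geq\sigma_3$ from the walls, by \eqref{eqn:s3}) should give that $x/(p_1\cdot x)$ — or an appropriate rescaling of $x$ — lies in the closed cone generated by $A$, whence $p_i\cdot x\geq p_1\cdot x\cdot(\text{something})\geq 1$. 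I would carry out this last geometric step carefully, as it is where the three constants $\sigma_1,\sigma_2,\sigma_3$ must conspire, and it is the crux of the lemma.
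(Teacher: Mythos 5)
Your first two steps match the paper's: the bound $p_1\cdot x\geq \sigma_1^{-1}$ (equivalently $\geq |x|/\sigma_2$) via the definition of $\sigma_2$, and the choice, via \eqref{eqn:s1} and $\P=-\P$, of a basis $\{p_i\}_{i=1}^N\subset\P$ with $p_i\cdot p_1\geq\sigma_1$ for all $i$. But from there your argument has a genuine gap, which you yourself flag: the decomposition $x=(p_1\cdot x)p_1+x^\perp$ leaves the transverse term $p_i\cdot x^\perp$ uncontrolled (it scales like $|x|$, and neither the maximality of $p_1$ nor a ``Weyl chamber'' localization gives a usable bound in general), so the estimate $p_i\cdot x\geq 1$ is never established. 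Your proposed fixes (a quantitative cone/incenter argument involving $\sigma_3$, or revising the statement) are not what is needed; $\sigma_3$ plays no role in this lemma.

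The missing idea is to use the full root-system property $\Psi_{p_1}\P=\P$ from \eqref{eqn:sym}, not merely $\P=-\P$. For each $i\geq 2$, the vector $2(p_1\cdot p_i)p_1-p_i=-\Psi_{p_1}(p_i)$ again lies in $\P$, and one has the exact identity
\begin{align*}
\bigl(2(p_1\cdot p_i)p_1-p_i\bigr)\cdot x \;+\; p_i\cdot x \;=\; 2\,(p_1\cdot p_i)\,(p_1\cdot x)\;\geq\; 2\,\sigma_1\,\sigma_1^{-1}\;=\;2,
\end{align*}
so at least one of the two terms is $\geq 1$; note that in your notation the troublesome quantity $p_i\cdot x^\perp$ enters these two candidates with opposite signs and therefore cancels in the sum, which is exactly how the transverse component is absorbed. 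Replacing $p_i$ by $2(p_1\cdot p_i)p_1-p_i$ whenever $p_i\cdot x<1$ (the replacement preserves linear independence, since the new vector equals $-p_i$ plus a multiple of $p_1$) and keeping $p_1$, for which $p_1\cdot x\geq\sigma_1^{-1}\geq 1$, yields a basis $A\subset\P$ satisfying \eqref{eqn:2nv}. Without this reflection step the proof does not close.
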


\begin{proof}
Suppose $|x|\geq \sigma_1^{-1}\sigma_2$. Then $| p_1 \cdot x | \geq  \sigma_1^{-1}$ for some $p_1\in \P$, and by \eqref{eqn:sym} we can assume
\begin{align*}
p_1 \cdot x  \geq  \sigma_1^{-1}.
\end{align*}

Next observe that \eqref{eqn:sym} and the definition of $\sigma_1$ yields $\{p_i\}_{i=2}^N\subset \P$ that with $p_1$ span $\R^N$ such that
\begin{align*}
p_1 \cdot p_i \geq \sigma_1 \quad\hbox{ for all } 1 \leq i \leq N.
\end{align*}
Therefore
\begin{align*}
(2(p_1 \cdot p_i)p_1 - p_i) \cdot x + p_i \cdot x  = 2 (p_1 \cdot p_i) (p_1 \cdot x) \geq 2 \sigma_1 \sigma_1^{-1} = 2,
\end{align*}
which implies that at least one of the terms on the left is $\geq 1$.
%

Recall from \eqref{eqn:sym} that $2(p_1 \cdot p_i)p_1 - p_i \in \P$. For each $2 \leq i \leq N$, we replace $p_i$ in $\{p_i\}_{i=1}^N$ by $2(p_1 \cdot p_i)p_1 - p_i$ if  $p_i \cdot x < 1$. This new basis satisfies \eqref{eqn:2nv}.
\end{proof}

\begin{remark}
\label{rem:pri}
We point out that \eqref{as:ea} is essential for Lemma~\ref{lem:nv}, since if $\Pi$ is a hyperplane such that $\spn(\P \setminus \Pi) \neq \Rn$ then Lemma~\ref{lem:nv} does not hold for $x \perp \Pi$ no matter how large $|x|$ is. For instance if $N=2$ and $\P = \{\pm e_1, \pm e_2\}$, then $\P$ does not satisfy \eqref{as:ea} because
\begin{align*}
\P \setminus \Pi_{e_1}(0) = \{\pm e_1\}.
\end{align*}
\end{remark}

\begin{lem}
\label{prop:lip}
Suppose $x\in\Rn$ satisfies
\begin{align*}
\min_{p\in A} p \cdot x  \geq \rho+ r \qquad \text{for some basis } A \subset \P \hbox{ and } r>0.
\end{align*}
Then for $\co_r$ as given in \eqref{eqn:par}, we have
\begin{align*}
x + \co_{r}(A) \subset \Om^\mathsf{c} \hbox{ if } x\in \Om^\mathsf{c}\quad \hbox{ and } \quad x - \co_{r}(A) \subset \Om \hbox{ if } x\in \Om.
\end{align*}
\end{lem}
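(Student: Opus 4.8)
The plan is to connect $x$ to an arbitrary point of $x\pm\co_r(A)$ by a chain of at most $N$ reflections, one across a hyperplane $\Pi_p(s)$ for each $p\in A$, choosing at every step an admissible level $s>\rho$ so that the reflection property \eqref{eqn:rp} propagates membership in $\Om$ (resp.\ $\Om^\cc$) from one point of the chain to the next. First I would record two elementary consequences of \eqref{eqn:rp}, valid for any $p\in\P$ and $s>\rho$: (i) if $y\in\Om$ and $y\cdot p>s$, then $\Psi_{\Pi_p(s)}(y)\in\Om$; (ii) if $z\in\Om^\cc$ and $z\cdot p<s$, then $\Psi_{\Pi_p(s)}(z)\in\Om^\cc$. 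Here (i) holds because $\Psi_{\Pi_p(s)}(y)\in\Psi_{\Pi_p(s)}(\Om)\cap\Pi^-_p(s)\subset\Om$, and (ii) follows from (i): if $\Psi_{\Pi_p(s)}(z)$ were in $\Om$ it would lie in $\Pi^+_p(s)$, so applying (i) to it and using that $\Psi_{\Pi_p(s)}$ is an involution would force $z\in\Om$.

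For the first assertion, enumerate $A=\{p_1,\dots,p_N\}$, take $w=x-\sum_{i=1}^N a_ip_i\in x-\co_r(A)$ (so $a_i>0$ and $\sum_i a_i<r$), and set $x^{(0)}:=x$, $x^{(k)}:=x^{(k-1)}-a_kp_k$, so that $x^{(N)}=w$. The step from $x^{(k-1)}$ to $x^{(k)}$ is to be realized as reflection across $\Pi_{p_k}(s_k)$ with $s_k:=x^{(k-1)}\cdot p_k-\tfrac{a_k}{2}$, since $\Psi_{\Pi_{p_k}(s_k)}(x^{(k-1)})=x^{(k-1)}-a_kp_k=x^{(k)}$. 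To make this admissible I need $s_k>\rho$, which I would obtain from
\[
x^{(k-1)}\cdot p_k = x\cdot p_k-\sum_{i=1}^{k-1}a_i\,(p_i\cdot p_k)\;\geq\;x\cdot p_k-\sum_{i=1}^{k-1}a_i\;>\;(\rho+r)-(r-a_k)\;=\;\rho+a_k,
\]
using $p_i\cdot p_k\leq 1$, the hypothesis $x\cdot p_k\geq\rho+r$, and $\sum_{i=1}^{k-1}a_i\leq\sum_{i=1}^N a_i-a_k<r-a_k$. This gives $s_k>\rho+\tfrac{a_k}{2}>\rho$ and $x^{(k-1)}\cdot p_k>s_k$, so consequence (i) yields $x^{(k-1)}\in\Om\Rightarrow x^{(k)}\in\Om$; induction from $x^{(0)}=x$ then gives $w\in\Om$ whenever $x\in\Om$.

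The complementary case is symmetric: for $w=x+\sum a_ip_i\in x+\co_r(A)$ set $x^{(k)}:=x^{(k-1)}+a_kp_k$ and $s_k:=x^{(k-1)}\cdot p_k+\tfrac{a_k}{2}$. The very same computation bounds $x^{(k-1)}\cdot p_k>\rho+a_k$, hence $s_k>\rho$, while $x^{(k-1)}\cdot p_k<s_k<x^{(k)}\cdot p_k$ and $\Psi_{\Pi_{p_k}(s_k)}(x^{(k-1)})=x^{(k)}$, so consequence (ii) propagates membership in $\Om^\cc$ along the chain and $x\in\Om^\cc$ forces $w\in\Om^\cc$.

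The only delicate point — and where a lazier estimate would break the proof — is securing $s_k>\rho$ \emph{strictly}: the crude bound $\sum_{i=1}^{k-1}a_i\le r$ gives only $x^{(k-1)}\cdot p_k\ge\rho$, hence $s_k\ge\rho-\tfrac{a_k}{2}$, which does not exceed $\rho$ and is useless since \eqref{eqn:rp} is available only for $s>\rho$; one must exploit that every $a_i$ is positive, so that $\sum_{i<k}a_i$ falls short of $r$ by at least $a_k$, creating exactly the needed slack. Note that linear independence of $A$ is not actually used in the argument; it enters only through the definition \eqref{eqn:par} of $\co_r(A)$ as a full-dimensional simplex.
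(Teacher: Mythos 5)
Your proposal is correct and follows essentially the same route as the paper: a one-step consequence of \eqref{eqn:rp} (reflection across the midpoint hyperplane $\Pi_{p_k}(s_k)$ with level above $\rho$) iterated along a chain of $N$ points, with the same bookkeeping $\sum_{i<k}a_i < r - a_k$ (via $|p_i\cdot p_k|\le 1$) guaranteeing admissibility of each reflection. The only cosmetic difference is that the paper states a single one-step claim \eqref{26claim00} and treats the $\Om^\cc$ inclusion by contraposition, while you phrase separate statements for $\Om$ and $\Om^\cc$ and run both chains outward from $x$; the substance is identical.
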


\begin{proof}
 We first prove that 
 \begin{equation}\label{26claim00}
 \hbox{ If }  y \cdot p > \rho \hbox{ and }  y + ap \in \Omega \hbox{ for some } a>0, \hbox{ then } y \in \Omega.
 \end{equation}
 This follows from \eqref{eqn:rp} with reflection $\Psi := \Psi_{\Pi_p(y \cdot p +\frac a2)}$. Since 
 $$
 \rho < y \cdot p < y\cdot p + \frac a2 < y \cdot p + a,
 $$ \eqref{eqn:rp} yields $y = \Psi(y + ap) \in \Omega$ if $y +ap \in \Omega$. 
  
\medskip
  
Let  us denote $A=\{p_i\}_{i=1}^N$ and choose $\{a_k\}_{k=1}^N$ as in \eqref{eqn:par}. We now apply \eqref{26claim00} iteratively to $y_k = x +\sum_{i=k+1}^N a_ip_i$, $a_k$ and $p_k$, for $1 \leq k \leq N$. This is possible since
$$
y_k \cdot p_k - \rho \geq r +\sum_{i=k+1}^N a_ip_i \cdot p_k > 0 \quad \hbox{ for }  1 \leq k \leq N.
$$
Since $x +\sum_{i=1}^N a_i p_i = y_1 + a_1 p_1$, $y_{k-1} = y_k + a_kp_k$, $y_N = x$, we deduce that $x + \sum_{i=1}^N a_i p_i \in \Omega$ implies $x \in \Omega$, or equivalently, $x \in \Omega^\mathsf{c}$ implies $x + \sum_{i=1}^N a_i p_i \in \Omega^\mathsf{c}$.

\medskip

 We next apply \eqref{26claim00} to $y_k = x - \sum_{i=1}^k a_i p_i$ , $a_k$ and $p_k$. Noting that 
 $$
 x = y_1 +a_1p_1, y_{k-1} = y_k + a_k p_k \hbox{ and }  y_N = x -  \sum_{i=1}^N a_i p_i,
 $$ we conclude that $x \in \Omega$ implies $x -  \sum_{i=1}^N a_i p_i \in \Omega$.
\end{proof}

\textbf{Proof of Theorem~\ref{thm:lpb}.}

\medskip

From Lemma~\ref{lem:nv}, for any point $x_0 \in \Rn \setminus \B_r$ there is a basis $A  \subset \P$ such that
\begin{align*}
p \cdot x_0  \geq  \rho + 2r \hbox{ for all } p \in A.
\end{align*}
Since $|p \cdot x - p \cdot x_0 | \leq |x-x_0|$  for all   $p \in A$, we have
\begin{align*}
p \cdot x \geq p \cdot x_0 - r \geq \rho + r \hbox{ for all } p \in A \hbox{ and } x\in B_r(x_0),
\end{align*}
We can now conclude by Lemma~\ref{prop:lip}. \hfill$\Box$

\subsection{In and out-radius}
\label{sec:geo2}
In this section, we estimate the in-radius of sets satisfying \eqref{eqn:1as} in terms of their out-radius. Recall that $\sigma_i$ are given by \eqref{eqn:s1}--\eqref{eqn:s3}.

\begin{thm}
\label{thm:bal}
Suppose $\Omega$ satisfies \eqref{eqn:1as} and
$$
\Om \nsubseteq B_R(0) \hbox{ for some } R> \sigma_1^{-1}\sigma_2(1 + \sigma_2\sigma_3^{-1})\rho.
$$
Then $B_c(0) \subset \Om$, where $c$ satisfies $R= R(c)= \sigma_1^{-1} \sigma_2 \big(\rho +  (\sigma_2 \rho  +  c) \sigma_3^{-1}\big)$.
\end{thm}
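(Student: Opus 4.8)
The plan is to show that if $\Omega$ satisfies \eqref{eqn:1as} but is not contained in the ball $B_R(0)$ for a suitably large $R$, then it must contain a ball $B_c(0)$, by propagating a single far-away point of $\Omega$ into a whole neighborhood of the origin using the reflection property. The starting observation is that, since $\Omega\nsubseteq B_R(0)$, there is a point $x_0\in\Omega$ with $|x_0|\geq R$; I would like to first move this point ``inward'' along root directions so that it lies close to the origin while staying in $\Omega$. Concretely, by Lemma~\ref{lem:nv} (applied after rescaling, or by a direct adaptation) there is a basis $A\subset\P$ with $p\cdot x_0$ large for every $p\in A$; then Lemma~\ref{prop:lip}, or rather the basic claim \eqref{26claim00} in its proof, lets me subtract positive multiples of the $p\in A$ from $x_0$ and remain in $\Omega$. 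Running this until the point reaches roughly the $B_\rho(0)$ scale produces a point $z\in\Omega$ with $|z|$ controlled by $\sigma_2\rho$ (using the definition \eqref{eqn:s2} of $\sigma_2$, since a point with all $|p\cdot z|$ of order $\rho$ has norm at most $\sigma_2\rho$).

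Next I would convert ``one interior point near the origin'' into ``a small ball around the origin is interior.'' The mechanism again is \eqref{26claim00}: for any $w$ with $w\cdot p>\rho$ and $w+ap\in\Omega$ for some $a>0$, we get $w\in\Omega$, and symmetrically points can be pushed in the $-p$ direction once the relevant coordinate exceeds $\rho$. Choosing a basis $A\subset\P$ realizing the constant $\sigma_3$ in \eqref{eqn:s3}, the cone $\co_1(A)$ contains a ball of radius $\sigma_3$ around its ``center'' $\frac{1}{2N}\sum_{p\in A}p$; scaling this cone appropriately and anchoring its apex at a point of $\Omega$ obtained from the previous step, Lemma~\ref{prop:lip} (applied in the ``$x-\co_r(A)\subset\Omega$'' direction) fills in a simplex of interior points, which in turn contains a ball. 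Tracking the scale: if $z\in\Omega$ with $\min_{p\in A}p\cdot z\geq \rho+t$ then $z-\co_t(A)\subset\Omega$, and this simplex contains a ball of radius $\sigma_3 t$ whose center is at distance $\leq t$ from $z$; taking $t$ of order $(\sigma_2\rho+c)$ and choosing the far point $x_0$ at the threshold distance $R=R(c)=\sigma_1^{-1}\sigma_2\bigl(\rho+(\sigma_2\rho+c)\sigma_3^{-1}\bigr)$ makes the resulting ball exactly $B_c(0)$ after checking that the center can be arranged to be the origin (using reflections to recenter, or by starting the inward march from a carefully chosen point). The bookkeeping that the total ``budget'' consumed — the inward march of length $\sim\sigma_1^{-1}\sigma_2(\rho+(\sigma_2\rho+c)\sigma_3^{-1})$ — matches the hypothesis $|x_0|\geq R(c)$ is the step I would be most careful about.

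The main obstacle, I expect, is the geometric coordination between the two phases: the inward march in Lemma~\ref{prop:lip} only works along a basis $A$ on which $x_0$ has large positive inner products, and the ball-filling phase needs a basis realizing $\sigma_3$; a priori these are different bases, and one must either argue they can be taken compatible, or chain the two constructions (march inward with one basis to a point near the origin, then switch to the $\sigma_3$-optimal basis for the final fill, absorbing the mismatch into the constant $\sigma_2$ via \eqref{eqn:s2}). A secondary subtlety is that \eqref{26claim00} requires the intermediate coordinate to stay strictly above $\rho$ throughout the march, so the ordering of the root directions in which one subtracts matters and the inequalities must be arranged exactly as in the proof of Lemma~\ref{prop:lip}. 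Once these are in place, the identity $R=R(c)$ in the statement is simply the statement that the two length budgets balance, and the conclusion $B_c(0)\subset\Omega$ follows.
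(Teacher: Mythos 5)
There is a genuine gap, and it sits exactly where you flagged your own uncertainty: the recentering at the origin. Your phase 1 claims that marching a point inward along a basis $A\subset\P$ via \eqref{26claim00} produces a point $z\in\Omega$ with $|z|\lesssim\sigma_2\rho$, ``using the definition \eqref{eqn:s2} of $\sigma_2$.'' But $\sigma_2$ bounds $|z|$ only when $|p\cdot z|$ is controlled for \emph{every} $p\in\P$; the march controls (at best) the coordinates along the single basis $A$, and a bound through a basis alone involves the conditioning of $A$, which is not one of the constants $\sigma_1,\sigma_2,\sigma_3$ and need not give the threshold $R(c)$ in the statement. Moreover, your phase 2 needs $\min_{p\in A}p\cdot z\geq\rho+t$ with $t\approx\sigma_2\rho+c$, which forces $|z|\geq\rho+\sigma_2\rho+c$, in tension with ``$z$ near the origin''; the simplex you fill then contains a ball whose center lies near $z$, not at the origin, and ``using reflections to recenter, or starting the march from a carefully chosen point'' is precisely the step that is never supplied. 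Reflections in \eqref{eqn:1as} are only available across hyperplanes at distance $>\rho$ from the origin, so one needs a quantitative argument that repeated reflections bring the ball's center within $\sigma_2\rho$ of the origin while keeping the whole ball inside $\Omega$.

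The paper closes this gap with two ingredients your proposal does not have. First, Lemma~\ref{lem:br}: if $B_c(z)\subset\Omega$ and $z$ lies on the far side of an admissible hyperplane, then the \emph{entire reflected ball} $B_c(\Psi_{\Pi_p(s)}(z))$ is in $\Omega$ (half by the reflection property, the other half because it is contained in the original ball). Second, Proposition~\ref{prop:ibt}: iterate these ball reflections with $s=(1+\e)\rho$ and $q_m$ chosen as the maximizer of $p\cdot z_m$ over all of $\P$; each step decreases $|z_m|^2$ by at least $4s\e$, so the process terminates at some $z_{m^*}$ with $\max_{p\in\P}p\cdot z_{m^*}\leq s+\e$, and only then does the definition of $\sigma_2$ legitimately give $|z_{m^*}|\leq\sigma_2(s+\e)$, whence $B_{c-\sigma_2\rho}(0)\subset\Omega$ after $\e\to0$. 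The paper's order of operations is also the reverse of yours: it first fills a large ball $B_{\sigma_2\rho+c}(z_0)$ near the far point (your phase 2 mechanism, via Lemma~\ref{lem:nv}, Lemma~\ref{prop:lip} and \eqref{eqn:s3}, as in Proposition~\ref{prop:ib}), and only then transports that ball to the origin. Your cone-filling phase is essentially correct and matches Proposition~\ref{prop:ib}, but without the ball-reflection iteration the proof of $B_c(0)\subset\Omega$, with the stated constant $R(c)$, does not go through.
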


The proof proceeds in a number of steps. For the rest of the section we assume that $\Om$ satisfies \eqref{eqn:1as}, or its equivalent form \eqref{eqn:rp}.

\begin{prop}
\label{prop:ib}
Suppose there is  $x \in \Om$ such that  $|x| \geq \sigma_1^{-1} \sigma_2 (\rho + c\sigma_3^{-1})$ for some $c>0$. Then 
$$
B_c(z_0) \subset \Om \hbox{ for some } z_0 \in \Om\hbox{ such that }| x - z_0| \leq \frac{c }{2\sigma_3}.
$$
\end{prop}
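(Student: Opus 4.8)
The plan is to combine a rescaled version of Lemma~\ref{lem:nv} with Lemma~\ref{prop:lip} and the definition of $\sigma_3$. First, apply Lemma~\ref{lem:nv} to the rescaled vector $x/\mu$ with $\mu := \rho + c\sigma_3^{-1}$. The hypothesis $|x| \geq \sigma_1^{-1}\sigma_2(\rho + c\sigma_3^{-1}) = \mu\,\sigma_1^{-1}\sigma_2$ is precisely $|x/\mu| \geq \sigma_1^{-1}\sigma_2$, so Lemma~\ref{lem:nv} produces a basis $A = \{p_i\}_{i=1}^N \subset \P$ with $p \cdot x \geq \mu = \rho + c\sigma_3^{-1}$ for all $p \in A$. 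Since $x \in \Om$, Lemma~\ref{prop:lip} (with $r = c\sigma_3^{-1}$) then gives
\begin{align*}
x - \co_{c\sigma_3^{-1}}(A) \subset \Om.
\end{align*}

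Next I would inscribe a ball of radius $c$ inside this simplex. By definition \eqref{eqn:s3}, for every basis $A \subset \P$ one has $B_{\sigma_3}\bigl(\tfrac{1}{2N}\sum_{p\in A}p\bigr) \subset \co_1(A)$. The cone $\co_r(A)$ is the $r$-dilation of $\co_1(A)$ about the origin, so scaling this inclusion by $\lambda := c\sigma_3^{-1}$ yields
\begin{align*}
B_{c}\bigl(\tfrac{c}{2N\sigma_3}\textstyle\sum_{p\in A}p\bigr) = B_{\lambda\sigma_3}\bigl(\tfrac{\lambda}{2N}\textstyle\sum_{p\in A}p\bigr) \subset \co_{c\sigma_3^{-1}}(A).
\end{align*}
Setting $z_0 := x - \tfrac{c}{2N\sigma_3}\sum_{p\in A}p$, we obtain $B_c(z_0) \subset x - \co_{c\sigma_3^{-1}}(A) \subset \Om$, and in particular $z_0 \in \Om$.

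It remains to check the distance estimate: since each $p \in A$ lies in $\S^{N-1}$ and $|A| = N$, we get $|x - z_0| = \tfrac{c}{2N\sigma_3}\bigl|\sum_{p\in A}p\bigr| \leq \tfrac{c}{2N\sigma_3}\cdot N = \tfrac{c}{2\sigma_3}$, as claimed. I do not anticipate a serious obstacle here; the one point requiring care is bookkeeping the constants so that the threshold $\rho + r$ in Lemma~\ref{prop:lip} matches the split $\mu = \rho + c\sigma_3^{-1}$, and confirming that dilating $\co_1(A)$ by $\lambda$ transforms the inscribed ball center and radius by the same factor $\lambda$, which is immediate from the scaling-invariance of \eqref{eqn:par}.
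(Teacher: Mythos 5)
Your proof is correct and follows essentially the same route as the paper's: rescale Lemma~\ref{lem:nv} to get $p\cdot x\geq \rho+c\sigma_3^{-1}$ on a basis $A\subset\P$, apply Lemma~\ref{prop:lip} to obtain $x-\co_{c\sigma_3^{-1}}(A)\subset\Om$, and inscribe the ball $B_c$ centered at $z_0=x-\tfrac{c\sigma_3^{-1}}{2N}\sum_{p\in A}p$ using the definition of $\sigma_3$. Your version merely makes explicit the homogeneous rescaling of Lemma~\ref{lem:nv}, the dilation $\co_r(A)=r\co_1(A)$, and the distance bound $|x-z_0|\leq c/(2\sigma_3)$, all of which the paper leaves implicit.
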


\begin{proof}
From Lemma~\ref{lem:nv}, there is a basis $A \subset \P$ such that
$$
p \cdot x \geq \rho + c\sigma_3^{-1} \hbox{ for all } p \in A.
$$
Then Lemma~\ref{prop:lip} yields that
\begin{align*}
x - \co_{c\sigma_3^{-1}}(A) \subset \Om.
\end{align*}
It follows that $z_0:= x - \frac{c \sigma_3^{-1}}{2 N} \sum_{p \in A} p$ satisfies
\begin{align*}
B_{c} (z_0) \subset x - \co_{c\sigma_3^{-1}}(A) \subset \Om.
\end{align*}
\end{proof}

Next we show that the reflection of an interior ball of $\Om$ is also in $\Om$.

\begin{lem}
\label{lem:br}
For $p \in \P$, $s>\rho$ and $z \in \Pi_p^+(s)$,
\begin{align*}
\hbox{ if } B_c(z) \subset \Om, \hbox{ then } B_c(\Psi_{\Pi_p(s)}(z)) \subset \Om.
\end{align*}
\end{lem}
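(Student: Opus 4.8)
\textbf{Proof proposal for Lemma~\ref{lem:br}.}

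The plan is to exploit the defining reflection property \eqref{eqn:rp} directly, using the fact that a ball contained in $\Omega$ on the ``far'' side of a hyperplane $\Pi_p(s)$ with $s > \rho$ gets reflected into $\Omega$. Fix $p \in \P$, $s > \rho$, and $z \in \Pi_p^+(s)$ with $B_c(z) \subset \Omega$. Since reflections are isometries, $\Psi_{\Pi_p(s)}(B_c(z)) = B_c(\Psi_{\Pi_p(s)}(z))$, so it suffices to show $\Psi_{\Pi_p(s)}(B_c(z)) \subset \Omega$. The first step is to observe that $B_c(z)$ need not be entirely contained in $\Pi_p^+(s)$: we only know its center satisfies $z \cdot p > s$, but the ball could straddle the hyperplane if $c > z\cdot p - s$. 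So I would split $B_c(z)$ into the part in $\overline{\Pi_p^+(s)}$ and the part in $\Pi_p^-(s)$.

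For the portion $B_c(z) \cap \overline{\Pi_p^+(s)}$, each such point $y$ lies in $\Omega$ and in $\Pi_p^+(s)$ (or on the boundary hyperplane), so by \eqref{eqn:rp} we get $\Psi_{\Pi_p(s)}(y) \in \Omega \cap \overline{\Pi_p^-(s)}$ — here I would need to handle the boundary hyperplane $\Pi_p(s)$ itself by a limiting argument, or note that \eqref{eqn:rp} together with openness of $\Omega$ extends to the closure, since points on $\Pi_p(s)$ are fixed by $\Psi_{\Pi_p(s)}$ anyway. For the portion $B_c(z) \cap \Pi_p^-(s)$: its reflection under $\Psi_{\Pi_p(s)}$ lands in $\Pi_p^+(s)$, and moreover each such reflected point is of the form $\Psi_{\Pi_p(s)}(y)$ with $y \in B_c(z)$, hence $\Psi_{\Pi_p(s)}(y) \in B_c(\Psi_{\Pi_p(s)}(z))$. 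But actually the cleanest route is: the reflected point $\Psi_{\Pi_p(s)}(y)$ for $y \in B_c(z) \cap \Pi_p^-(s)$ satisfies $\Psi_{\Pi_p(s)}(y) \in B_c(z) \cap \Pi_p^+(s)$ as well, because $z \cdot p > s$ means reflecting a point of $B_c(z)$ that is below $\Pi_p(s)$ moves it toward $z$ and keeps it within distance $c$ of $z$ — this is the elementary fact that reflecting across a hyperplane that separates a point from the center of a ball (with the center strictly on the far side) keeps the point inside the ball. Hence $\Psi_{\Pi_p(s)}(y) \in B_c(z) \subset \Omega$ directly, with no appeal to \eqref{eqn:rp} needed for this piece.

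Combining: every point of $B_c(\Psi_{\Pi_p(s)}(z)) = \Psi_{\Pi_p(s)}(B_c(z))$ is either the reflection of a point of $B_c(z) \cap \overline{\Pi_p^+(s)}$ — handled by \eqref{eqn:rp} — or the reflection of a point of $B_c(z) \cap \Pi_p^-(s)$, which in turn lies in $B_c(z) \subset \Omega$. Therefore $B_c(\Psi_{\Pi_p(s)}(z)) \subset \Omega$, as claimed. The main obstacle, such as it is, is the bookkeeping on the boundary hyperplane $\Pi_p(s)$ and making precise the elementary geometric claim that reflecting a ball across a hyperplane strictly separating its center from that hyperplane sends the near half-ball into the far half-ball; I would state this as a one-line sub-claim verified by the distance computation $|\Psi_{\Pi_p(s)}(y) - z|^2 = |y - z|^2 - 4(z\cdot p - s)(s - y\cdot p) \le |y-z|^2$ whenever $y \cdot p \le s < z \cdot p$.
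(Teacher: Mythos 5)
Your proposal is correct and is essentially the paper's own argument: the paper splits the reflected ball $B_c(\Psi_{\Pi_p(s)}(z))$ by the hyperplane, handling the part in $\Pi_p^-(s)$ via \eqref{eqn:rp} and the part in $\Pi_p^+(s)\cup\Pi_p(s)$ via the distance inequality $|y-z|\le|y-\Psi_{\Pi_p(s)}(z)|$, which is exactly your two-piece decomposition transported through the (isometric, involutive) reflection, and your explicit computation $|\Psi_{\Pi_p(s)}(y)-z|^2=|y-z|^2-4(z\cdot p-s)(s-y\cdot p)$ is the quantitative form of that inequality. Your boundary bookkeeping is also fine, since points of $\Pi_p(s)$ are fixed by the reflection and thus need no limiting argument.
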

\begin{figure}
\centering
\begin{tikzpicture}[scale=0.2]
\fill[blue!2!white] plot [smooth] coordinates { (-8,4) (-3,5) (2,6) (12,3) } -- (12, -8) -- (-8, -8) -- cycle;
\draw (10,3) node[above right] {$\partial \Omega$};
\draw (1,2) circle [radius=3];
\draw (4,2) node[right] {$B_c(z)$};
\fill (1,2) node[right] {$z$} circle[radius=5pt];
\draw (-1,-2) circle [radius=3];
\draw (-1,-5) node[below right] {$\Psi_{\Pi_{p}(s)}(B_c(z))$};
\draw (-6,3) -- (6,-3) node[right] {$\Pi_{p}(s)$};
\draw[thick] plot [smooth] coordinates { (-8,4) (-3,5) (2,6) (12,3) };
\end{tikzpicture}
\caption{Ball reflection in the proof of Lemma~\ref{lem:br}.}
\label{fig:ball-reflection}
\end{figure}
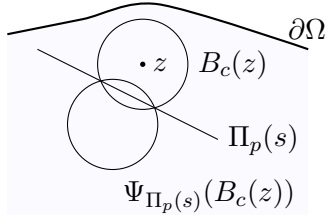
\begin{proof}
The argument is illustrated in Figure~\ref{fig:ball-reflection}. We have
\begin{align*}
\Psi_{\Pi_{p}(s)} ( B_c(z) \cap \Pi^+_{p}(s) ) =   B_c(\Psi_{\Pi_{p(s)}}(z)) \cap \Pi^-_{p}(s).
\end{align*}
As $B_c(z) \subset \Om$  and $\Om$ satisfies \eqref{eqn:rp}, we have
\begin{align*}
B_c(\Psi_{\Pi_{p(s)}}(z)) \cap \Pi^-_{p}(s)
\subset \Psi_{\Pi_{p}(s)} (\Om \cap \Pi^+_{p}(s)) \subset \Om \cap \Pi^-_{p}(s).
\end{align*}

\medskip

It remains to show that
\begin{align}
\label{eqn:ibt26}
B_c(\Psi_{\Pi_p(s)}(z)) \cap \left( \Pi^+_{p}(s) \cup \Pi_{p}(s) \right) \subset \Om.
\end{align}
Since $z \in \Pi_p^+(s)$ and  $\Psi_{\Pi_p(s)}(z) \in \Pi_p^-(s),$
we have
\begin{align*}
| y - z | \leq | y - \Psi_{\Pi_p(s)}(z) | \hbox{ for any } y \in \Pi^+_{p}(s) \cup \Pi_{p}(s)
\end{align*}
and thus we conclude \eqref{eqn:ibt26} by
\begin{align*}
B_c(\Psi_{\Pi_p(s)}(z)) \cap \left( \Pi^+_{p}(s) \cup \Pi_{p}(s) \right) \subset B_c(z) \subset \Om.
\end{align*}
\end{proof}

\begin{prop}
\label{prop:ibt}
Suppose $ B_c(z_0) \subset \Om$ for some  $z_0 \in \Om$ and $c> \sigma_2\rho$.
Then
\begin{align}
\label{eqn:2ibt}
B_{c-\sigma_2 \rho}(0) \subset \Om.
\end{align}
\end{prop}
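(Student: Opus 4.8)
The plan is to produce an interior ball of radius $c$ whose center lies within distance $\sigma_2\rho$ of the origin; once we have such a ball $B_c(z^*)\subset\Om$ with $|z^*|\le\sigma_2\rho$, then every $w$ with $|w|<c-\sigma_2\rho$ satisfies $|w-z^*|\le|w|+|z^*|<c$, hence $w\in B_c(z^*)\subset\Om$, which is exactly \eqref{eqn:2ibt}. To obtain such a center I would minimize $|z|$ over all admissible centers and use the ball-reflection Lemma~\ref{lem:br} to rule out a minimizer far from the origin.

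Concretely, set $S:=\{z\in\Rn: B_c(z)\subset\Om\}$ and $m:=\inf_{z\in S}|z|$. Since $z_0\in S$ we have $m\le|z_0|<\infty$. First I would check that the infimum is attained: for a minimizing sequence $(z_n)\subset S$ the numbers $|z_n|$ converge, so $(z_n)$ is bounded and, passing to a subsequence, $z_n\to z^*$ with $|z^*|=m$; moreover $B_c(z^*)\subset\Om$, because any $w$ with $|w-z^*|<c$ satisfies $|w-z_n|<c$ for large $n$ and hence $w\in B_c(z_n)\subset\Om$. Thus $z^*\in S$.

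Next I claim $|z^*|\le\sigma_2\rho$. Suppose not. By the definition \eqref{eqn:s2} of $\sigma_2$, scaling the slab set $D$ by $\rho$ gives $\rho D\subset\overline{B_{\sigma_2\rho}}(0)$, so $|z^*|>\sigma_2\rho$ forces $z^*\notin\rho D$, i.e.\ $|p\cdot z^*|>\rho$ for some $p\in\P$; replacing $p$ by $-p\in\P$ if necessary, we may assume $p\cdot z^*>\rho$. Choose $s$ with $\rho<s<p\cdot z^*$, so that $z^*\in\Pi_p^+(s)$. By Lemma~\ref{lem:br}, $B_c(\Psi_{\Pi_p(s)}(z^*))\subset\Om$, that is $\Psi_{\Pi_p(s)}(z^*)\in S$. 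But using $\Psi_{\Pi_p(s)}(z^*)=z^*-2(p\cdot z^*-s)p$ and $|p|=1$,
\[
\big|\Psi_{\Pi_p(s)}(z^*)\big|^2=|z^*|^2-4(p\cdot z^*-s)\,s<|z^*|^2,
\]
since $p\cdot z^*-s>0$ and $s>\rho>0$. This contradicts the minimality of $|z^*|$, so $|z^*|\le\sigma_2\rho$, and the conclusion follows as in the first paragraph.

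I expect the only mildly delicate point to be the attainment of the infimum $m$ — in particular if one does not wish to assume a priori that $\Om$ is bounded — which is handled above by observing that a minimizing sequence for $|z|$ is automatically bounded, together with the elementary closedness property of $S$ under limits of centers. Everything else is the bookkeeping of a single reflection step and the scaling characterization of $\sigma_2$.
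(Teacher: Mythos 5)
Your proof is correct. It rests on exactly the same two ingredients as the paper's: the ball-reflection Lemma~\ref{lem:br} and the computation that reflecting across $\Pi_p(s)$ with $\rho<s<p\cdot z$ strictly decreases $|z|^2$ (your identity $|\Psi_{\Pi_p(s)}(z)|^2=|z|^2-4(p\cdot z-s)s$ matches the paper's), together with the characterization of $\sigma_2$ via the set $D$, which is what lets you find a direction $p\in\P$ with $p\cdot z^*>\rho$ whenever $|z^*|>\sigma_2\rho$ (the paper instead picks $q_m\in\argmax_{p\in\P}p\cdot z_m$). Where you differ is in how the descent is organized: the paper runs an explicit iteration $z_{m+1}=\Psi_{\Pi_{q_m}(s)}(z_m)$ with $s=(1+\e)\rho$, uses the quantified decrease $|z_{m+1}|^2-|z_m|^2<-4s\e$ to guarantee a finite stopping index $m^*$ with $|z_{m^*}|\le\sigma_2(s+\e)$, and then lets $\e\to0$; you instead minimize $|z|$ over all admissible centers, justify attainment by the boundedness of a minimizing sequence and the stability of the inclusion $B_c(z)\subset\Om$ under limits of centers, and obtain a contradiction from a single reflection at the minimizer. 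Your extremal argument buys a cleaner statement (the bound $|z^*|\le\sigma_2\rho$ comes out exactly, with no $\e$-slack or stopping-time bookkeeping) at the modest cost of a compactness step, which you handle correctly; the paper's iteration is fully constructive and avoids any attainment argument. Both are complete proofs.
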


\begin{proof}
For a given $\e>0$, let $s:= (1+\e)\rho$ and construct sequences $\{z_m\}_{m \geq 0} \subset \Rn$ and $\{q_m\}_{m \geq 0} \subset \P$ by
\begin{align*}
z_{m+1} := \Psi_{\Pi_{q_{m}(s)}}(z_{m}) \quad  \hbox{and}\quad q_m \in \argmax_{ p \in \P }  p \cdot z_m .
\end{align*}
A direct  computation yields
$$
| z_{m+1} |^2 = | z_m - 2(z_m \cdot q_m) q_m + 2s q_m |^2 = | z_m |^2 + 4 s^2 - 4 s (z_m \cdot q_m).
$$
If we have $q_m \cdot z_{m} > s + \e$, then from the above equation we have
$$
| z_{m+1} |^2 - |z_m|^2 < -4s\e.
$$
Since $|z_m| \geq 0$ for all $m \geq 0$, there exists the smallest $m^* \geq 0$ such that $q_{m^*} \cdot z_{m^*} \leq s+\e$. Since $q_m$ is a maximizer of $p \cdot z_m$ in $\P$, we conclude that 
\begin{align}
\label{eqn:ibt12}
| z_{m^*} |  \leq \sigma_2 (s + \e).
\end{align}

Recall that $B_c(z_0) \subset \Omega$. Also we have $q_m \cdot z_m > s+\e$ for $0 \leq m<m^*$ by our choice of $m^*$, which implies $z_m \in \Pi_{q_m}^+(s)$  for $0 \leq m < m^*$.  Thus applying Lemma~\ref{lem:br} iteratively, we arrive at 
$$
B_c(z_{m+1}) = B_c(\Psi_{\Pi_{q_m}(s)}(z_m)) \subset \Omega \quad \hbox{ for } 0\leq m  \leq m^*-1,
$$ 
This and  \eqref{eqn:ibt12} yield
\begin{align*}
B_{c - \sigma_2(s+\e)}(0) \subset \Om.
\end{align*}
As $\e$ is arbitary, we conclude.
\end{proof}

\textbf{Proof of Theorem~\ref{thm:bal}.}

\medskip

From Proposition~\ref{prop:ib}, there exists $z_0 \in \Om$ such that
\begin{align*}
B_{\sigma_2 \rho + c}(z_0) \subset \Om.
\end{align*}
Proposition~\ref{prop:ibt} now yields $B_{\sigma_2 \rho + c - \sigma_2 \rho }(0) \subset \Om$.
\hfill$\Box$

\section{Viscosity solutions}
\label{sec:not}

In this section we introduce a notion of viscosity solutions for the level set equations of \eqref{model} when $\phi$ is smooth, namely when $\phi$ is $C^2$ away from the origin. Readers may skip this section during the first reading, as it does not directly relate to the geometric description of the results. 

Let us first recall some standard notations.

\begin{itemize}
\item $Q:=\Rn \times (0,\infty) \hbox{ and } Q_T:=\Rn \times (0,T], \qquad T > 0.$
\item For a domain $U \subset \Rn$ and $T>0$,  $U_T := U \times (0,T],$
$\partial_p U_T := (U \times \{0\}) \cup (\partial U \times [0, T])$.
\item For $(x_0,t_0) \in \R^N \times (0,\infty)$,
$$D_r(x_0,t_0) := B_r(x_0) \times (t_0 - r^2,t_0], \quad \partial_p D_r := (\oB_r(x_0) \times \{ t_0 - r^2 \}) \cup ( \partial B_r(x_0) \times [t_0 - r^2, t_0]).
$$
\item For $u : L \subset \R^{N + 1} \rightarrow \mathbb{R}$, we denote its semi-continuous envelopes $u_* , u^*: \oL \rightarrow \R$ by 
\begin{align}
\label{eqn:env}
u_*(x,t) := \lim_{\e \to 0+} \inf_{B_\e(x,t) \cap L} u\quad \hbox{ and } u^*(x,t) := \lim_{\e \to 0+} \sup_{B_\e(x,t) \cap L} u.
\end{align}

\end{itemize}

When $\lambda$ is continuous, the level set equation of \eqref{model}  can be written as 
\begin{align}
\label{eqn:the}
u_t =  \psi(-Du) (-\dv D\phi (-Du) + \lambda),
\end{align}
assuming $\Omega_t = \{u(\cdot,t)>0\}$. 
In this case we can use the following definition used in \cite{KimKwo18b} that proves convenient for stability properties. It can be shown to be equivalent to the more classical version in \cite{CCG91} using the argument in \cite[Sec.~7.2]{CafSal05}. 

\medskip

Recall that $\varphi\in C^{2,1}(D_r)$ is said to be a \emph{classical strict subsolution} (resp. \emph{supersolution}) of \eqref{eqn:the} if $\varphi_t < F_*(t, D\varphi, D^2\varphi)$ (resp. $\varphi_t > F^*(t, D\varphi, D^2\varphi)$) on $D_r$ with the right-hand side of \eqref{eqn:the} written as $F(t, p, X) := \psi(-p)(\trace (D^2_p\phi(-p) X) + \lambda(t))$, $p \neq 0$.

\begin{DEF}
\label{def:visd}
Let $\phi \in C^2(\Rn \setminus \{0\})$ and $\lam \in C([0,\infty))$.
\begin{itemize}
\item[$\circ$]
A function $u: Q\to \mathbb{R}$ is \textit{a viscosity subsolution} of \eqref{eqn:the} if  $u^*<\infty$ and for $D_r \subset Q$ and for every classical strict supersolution $\varphi \in C^{2,1}(D_r)$, $u^* < \varphi$ on $\partial_p D_r$ implies $u^* < \varphi$ in $\overline{D_r}$.
\item[$\circ$] A function $u: Q\to \mathbb{R}$ is \textit{a viscosity supersolution} of \eqref{eqn:the} if $u_*>-\infty$ and $D_r \subset Q$ and for every classical strict subsolution $\varphi \in C^{2,1}(D_r)$, $u_* > \varphi$ on $\partial_p D_r$ implies $u_* > \varphi$ in $\overline{D_r}$.
\end{itemize}
\end{DEF}

Since the forcing term $\lam$ in \eqref{model} may not exist in a classical sense, we cannot directly use the above notion of viscosity solutions. Indeed we will modify the notion to incorporate $\lambda$ as the distributional derivative of a continuous function $\Lambda$. To this end we use the set convolutions as follows. While the definition is mostly parallel to the isotropic case in \cite{KimKwo18b}, the geometric nature of the convolution is different.
 
 \medskip
 
 For $\gam \in C(\Rpz;\Rpz)$, the sup-convolution $\hu(\cdot;\gam)$ and inf-convolution $\tu(\cdot;\gam)$ over the Wulff shape $W_\psi$ are given by
\begin{align}
\label{eqn:sup}
\hu(x,t; \gam) := \sup_{x-\gam(t)W_\psi} u(\cdot,t)
\qquad\hbox{ and }\qquad
\tu(x,t; \gam) := \inf_{x+\gam(t)W_\psi} u(\cdot,t).
\end{align}
We often write $\hu(x,t)$ and $\tu(x, t)$ if $\gamma$ is understood from the context. Note that the sign $x - \gamma(t) W_\psi$ in \eqref{eqn:sup} is chosen so that 
$$\{\hu(\cdot, t; \gamma) > 0\} = \{u(\cdot, t; \gamma) > 0\} + \gamma(t) W_\psi.
$$
 As we do not assume that $\psi$ is even, in general $-W_\psi \neq W_\psi$.  See Figure~\ref{fig:supinfconv} for illustration.
 Note also that $\widehat{u^*} = (\widehat{u})^*$ and $\widetilde{u_*}  = (\widetilde{u})_*$ (see \cite[Lemma C.8]{KimKwo18b}).  
 
In contrast to the isotropic convolution over the balls used in \cite{KimKwo18b},  the sup-convolution and inf-convolution over the Wulff shape of $\psi$ are needed to modify the normal velocity of the level sets while accounting for the anisotropic factor $\psi$ (see Lemma~\ref{le:wulff-sup-conv}). This is the main feature that differs from the notion introduced in \cite{KimKwo18b}.

\medskip

Using these convolutions we now define viscosity solutions of the level set equation
\begin{align}
\label{main}
u_t  &= \psi(-Du) (-\dv D\phi (-Du) + \Lambda') \,\,\, \hbox{ in } Q
\end{align}
for the flow
\begin{align}
\label{main-flow}
V=\psi(\n)(-\k_\phi+\Lam').
\end{align}

\begin{DEF}
\label{def:visl} For a function $u:Q\to\mathbb{R}$,
\begin{itemize}
\item[$\circ$]
$u$ is \textit{a viscosity subsolution} of \eqref{main} if  $u^*<\infty$ and for any $0 \leq t_1 < t_2$ and $ \Theta \in C([t_1, t_2]) \cap C^1((t_1,t_2))$ such that $\Theta > \Lambda$ in $[t_1,t_2]$, the function $\hu = \hu(\cdot; \Theta - \Lam)$ given in \eqref{eqn:sup} is a viscosity subsolution of
\begin{equation}
\label{eqn:the2}
u_t =  \psi(-Du) (-\dv D\phi (-Du) + \Theta') \,\,\,\hbox{ in } (t_1, t_2) \times \Rn.
\end{equation}
\item[$\circ$]
$u$ is \textit{a viscosity supersolution} of \eqref{main} if  $u_*> -\infty$ and for any $0 \leq t_1 < t_2$ and $\Theta \in C([t_1, t_2]) \cap C^1((t_1,t_2))$ such that $\Theta < \Lambda$ in $[t_1,t_2]$, the function $\tu = \tu(\cdot; -\Theta+\Lam)$ given in \eqref{eqn:sup} is a viscosity supersolution of \eqref{eqn:the2}.\\ \\
\item[$\circ$] 
 $u$ is \textit{a viscosity solution} of \eqref{main} with initial data $u_0$ if $u^*$ and $u_*$ are respectively \textit{a viscosity subsolution} and \textit{a viscosity supersolution} of \eqref{main}, and if $u^*= {(u_0)}^*$ and $u_*= {(u_0)}_*$ at $t=0$. \\ \\
\item[$\circ$]
$\ott$ is \textit{a viscosity solution (subsolution or supersolution, respectively)} of \eqref{main-flow} if there exists \textit{a viscosity solution (subsolution or supersolution, respectively)} $u$ of \eqref{main} such that
\begin{align*}
\ot = \{ u(\cdot,t) > 0 \} \,\,\,\hbox{ for all } t \geq 0.
\end{align*}
\end{itemize}
\end{DEF}
To simplify the notation, we will sometimes say that $u: Q \to \R$ is a viscosity solution of the flow \eqref{main-flow} if it is a viscosity solution of the associated level set equation \eqref{main}.

\begin{figure}
\centering
\begin{tikzpicture}
\draw (0, 1) -- (-0.866, -0.5) -- (0.866, -0.5) -- cycle;
\draw (-1,1.05) -- (1,0.95);
\draw (-1,0.05) -- (1,-0.05);
\draw[dotted] (0,0)--(0.05,1);
\draw[<-] (0.025, 0.5) node[below right] {$\n$} -- (0, 0);
\draw[<->] (-0.75, 0.0375) -- node[left] {$\psi(\n)$} (-0.75 + 0.05, 0.0375 +1);
\fill (0, 0) node[below] {$0$} circle [radius=1pt];
\draw (0, -0.5) node[below] {$W_\psi$};
\begin{scope}[xshift=6cm]
\draw (-2, 0.1) -- (2, -0.1);
\fill[black!10!white] (-2, 0.1) -- (2, -0.1) -- (2, -1) -- (-2, -1) -- cycle;
\draw (-2, 0.7) -- (2, 0.5);
\begin{scope}[xshift=1.5cm,yshift=0.525cm,scale=0.25]
\draw[<-] (0.1, 2) node[above] {$\n$} -- (0, 0);
\end{scope}
\begin{scope}[xshift=-1.5cm,yshift=0.125cm]
\draw[<->] (0, 0) -- node[left] {$\gamma \psi(\n)$} (0.1 * 0.25, 2 * 0.25);
\end{scope}
\draw (0.8, 0.55) node [below right] {$\hu > 0$};
\draw (0.8,-0.05) node[below right] {$u > 0$};
\begin{scope}[scale=-0.6,yshift=-1cm]
\draw (0, 1) -- (-0.866, -0.5) -- node[above] {$x-\gamma W_\psi$} (0.866, -0.5) -- cycle;
\draw (0, 0) node[below] {$x$} circle [radius=1pt/0.6];
\end{scope}
\end{scope}
\end{tikzpicture}
\caption{Illustration why the sup-convolution over $W_\psi$, $\hu(\cdot, \cdot; \gamma)$, moves the boundary of super-level sets by distance $\gamma\psi(\n)$ in the outer normal $\n$ direction.}
\label{fig:supinfconv}
\end{figure}
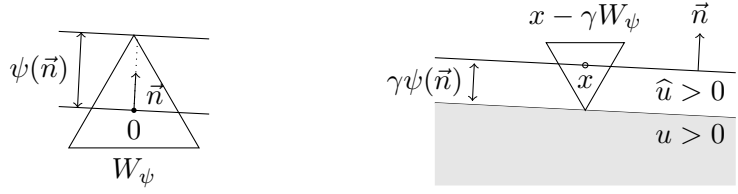

\medskip

By Definition~\ref{def:visl}, the following property holds for the set solutions. 
\begin{lem}
\label{lem:conv}
Let $\Lam \in C([0,\infty))$ and $\gam \in C(\Rpz;\Rpz)$, and denote
\begin{align*}
\hOm_t(\gam) := \bigcup_{y \in \gam(t)W_\psi} ( \Om_t + y ) \hbox{ and } \tOm_t(\gam) := \bigcap_{y \in \gam(t)W_\psi} ( \Om_t - y ).
\end{align*}
If $\ott$ is a viscosity subsolution of $V=\psi(\n)(-\k_\phi+\Lam')$, then $\hOm_t(\gam)$ is a viscosity subsolution of $V=\psi(\n)(-\k_\phi+\Lam'+\gam')$.

Similarly, if $\ott$ is a viscosity supersolution of $V=\psi(\n)(-\k_\phi+\Lam')$, then $\tOm_t(\gam)$ is a viscosity supersolution of $V=\psi(\n)(-\k_\phi+\Lam'-\gam')$.
\end{lem}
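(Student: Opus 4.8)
The plan is to unwind the definitions of viscosity sub/supersolution for the $\Lambda'$-flow (Definition~\ref{def:visl}) and reduce the claim to a direct computation with sup-convolutions. The key observation is that the Minkowski sum operation $\hOm_t(\gam)$ on sets corresponds precisely to the sup-convolution $\hu(\cdot;\gam)$ on level set functions: if $u$ is a level set function for $\ott$, i.e.\ $\ot = \{u(\cdot,t)>0\}$, then by the normalization chosen in \eqref{eqn:sup} we have $\{\hu(\cdot,t;\gam)>0\} = \{u(\cdot,t)>0\} + \gam(t)W_\psi = \hOm_t(\gam)$. So it suffices to show that if $u$ is a viscosity subsolution of \eqref{main} with $\Lambda$, then $\hat u(\cdot;\gam)$ is a viscosity subsolution of \eqref{main} with $\Lambda + \Gamma$, where $\Gamma' = \gam'$ (one may take $\Gamma = \gam$ up to an additive constant, which does not affect the distributional derivative). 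The supersolution statement is entirely analogous with inf-convolutions over $W_\psi$ and the replacement $\Lam' \mapsto \Lam' - \gam'$.

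The core of the argument: fix $0 \le t_1 < t_2$ and $\Theta \in C([t_1,t_2]) \cap C^1((t_1,t_2))$ with $\Theta > \Lambda + \Gamma$ on $[t_1,t_2]$. We must verify that $\widehat{(\hat u)}(\cdot; \Theta - (\Lambda+\Gamma)) = \hat u(\cdot; (\Theta - \Lambda - \Gamma) + (\Gamma - \text{const}))$ is a viscosity subsolution of \eqref{eqn:the2} with $\Theta'$. The point is that iterated sup-convolutions over $W_\psi$ compose additively in the scaling parameter: $\widehat{(\hat u(\cdot;\gamma_1))}(\cdot;\gamma_2) = \hat u(\cdot;\gamma_1+\gamma_2)$, since $(x - \gamma_2 W_\psi) - \gamma_1 W_\psi = x - (\gamma_1+\gamma_2)W_\psi$ by positive one-homogeneity and convexity of $\psi$ (so $W_\psi$ is convex and $(\gamma_1+\gamma_2)W_\psi = \gamma_1 W_\psi + \gamma_2 W_\psi$). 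Choosing $\gamma_1 = \Gamma$, $\gamma_2 = \Theta - \Lambda - \Gamma$, we get $\widehat{(\hat u)}(\cdot; \Theta - \Lambda - \Gamma) = \hat u(\cdot; \Theta - \Lambda)$. Now set $\widetilde\Theta := \Theta$; since $\Theta > \Lambda + \Gamma \ge \Lambda$ on $[t_1,t_2]$ (using $\Gamma = \gam \ge 0$), the definition of $u$ being a subsolution of \eqref{main} with $\Lambda$ applies directly with the test function $\widetilde\Theta = \Theta$: it tells us that $\hat u(\cdot; \Theta - \Lambda)$ is a viscosity subsolution of \eqref{eqn:the2} with $\Theta'$. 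This is exactly what we needed.

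I would then translate back to the set formulation: unwinding Definition~\ref{def:visl}'s last bullet, $\hOm_t(\gam) = \{\hat u(\cdot,t;\gam) > 0\}$ together with the just-established fact that $\hat u(\cdot;\gam)$ is a viscosity subsolution of \eqref{main} with $\Lambda + \Gamma$ shows $\hOm_t(\gam)$ is a viscosity subsolution of the flow $V = \psi(\n)(-\kp + \Lam' + \gam')$. The supersolution half runs mutatis mutandis: $\tOm_t(\gam) = \{\tilde u(\cdot,t;\gam) > 0\}$, inf-convolutions compose additively in the same way, and for $\Theta < \Lambda - \Gamma \le \Lambda$ one applies the supersolution property of $u$ with test function $\Theta$ after writing $\widetilde{(\tilde u(\cdot;\Gamma))}(\cdot; \Lambda - \Gamma - \Theta) = \tilde u(\cdot; \Lambda - \Theta)$.

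The main obstacle is the bookkeeping around additive composition of the Wulff-shape convolutions together with the sign/monotonicity conventions — in particular confirming that $\Theta > \Lam$ (resp. $\Theta < \Lam$) is implied, so that the defining property of $u$ is genuinely applicable, and that $\Gamma$ versus $\gam$ (they differ by a constant) does not cause trouble since only $\Gamma' = \gam'$ enters. The identity $(\gamma_1 + \gamma_2) W_\psi = \gamma_1 W_\psi + \gamma_2 W_\psi$ for $\gamma_i \ge 0$, which underlies the composition law, is a standard consequence of convexity and positive homogeneity of $W_\psi$ and I would invoke it without detailed proof; likewise $\widehat{u^*} = (\widehat u)^*$ is already quoted from \cite[Lemma~C.8]{KimKwo18b}. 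Everything else is a direct substitution into Definition~\ref{def:visl}.
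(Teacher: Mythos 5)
Your proposal is correct and follows essentially the same route as the paper: both arguments hinge on the additivity of the $W_\psi$-convolutions, $(\gamma_1+\gamma_2)W_\psi=\gamma_1 W_\psi+\gamma_2 W_\psi$ by convexity, so that the doubly convolved object equals $\hOm_t(\Theta-\Lam)$ (equivalently $\hu(\cdot;\Theta-\Lam)$), after which the defining subsolution property of $u$ applies directly since $\Theta>\Lam+\gam\geq\Lam$. Phrasing it at the level of the level-set function rather than the sets is only a cosmetic difference, as Definition~\ref{def:visl} reduces the set statement to the function statement anyway.
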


\begin{proof}
We only show the subsolution part, since the rest can be shown with parallel arguments. For a given $\Theta \in C([t_1, t_2]) \cap C^1((t_1,t_2))$ such that $\Theta > \Lam + \gam$ in $[t_1,t_2]$,  we need to show that 
\begin{align}
\label{eqn:cov11}
\hbox{ $\widehat{\hOm_t(\gam)}(\Theta - \Lam - \gam)$ is a viscosity subsolution of $V=\psi(\n)(-\k_\phi+\Theta')$ in $(t_1, t_2)$. } 
\end{align}
If this is the case, we conclude that $\hOm_t(\gam)$ is a viscosity subsolution of $V=\psi(\n)(-\k_\phi+\Lam'+\gam')$.
\medskip

As $W_\psi$ is convex, we get
\begin{align*}
\widehat{\hOm_t(\gam)}(\Theta - \Lam - \gam) = \bigcup_{\substack{y \in \gam(t)W_\psi, \\ z \in (\Theta - \Lam - \gam)(t)W_\psi}} ( \Om_t + y + z ) = \bigcup_{w \in (\Theta - \Lam)(t)W_\psi} (\Om_t + w ) = \hOm_t(\Theta - \Lam).
\end{align*}
As $\Theta > \Lam$ in $[t_1, t_2]$ and $\ott$ is a viscosity subsolution of $V=\psi(\n)(-\k_\phi+\Lam')$, it follows that $\hOm_t(\Theta - \Lam)$ is a viscosity solution of $V=\psi(\n)(-\k_\phi+\Theta')$ in $(t_1, t_2)$. We can now conclude.
\end{proof}

\begin{DEF}
\label{def:model}
For $\Lam\in C([0,\infty))$, the pair $(\ott,\Lam)$ is a viscosity solution of \eqref{model} if
\begin{align*}
\ott \hbox{ is a viscosity solution of } V=\psi(\n)(-\k_\phi+\Lam') \hbox{ and } |\Om_t| = |\Om_0|.
\end{align*}
\end{DEF}

Below we present the comparison principle, stability and well-posedness properties for our notion of viscosity solutions. The proofs are omitted as they are parallel to those of \cite[Theorems~2.10, 2.12 \& 2.14]{KimKwo18b}.

\begin{thm}
\label{thm:comp}
Given $\phi \in C^2(\R^N \setminus \{0\})$ and $\Lam \in C([0,\infty))$, let $u$ and $v$ be each a viscosity subsolution and a viscosity supersolution of \eqref{main}. Then the following holds for any bounded domain $U$ in $\Rn$ and $T>0$: If $u^* \leq v_* \hbox{ on } \partial_p U_T$, then $u^* \leq v_*$ in $U_T$.
\end{thm}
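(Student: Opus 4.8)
The plan is to reduce the comparison principle for \eqref{main} to the comparison principle for the level set equation \eqref{eqn:the} with a \emph{continuous} forcing, to which the classical viscosity theory for smooth anisotropic curvature flow applies (via the argument of \cite{CCG91,CafSal05}, as noted after Definition~\ref{def:visd}). The key point is that $u$ being a viscosity subsolution of \eqref{main} and $v$ being a viscosity supersolution are precisely statements about the sup- and inf-convolutions of $u$ and $v$ over $W_\psi$ being sub-/supersolutions of \eqref{eqn:the2} for forcings $\Theta$ that strictly over- or under-estimate $\Lambda$. So I would fix $\delta > 0$, pick $\Theta_\delta \in C^\infty((t_1,t_2)) \cap C([t_1,t_2])$ with $\Lambda - \delta < \Theta_\delta < \Lambda + \delta$ on $[0,T]$ — e.g. a smooth approximation of $\Lambda$ itself — actually more carefully I want \emph{two} forcings: $\Theta^+_\delta > \Lambda$ and $\Theta^-_\delta < \Lambda$ on $[0,T]$ with $\Theta^+_\delta - \Theta^-_\delta \to 0$ uniformly as $\delta \to 0$, both in $C([0,T]) \cap C^1((0,T))$. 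Then $\hu := \hu(\cdot;\Theta^+_\delta - \Lambda)$ is a viscosity subsolution of \eqref{eqn:the2} with forcing $\Theta^+_\delta$, and $\tu := \tu(\cdot;-\Theta^-_\delta + \Lambda)$ is a viscosity supersolution of \eqref{eqn:the2} with forcing $\Theta^-_\delta$.

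Next I would track what the convolutions do to the ordering on the parabolic boundary. Since $\Theta^+_\delta - \Lambda \to 0$ and $\Theta^-_\delta - \Lambda \to 0$ uniformly, the convolution radii $\gamma^\pm_\delta := \|\Theta^\pm_\delta - \Lambda\|_\infty \to 0$, so $\hu$ converges to $u^*$ and $\tu$ converges to $v_*$ locally uniformly (up to semicontinuous envelopes); the hypothesis $u^* \le v_*$ on $\partial_p U_T$ gives $\hu \le \tu + \omega(\delta)$ on a slightly enlarged parabolic boundary, where $\omega(\delta) \to 0$. Because $\Theta^+_\delta$ and $\Theta^-_\delta$ are genuinely different forcings, I would absorb the gap by a standard device: the function $(x,t) \mapsto \tu(x,t) + \int_0^t (\Theta^+_\delta - \Theta^-_\delta)\,ds \cdot \psi(\text{something})$ — more cleanly, I'd note that shifting a supersolution of the flow with forcing $\Theta^-_\delta$ upward in the normal direction by a controlled amount produces a supersolution of the flow with forcing $\Theta^+_\delta$ (this is exactly the content of Lemma~\ref{lem:conv} applied once more, with $\gamma(t) = \int_0^t(\Theta^+_\delta - \Theta^-_\delta)$), so that after this adjustment both $\hu$ and the adjusted $\tu$ solve \eqref{eqn:the2} with the \emph{same} forcing $\Theta^+_\delta$. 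Apply the classical comparison principle for \eqref{eqn:the2} on $U_T$ (shrinking $U$ slightly to accommodate the convolutions) to conclude $\hu \le \widetilde{v}_{\mathrm{adj}}$ in $U_T$, then send $\delta \to 0$; the convolution radii and the adjustment both vanish, yielding $u^* \le v_*$ in $U_T$.

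The main obstacle I expect is bookkeeping the spatial and temporal boundary losses introduced by the two convolutions and the normal shift simultaneously: each operation enlarges or shrinks the domain where the sub-/supersolution property and the boundary inequality hold, and one must check these are all $O(\delta)$ and compatible, so that after passing to the limit nothing is lost on $\partial_p U_T$ or in $U_T$. A secondary technical point is justifying the local uniform convergence $\hu \to u^*$, $\tu \to v_*$ as the convolution radius tends to $0$ — this uses only that $W_\psi$ is a bounded convex neighborhood of the origin and the semicontinuity of $u^*,v_*$, together with $\widehat{u^*} = (\widehat u)^*$ and $\widetilde{v_*} = (\widetilde v)_*$ from \cite[Lemma C.8]{KimKwo18b}. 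Since all of these steps are parallel to \cite[Theorem~2.10]{KimKwo18b} with the isotropic ball convolution replaced by the $W_\psi$-convolution, and the only genuinely new ingredient (that $W_\psi$-convolution shifts level sets by $\gamma\psi(\n)$, hence interacts correctly with the $\psi(\n)$ factor in the velocity) is recorded in Lemma~\ref{le:wulff-sup-conv} and Figure~\ref{fig:supinfconv}, I would simply reference that proof rather than reproduce it, which is consistent with the remark in the text that the proofs are omitted.
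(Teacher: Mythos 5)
Your overall strategy --- unpack Definition~\ref{def:visl}, pass to the $W_\psi$-convolutions, compare them through the classical comparison principle for \eqref{eqn:the2} with continuous forcing, and send the convolution radii to zero --- is the route the paper intends (its proof is omitted and deferred to the isotropic analogue \cite[Theorem~2.10]{KimKwo18b}). However, the device you propose for reconciling the two different forcings is not valid. Lemma~\ref{lem:conv} (and Lemma~\ref{le:wulff-sup-conv}) is directional: sup-convolution/outward dilation preserves \emph{subsolutions}, inf-convolution/erosion preserves \emph{supersolutions}. ``Shifting a supersolution of the flow with forcing $(\Theta^-_\delta)'$ upward in the normal direction'' is not covered by the lemma and is false in general: for instance, the constant outward dilation of the shrinking ball solution of mean curvature flow lies strictly inside the solution started from the dilated ball for positive times, so it is not a supersolution. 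Moreover, even the admissible operation (eroding $\tu$ by $\gamma(t)=\int_0^t(\Theta^+_\delta-\Theta^-_\delta)\,ds$) changes the forcing to $(\Theta^-_\delta)'-\gamma'=(\Theta^-_\delta)'-(\Theta^+_\delta-\Theta^-_\delta)$, not to $(\Theta^+_\delta)'$, since it is $\gamma'$, not $\gamma$, that enters the equation. The correct and much simpler device is to exploit that only $\Theta'$ appears in \eqref{eqn:the2}: choose $\Theta$ smooth with $\Lam<\Theta<\Lam+\delta$ on $[0,T]$ and use the pair $\Theta^+_\delta:=\Theta$, $\Theta^-_\delta:=\Theta-2\delta<\Lam$, which differ by a constant. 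Then $\hu(\cdot;\Theta-\Lam)$ and $\tu(\cdot;\Lam-\Theta+2\delta)$ are a sub- and a supersolution of the \emph{same} equation \eqref{eqn:the2}, and no adjustment step is needed at all.

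A second, unresolved point is the transfer of the boundary inequality. You assert that $u^*\le v_*$ on $\partial_p U_T$ yields $\hu\le\tu+\omega(\delta)$ on a slightly enlarged parabolic boundary, but this does not follow: $\hu\ge u^*$ and $\tu\le v_*$, and on and near the lateral boundary the convolutions sample $u^*$ and $v_*$ at points outside $\overline U$, where no ordering is assumed (a subsolution may be arbitrarily large there). Getting from the hypothesis on $\partial_p U_T$ to an inequality between the convolved functions on a suitable parabolic boundary is exactly where the work of \cite[Theorem~2.10]{KimKwo18b} lies (e.g.\ by arguing at an interior maximum of $\hu-\tu$ rather than applying comparison with boundary data directly), and declaring all losses to be $O(\delta)$ does not substitute for that argument.
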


For a sequence of functions $\{ u_k \}_{k \in \N}$ on $Q$, the half-relaxed limits are defined as
\begin{align*}
\limsups_{k \rightarrow \infty}u_k(x,t) &:= \lim\limits_{j \to \infty} \sup \left\{ u_k(y,s) : k \geq j,\quad |y-x| \leq \tfrac{1}{j}, \quad |s-t| \leq \tfrac{1}{j} \right\},\\
\liminfs_{k \rightarrow \infty}u_k(x,t) &:= \lim\limits_{j \to \infty} \inf \left\{ u_k(y,s) : k \geq j,\quad |y-x| \leq \tfrac{1}{j}, \quad |s-t| \leq \tfrac{1}{j} \right\}.\nonumber
\end{align*}

\begin{thm}
\label{thm:sta}
Let $\phi\in C^2(\R^N \setminus \{0\})$, and let  $\{\Lambda_k\}_{k\in\N}$ be a sequence in  $C([0,\infty))$ that locally uniformly converges to $\Lambda_\infty$ as $k\to\infty$. Let $ u_k$ be a viscosity subsolution (supersolution, respectively) of \eqref{main} with $\Lambda = \Lambda_k$. If 
$$u:= \limsups_{k \rightarrow \infty} u_k<\infty \,\,\,( u:=\liminfs_{k \rightarrow \infty}u_k > -\infty, \hbox{respectively}),
$$ then $u$ is a viscosity subsolution (supersolution, respectively) of \eqref{main} with $\Lam = \Lambda_\infty$.
\end{thm}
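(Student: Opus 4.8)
The plan is to reduce the statement to the stability of the auxiliary level set equation \eqref{eqn:the2}, for which the standard viscosity-solution stability theory applies. I will treat the subsolution case; the supersolution case is symmetric. Fix $0 \leq t_1 < t_2$ and $\Theta \in C([t_1,t_2]) \cap C^1((t_1,t_2))$ with $\Theta > \Lambda_\infty$ on $[t_1,t_2]$. By Definition~\ref{def:visl} I must show that $\hu := \widehat{u}(\cdot\,;\Theta - \Lambda_\infty)$ is a viscosity subsolution of \eqref{eqn:the2} on $(t_1,t_2)\times\Rn$. The first step is a perturbation: since $[t_1,t_2]$ is compact and $\Theta - \Lambda_\infty > 0$ there, while $\Lambda_k \to \Lambda_\infty$ uniformly on $[t_1,t_2]$, for every small $\delta > 0$ we have $\Theta + \delta > \Lambda_k$ on $[t_1,t_2]$ for all $k$ large. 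Hence, from the hypothesis that $u_k$ is a viscosity subsolution of \eqref{main} with $\Lambda = \Lambda_k$, the function $\hu_k^\delta := \widehat{u_k}(\cdot\,;\Theta + \delta - \Lambda_k)$ is, for $k$ large, a viscosity subsolution of the level set equation \eqref{eqn:the2} with right-hand side forcing $(\Theta+\delta)' = \Theta'$.

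The second step is to pass to the limit $k \to \infty$ inside the sup-convolution. Here I would invoke the commutation between the half-relaxed upper limit and the sup-convolution over $W_\psi$: because $W_\psi$ is compact and convex and $\Theta + \delta - \Lambda_k \to \Theta + \delta - \Lambda_\infty$ locally uniformly, one checks that
\begin{align*}
\limsups_{k\to\infty} \hu_k^\delta = \widehat{\left(\limsups_{k\to\infty} u_k\right)}(\cdot\,;\Theta+\delta - \Lambda_\infty) = \widehat{u}(\cdot\,;\Theta+\delta-\Lambda_\infty) =: \hu^\delta,
\end{align*}
using $u = \limsups_k u_k$ and, as noted in the excerpt, $\widehat{u^*} = (\widehat u)^*$. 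Then the classical stability theorem for viscosity subsolutions of the (geometric, possibly singular but here smooth since $\phi\in C^2$) level set equation \eqref{eqn:the2}—in the form compatible with Definition~\ref{def:visd}, i.e. closedness of the subsolution property under $\limsups$ with locally uniformly converging continuous forcing—yields that $\hu^\delta$ is a viscosity subsolution of \eqref{eqn:the2}. This is exactly the content that is "parallel to \cite[Theorem~2.12]{KimKwo18b}"; the only genuinely new ingredient relative to the isotropic case is that the convolution is over $W_\psi$ rather than a ball, but the commutation argument is insensitive to the shape of the compact convex convolving set.

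The third step removes the auxiliary parameter $\delta$. As $\delta \downarrow 0$, the convolution radii $\Theta + \delta - \Lambda_\infty$ decrease to $\Theta - \Lambda_\infty$, so $\hu^\delta \downarrow \hu$ pointwise (monotone in $\delta$, since a smaller convolving Wulff shape gives a smaller sup); one then argues that a decreasing limit of viscosity subsolutions of \eqref{eqn:the2} with the \emph{same} forcing $\Theta'$ is again a viscosity subsolution—equivalently, $\hu = \widehat{u}(\cdot\,;\Theta-\Lambda_\infty)$ is recovered as $\limsups_{\delta \to 0} \hu^\delta$ and stability applies once more with the trivial constant sequence of forcings $\Theta'$. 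This gives that $\hu$ is a viscosity subsolution of \eqref{eqn:the2}, which is precisely what Definition~\ref{def:visl} requires; since $\Theta > \Lambda_\infty$ was arbitrary, $u$ is a viscosity subsolution of \eqref{main} with $\Lambda = \Lambda_\infty$, and $u < \infty$ holds by hypothesis. The main obstacle is the commutation of $\limsups$ with the $W_\psi$-sup-convolution together with keeping the forcing comparison $\Theta + \delta > \Lambda_k$ robust in $k$; both are routine once one sets up the uniform-convergence bookkeeping carefully, and this is why the authors are content to refer to the analogous proof in \cite{KimKwo18b}.
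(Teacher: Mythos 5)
Your plan is correct and is essentially the argument the paper has in mind (it omits the proof as parallel to \cite[Theorem~2.12]{KimKwo18b}): reduce via Definition~\ref{def:visl} to stability for the fixed-forcing equation \eqref{eqn:the2}, using the commutation of $\limsups$ with the $W_\psi$-sup-convolution as in Lemma~\ref{le:uhat-limsup}. The only cosmetic remark is that the $\delta$-perturbation and its removal in your third step are unnecessary: since $\Theta-\Lambda_\infty$ has a positive minimum on the compact interval $[t_1,t_2]$ and $\Lambda_k\to\Lambda_\infty$ uniformly there, one already has $\Theta>\Lambda_k$ for all large $k$, so one can work directly with $\widehat{u_k}(\cdot;\Theta-\Lambda_k)$ and pass to the half-relaxed limit once.
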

As in \cite{CCG91} we denote
$$
C_a(\R^N) = \{u: u-a\in C_c(\R^N)\}.
$$
\begin{thm}
\label{thm:vex}
Let $\Lam \in C([0,\infty))$, $\phi \in C^2(\R^N \setminus \{0\})$ and $g \in C_a(\Rn)$ for some $a \in \R$. Then for any $T > 0$ there is a unique viscosity solution $u \in C_a(\overline{Q_T})$ of \eqref{main} with initial data $g$.
\end{thm}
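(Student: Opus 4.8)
\emph{Plan.} I would get uniqueness from the comparison principle (Theorem~\ref{thm:comp}) and existence by approximating $\Lambda$ with $C^1$ functions, solving the classical level-set problem for each, and passing to the limit via Theorems~\ref{thm:sta} and~\ref{thm:comp}; this follows the pattern of \cite[Theorems~2.10, 2.12 \& 2.14]{KimKwo18b}. For uniqueness, let $u,v\in C_a(\overline{Q_T})$ both solve \eqref{main} with data $g$. The first step is to confine $\{u(\cdot,t)\ne a\}$ and $\{v(\cdot,t)\ne a\}$ to a fixed ball $B_R(0)$ for $t\in[0,T]$. Take a constant $\Theta\equiv M$ with $M>\max_{[0,T]}\Lambda$; by Definition~\ref{def:visl}, $\hu=\hu(\cdot;\Theta-\Lambda)$ is a viscosity subsolution of the \emph{forcing-free} equation \eqref{eqn:the2} (with $\Theta'\equiv 0$), and comparison with the constant solution $a+\|g-a\|_{\infty}$ and with large dilated Wulff shapes gives $\|\hu-a\|_{\infty}\le\|g-a\|_{\infty}$ and $\{\hu(\cdot,t)>c\}\subset B_R$ for every $c>a$, hence the same for $u$ since $\{u(\cdot,t)>c\}\subset\{\hu(\cdot,t)>c\}$; symmetrically, $\Theta\equiv m<\min_{[0,T]}\Lambda$ together with the inf-convolution $\tu$ controls $\{u(\cdot,t)<c\}$ for $c<a$. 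Theorem~\ref{thm:comp} on $U=B_R$, applied in both directions, then yields $u=v$ on $\overline{Q_T}$.

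\emph{Existence.} Mollifying, choose $\Lambda_k\in C^1([0,\infty))$ with $\Lambda_k\to\Lambda$ locally uniformly. As $\lambda_k:=\Lambda_k'$ is continuous, \eqref{eqn:the} with forcing $\lambda_k$ is the classical anisotropic curvature level-set equation with bounded forcing, so by Chen, Giga and Goto \cite{CCG91} it has a unique solution $u_k\in C_a(\overline{Q_T})$ with data $g$ in the sense of Definition~\ref{def:visd}. Choosing $\Theta=\Lambda_k$ in Definition~\ref{def:visl} makes the convolution radius identically $0$, so $u_k$ is also a viscosity solution of \eqref{main} with $\Lambda=\Lambda_k$. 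Since $\Lambda_k\to\Lambda$ uniformly on $[0,T]$, a single constant $\Theta\equiv M>\sup_k\max_{[0,T]}\Lambda_k$ (and a single $m$ from below) works in the barrier step above simultaneously for all $k$, giving $\|u_k-a\|_{L^\infty(\overline{Q_T})}\le\|g-a\|_{\infty}$ and $\{u_k(\cdot,t)\ne a\}\subset B_R$ with $R$ independent of $k$.

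\emph{Limit.} Set $\overline u:=\limsups_{k\to\infty}u_k$ and $\underline u:=\liminfs_{k\to\infty}u_k$; the uniform bounds make these finite and equal to $a$ off $B_R$. By Theorem~\ref{thm:sta} (with $\Lambda_k\to\Lambda_\infty=\Lambda$), $\overline u$ is a viscosity subsolution and $\underline u$ a viscosity supersolution of \eqref{main} with $\Lambda$. Pinning the same barriers at $t=0$ and using the continuity of $g$ gives $\overline u^*\le g^*$ and $\underline u_*\ge g_*$ at $t=0$. Theorem~\ref{thm:comp} then forces $\overline u\le\underline u$ on $\overline{Q_T}$, while $\overline u\ge\underline u$ holds by construction; hence $u:=\overline u=\underline u$ is continuous (being at once u.s.c.\ and l.s.c.), lies in $C_a(\overline{Q_T})$, satisfies $u(\cdot,0)=g$, and is the desired viscosity solution.

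\emph{Main obstacle.} The limit step is routine precisely because Definition~\ref{def:visl} is built so that sub/supersolutions of \eqref{main} reduce, after a sup/inf-convolution over $W_\psi$, to sub/supersolutions of the \emph{standard} equation \eqref{eqn:the2} with continuous forcing, for which Theorems~\ref{thm:comp} and~\ref{thm:sta} already apply off the shelf. The genuine difficulty is the $k$-uniform spatial localization and the initial layer: $\Lambda_k'$ is not uniformly bounded, so one cannot barrier \eqref{eqn:the2} with that forcing directly. The resolution, as in \cite{KimKwo18b}, is never to differentiate $\Lambda_k$ but instead to squeeze it between constants (or affine functions), turning every barrier comparison into one against the forcing-free flow, whose dilated-Wulff-shape and half-ball barriers are explicit with $k$-independent constants, and then to transfer the estimates back to $u_k$ through the convolution via $\{u_k>c\}\subset\{\hu>c\}$ and $\{u_k<c\}\subset\{\tu<c\}$.
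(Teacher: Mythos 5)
Your overall architecture is the one the paper intends (its proof is omitted as parallel to \cite[Theorems~2.10, 2.12 \& 2.14]{KimKwo18b}): uniqueness from Theorem~\ref{thm:comp} after localizing the supports, existence by mollifying $\Lambda$, solving the classical problem with continuous forcing, and passing to half-relaxed limits via Theorem~\ref{thm:sta} with uniform barriers. Most of the steps are fine, including the localization trick of squeezing $\Lambda$ between constants and transferring bounds back through $\{u_k>c\}\subset\{\hu_k>c\}$.

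There is, however, one concretely wrong justification: the claim that ``choosing $\Theta=\Lambda_k$ in Definition~\ref{def:visl} makes the convolution radius identically $0$, so $u_k$ is also a viscosity solution of \eqref{main} with $\Lambda=\Lambda_k$.'' Definition~\ref{def:visl} is universally quantified: $u_k$ is a subsolution only if for \emph{every} $\Theta\in C([t_1,t_2])\cap C^1((t_1,t_2))$ with $\Theta>\Lambda_k$ the sup-convolution $\hu_k(\cdot;\Theta-\Lambda_k)$ is a subsolution of \eqref{eqn:the2} with forcing $\Theta'$; the choice $\Theta=\Lambda_k$ is not even admissible (strict inequality is required), and checking a single degenerate choice proves nothing. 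The consistency of the classical notion (Definition~\ref{def:visd} for continuous $\lambda_k=\Lambda_k'$) with Definition~\ref{def:visl} is a genuine step, and it is exactly what Lemma~\ref{le:wulff-sup-conv} supplies: applying it with $R:=\Theta-\Lambda_k$ shows that the Wulff sup-convolution of $u_k$ is a subsolution with forcing $\lambda_k+R'=\Theta'$, for every admissible $\Theta$ (and symmetrically for the inf-convolution). With that substitution your existence step is sound. I would also point out that ``pinning the same barriers at $t=0$'' is too terse as written: the constant-$\Theta$/Wulff barriers give sup bounds and spatial confinement but not attainment of the initial data by $\limsups u_k$ and $\liminfs u_k$; you need $k$-uniform barriers of the type constructed in Lemma~\ref{le:crystalline-half-lim-conincidence} (cones/Wulff profiles of the form $M_\phi\sqrt{\phi^\circ(-x)^2+Ct}$ plus a term controlled by $\|\Lambda_k-\Lambda\|_\infty$), which is standard but should be said.
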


\section{Preservation of the reflection property}
\label{sec:pre}

Here we show that the curvature flow with fixed forcing \eqref{main} preserves \eqref{eqn:1as} for smooth $\phi$, when $\phi$ and $\psi$ are invariant with respect to reflections given by elements of $\P$, namely when they satisfy
\begin{align}
\label{eqn:rt}
\phi = \phi \circ \Psi_p \hbox{ and } \psi = \psi \circ \Psi_p \quad \hbox{ for any } p \in \P.
\end{align}

\medskip

Throughout this section we fix $a < 0$. For a given bounded domain $\oz$, consider $u_0 \in C_a(\Rn)$  given by
\begin{align}
\label{initial_a}
u_0(x) := \max \{ -\sd (x, \oz) , a \} \hbox{ for all } x \in \Rn.
\end{align}
Here $\sd$ denotes the signed distance function, namely
\begin{align*}
\sd (x, \Om) := 
\begin{cases}
- \dist(x,\partial \Om) &\hbox{ if } x \in \Om,\\
\dist(x,\partial \Om) &\hbox{ if } x \in \Om^\mathsf{c}.
\end{cases}
\end{align*}

\begin{prop}
\label{prop:ref}
Assume that $\phi \in C^2(\R^N \setminus \{0\})$. Let $u\in C_a(\overline{Q_T})$ be a viscosity solution of \eqref{main} with initial data $u_0$. If $\oz$ satisfies \eqref{eqn:1as}, then so does $\ot := \{ u(\cdot,t) > 0 \}$  for all $t \in [0,T]$.
\end{prop}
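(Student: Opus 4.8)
The plan is to prove preservation of the reflection property \eqref{eqn:1as} one hyperplane at a time, reducing the statement to a comparison argument between $u$ and a reflected copy of itself. Fix a half-space $H = \Pi^-_p(s)$ with $p \in \P$ and $s > \rho$, and set $\Psi := \Psi_{\Pi_p(s)}$. Define the reflected function $v(x,t) := u(\Psi(x), t)$ on the slab $\{x \cdot p > s\} \times [0,T]$ (equivalently $v = u \circ \Psi$), and let $w := \min\{u, v\}$ or, more precisely, compare $u$ with $v$ on the half-space $\Pi^+_p(s)$. The key observations are: (i) because $\phi$ and $\psi$ are invariant under $\Psi_p$ by \eqref{eqn:rt}, and the reflection $\Psi$ differs from $\Psi_p$ only by a translation (which the equation is invariant under), the reflected function $v$ is again a viscosity solution of \eqref{main} with the same forcing $\Lambda$ — this uses that the level-set operator $F(t,p,X) = \psi(-p)(\trace(D^2_p\phi(-p)X) + \Lambda'(t))$ is invariant under $p \mapsto \Psi_p p$, $X \mapsto \Psi_p X \Psi_p$; (ii) the reflection symmetry of the anisotropies also forces the sup-/inf-convolution Wulff shape $W_\psi$ to be $\Psi_p$-invariant, so the convolution-based Definition~\ref{def:visl} is compatible with reflection and $v$ genuinely inherits the viscosity solution property in the sense of that definition.

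With $v$ a viscosity solution on the slab, I would next check the boundary data for the comparison principle (Theorem~\ref{thm:comp}) applied on the bounded region $U_T$ with $U = B_R(0) \cap \Pi^+_p(s)$ for large $R$. On the flat face $\partial U \cap \Pi_p(s)$ we have $\Psi = \mathrm{id}$, so $u = v$ there; on the spherical part $\partial B_R(0)$, for $R$ large both $u$ and $v$ equal $a$ since $u_0 \in C_a(\Rn)$ and the flow has finite propagation speed (alternatively, take $U$ bounded and use that $\{u(\cdot,t)>0\}$ stays in a fixed large ball, which follows from comparison with a large stationary Wulff shape); at $t = 0$ the hypothesis that $\oz$ satisfies \eqref{eqn:1as}, i.e. $\Psi(\oz) \cap \Pi^-_p(s) \subset \oz \cap \Pi^-_p(s)$, translates — via the specific choice \eqref{initial_a} of $u_0$ as a truncated signed distance function, which is monotone in $\oz$ and respects reflection — into $v(\cdot,0) \le u(\cdot,0)$ on $\Pi^+_p(s)$. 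Here one must be slightly careful: $\sd(\cdot,\oz)$ is not literally order-reversing under inclusion on all of $\Rn$, but on the half-space $\Pi^+_p(s)$ the inequality $\Psi(\oz)\cap\Pi^-_p(s)\subset\oz$ does give $\dist(x,\partial\oz)\ge\dist(x,\partial\Psi(\oz)) = \dist(\Psi(x),\partial\oz)$ for the relevant sign configuration, hence $u_0(x) \ge u_0(\Psi(x)) = v(x,0)$; the truncation at $a$ is harmless. Then Theorem~\ref{thm:comp} yields $v \le u$ throughout $U_T$, hence on all of $\Pi^+_p(s) \times [0,T]$, which says exactly $u(\Psi(x),t) \le u(x,t)$ for $x \cdot p > s$. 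Taking super-level sets gives $\Psi(\ot) \cap \Pi^-_p(s) \subset \ot \cap \Pi^-_p(s)$, i.e. \eqref{eqn:rp}, for this $p$ and $s$; since $p \in \P$ and $s > \rho$ were arbitrary, $\ot$ satisfies \eqref{eqn:1as}.

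The main obstacle I anticipate is step (i)–(ii): verifying cleanly that the reflected function is a viscosity solution in the precise sense of Definition~\ref{def:visl}, since that definition routes everything through sup-/inf-convolutions over $W_\psi$ and through the auxiliary functions $\Theta$. One has to confirm that $\widehat{u \circ \Psi}(\cdot;\Theta - \Lambda) = \widehat{u}(\cdot;\Theta-\Lambda) \circ \Psi$, which is where $\Psi_p$-invariance of $W_\psi$ (equivalently of $\psi$, from \eqref{eqn:rt}) is essential, and then that the classical equation \eqref{eqn:the2} is invariant under the affine reflection $\Psi$ — a direct computation with the chain rule using $\Psi = -$ reflection plus translation, $D(u\circ\Psi) = \Psi_p^T (Du)\circ\Psi$, $D^2(u\circ\Psi) = \Psi_p^T((D^2u)\circ\Psi)\Psi_p$, together with $\phi\circ\Psi_p = \phi$ so that $D\phi(\Psi_p q) = \Psi_p D\phi(q)$ and $D^2\phi(\Psi_p q) = \Psi_p D^2\phi(q)\Psi_p$. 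Once this invariance is in hand, the comparison step is routine. A secondary technical point is the finite-speed-of-propagation / boundedness claim needed to apply the comparison principle on a bounded cylinder with the correct boundary ordering; this follows from comparing $\ott$ from above with a shrinking rescaled Wulff shape and from below with a fixed small Wulff shape inside $B_\rho(0) \subset \oz$, both of which are barriers for \eqref{main} with the given (continuous, after the $\Theta$-reduction) forcing.
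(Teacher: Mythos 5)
Your overall strategy is the same as the paper's: reflect the solution by $v := u\circ\Psi_{\Pi_p(s)}$, use \eqref{eqn:rt} to check that the level-set equation (and, via $\Psi_p$-invariance of $W_\psi$, the sup/inf-convolution Definition~\ref{def:visl}) is invariant under the reflection, and then apply the comparison principle Theorem~\ref{thm:comp} on a bounded cylinder, using $u=v$ on the reflecting hyperplane, $u=v=a$ far away, and the initial ordering coming from \eqref{eqn:1as}. However, you run the comparison on the wrong half-space, and this breaks the argument at two points. First, the initial ordering you assert, $v(\cdot,0)\le u(\cdot,0)$ on $\Pi^+_p(s)$, is false in general: the hypothesis $\Psi(\oz)\cap\Pi^-_p(s)\subset\oz$ is equivalent to $\oz\cap\Pi^+_p(s)\subset\Psi(\oz)$, i.e.\ on the far side $\oz$ is \emph{contained in} its reflected copy, which pushes the inequality the other way ($u_0\le v_0$ there). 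Concretely, take $\oz=B_{10}(0)$, $p=e_1$, $s=2$, $x=(11,0,\dots,0)$: then $u_0(x)<0$ while $v_0(x)=u_0(\Psi(x))>0$. Second, even if one granted $v\le u$ on $\Pi^+_p(s)\times[0,T]$, taking super-level sets yields $\Psi(\ot)\cap\Pi^+_p(s)\subset\ot$, which is equivalent to $\ot\cap\Pi^-_p(s)\subset\Psi(\ot)$ --- the \emph{reverse} of \eqref{eqn:rp} --- so your final deduction of $\Psi(\ot)\cap\Pi^-_p(s)\subset\ot\cap\Pi^-_p(s)$ does not follow.

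The fix is to compare on $\Pi^-_p(s)$ (the half-space containing $B_\rho(0)$), which is what the paper does: \eqref{eqn:1as} for $\oz$ gives $v_0\le u_0$ on $\Pi^-_p(s)$ for the truncated signed distance \eqref{initial_a} (your caution about signed distances is warranted, but the correct inequality can be checked case by case, e.g.\ using the interior-ball reflection as in Lemma~\ref{lem:br} for points inside $\oz$ and the elementary fact that points on the same side of $\Pi_p(s)$ are closer to each other than to their reflections for points outside), $u=v$ on $\Pi_p(s)\times[0,T]$ and $u=v=a$ outside a bounded set, so Theorem~\ref{thm:comp} on $(U\cap\Pi^-_p(s))\times[0,T]$ gives $v\le u$ on $\Pi^-_p(s)\times[0,T]$, which is exactly \eqref{eqn:rp} for $\ot$. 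With that change of half-space (and the corrected direction of the initial inequality), your argument coincides with the paper's proof; your observation that one must check compatibility of the reflection with the $W_\psi$-convolutions in Definition~\ref{def:visl} is a legitimate point that the paper treats only implicitly.
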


\begin{proof}
 For any fixed $p \in \P$ and $s \in \R$, we claim that
\begin{align*}
\hbox{ $v(x,t):= u(\Psi_{\Pi_p(s)}(x),t)$ is a viscosity solution of \eqref{main} }
\end{align*}
with the initial data $v_0(x) := u_0(\Psi_{\Pi_p(s)}(x))$. To verify the viscosity solution condition, we only need to show that sub- and supersolution property is preserved for reflections of test functions. Therefore it is enough to verify the invariance of \eqref{main} under reflection for functions $u\in C^2(Q_T)$ and $\Lam \in C^1(\Rpz)$, at points $(x,t) \in Q_T$ where $Dv(x,t) \neq 0$. First note that, identifying the linear operator as its gradient, $D\Psi_{\Pi_p(s)}(x) = I - 2p\otimes p =: \Psi_{\Pi_p(0)} =: R$, which is a unitary symmetric matrix, and $R^2 = I$. Therefore
\begin{align*}
Dv(x,t) = R Du(\Psi_{\Pi_p(s)}(x), t)
\end{align*}
and both sides have the same norm. Moreover, $\psi(-Dv(x,t)) = \psi(-Du(\Psi_{\Pi_p(s)}(x), t))$ since $\psi = \psi \circ R$ by assumption.

Finally, differentiating $\phi(x) = \phi(Rx)$ twice we obtain $R D^2\phi(x)R = D^2\phi(Rx)$. A similar computation yields $D^2 v(x, t) = R D^2 u(\Psi_{\Pi_p(s)}(x), t) R$. This implies
\begin{align*}
\dv (D\phi(-Dv(x,t))) &= \trace (-D^2\phi(-Dv(x,t)) D^2 v(x,t)) \\&= \trace (-RD^2\phi(-Du(\Psi_{\Pi_p(s)}(x), t)) R R D^2u(\Psi_{\Pi_p(s)}(x), t) R) \\&= \trace (-D^2\phi(-Du(\Psi_{\Pi_p(s)}(x), t)) D^2u(\Psi_{\Pi_p(s)}(x), t)).
\end{align*}
We conclude that all terms in \eqref{main} are invariant with respect to the reflection $\Psi_{\Pi_p(s)}$ for test functions, and hence $v$ is a viscosity solution whenever $u$ is.

\medskip

Let us now fix $p \in \P$ and  $s  >\rho$.
As $\oz$ satisfies \eqref{eqn:1as}, $u_0$ given in \eqref{initial_a} satisfies 
\begin{align*}
v_0(x) := u_0(\Psi_{\Pi_p(s)}(x)) \leq u_0(x) \qquad \hbox{for all } x \in \Pi_p^-(s).
\end{align*}
By definition $v = u$ on $\Pi_p(s) \times [0, T]$.
As $u \in C_a(Q_T)$, there exists a bounded domain $U$ in $\Rn$ such that $u = v = a$ in $U^\mathsf{c} \times [0,T]$. Applying the comparison principle Theorem~\ref{thm:comp} in $(U \cap \Pi_p^-(s)) \times [0,T]$, we conclude that
\begin{align*}
v \leq u \qquad \hbox{in } \Pi_p^-(s) \times [0,T],
\end{align*}
which implies \eqref{eqn:1as} for $\ot$.
\end{proof}

The following theorem will be used in the next section to guarantee \eqref{eqn:1as} for the discrete-time flow that approximates \eqref{model}. Let us define $\K>0$ by the $\sigma_i$'s given in Section~\ref{sec:geo}:
\begin{align}
\label{eqn:k0}
\K = \K(\P) := \sigma_1^{-1} \sigma_2 \big( 1+ \sigma_2 \sigma_3^{-1}(1 + \sigma_1^{-1})\big).
\end{align}

\begin{thm}
\label{thm:pre}
For a given $\phi \in C^2(\R^N \setminus \{0\})$, let $u \in C_a(\overline{Q_T})$ be a viscosity solution of \eqref{main} with initial data \eqref{initial_a}, where $\oz$ satisfies \eqref{eqn:1as}. Suppose that
\begin{align}
\label{eqn:1pre}
|\ot| > |B_{\K \rho}| + \e \quad \hbox{ for some } \e>0 \hbox{ and } t\geq0 \hbox{ where } \ot:= \{ x \in \R^N : u(x,t) > 0\}.
\end{align}
Then, $\ot$ satisfies \eqref{eqn:1as} and contains $\B_{r}$ given in \eqref{eqn:b} for some $r = r(\P, \e)>0$.
\end{thm}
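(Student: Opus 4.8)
The first assertion needs no new work: $\oz$ satisfies \eqref{eqn:1as}, so Proposition~\ref{prop:ref} (recall $\phi,\psi$ obey \eqref{eqn:rt} throughout this section) gives that $\ot=\{u(\cdot,t)>0\}$ satisfies \eqref{eqn:1as} with the same $\rho$ for every $t\in[0,T]$, in particular for the $t$ appearing in \eqref{eqn:1pre}. The plan for the second assertion is to feed $\ot$ into the in-radius bound of Theorem~\ref{thm:bal}; the only input one must supply is a lower bound on the out-radius of $\ot$, and this is exactly what the volume hypothesis \eqref{eqn:1pre} delivers. Throughout, Lemma~\ref{lem:rv} guarantees that all the constants $\sigma_i=\sigma_i(\P)$ are positive and finite, so the quantities below make sense.

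First I would fix
\[
R=R(\P,\e):=\Big((\K\rho)^N+\e/|B_1(0)|\Big)^{1/N},
\]
so that $|B_R(0)|=|B_{\K\rho}(0)|+\e$ and, since $\e>0$, $R>\K\rho$. As $1+\sigma_1^{-1}>1$,
\[
\K\rho=\sigma_1^{-1}\sigma_2\big(1+\sigma_2\sigma_3^{-1}(1+\sigma_1^{-1})\big)\rho>\sigma_1^{-1}\sigma_2(1+\sigma_2\sigma_3^{-1})\rho,
\]
so $R$ exceeds the threshold required in Theorem~\ref{thm:bal}. On the other hand \eqref{eqn:1pre} gives $|\ot|>|B_{\K\rho}(0)|+\e=|B_R(0)|$, and monotonicity of Lebesgue measure then forces $\ot\nsubseteq B_R(0)$.

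Next I would invoke Theorem~\ref{thm:bal} with $\Om=\ot$ (which satisfies \eqref{eqn:1as}) and this $R$: it yields $c>0$ with $B_c(0)\subset\ot$, where $c$ is determined by $R=\sigma_1^{-1}\sigma_2\big(\rho+(\sigma_2\rho+c)\sigma_3^{-1}\big)$, i.e. $c=\sigma_1\sigma_2^{-1}\sigma_3R-(\sigma_2+\sigma_3)\rho$. The one computation with any content — and it is precisely the identity motivating the definition \eqref{eqn:k0} of $\K$ — is that $c>\sigma_1^{-1}\sigma_2\rho$; unwinding this, it is equivalent to $R>\sigma_1^{-1}\sigma_2\big(1+\sigma_2\sigma_3^{-1}(1+\sigma_1^{-1})\big)\rho=\K\rho$, which we have. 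Hence, setting $r:=\tfrac12\big(\sigma_1\sigma_2^{-1}c-\rho\big)>0$, one checks $\sigma_1^{-1}\sigma_2(\rho+2r)=c$, so that
\[
\B_r=\sigma_1^{-1}\sigma_2(\rho+2r)B_1(0)=B_c(0)\subset\ot .
\]
Since $c$, and therefore $r$, is built only from $\P$, $\e$ and $\rho$, it is in particular independent of $t$ and of $u$.

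In summary, the theorem reduces, via Proposition~\ref{prop:ref} and Theorem~\ref{thm:bal}, to the elementary calibration $R>\K\rho\iff c>\sigma_1^{-1}\sigma_2\rho$. The genuinely geometric work has already been done in those two results, so I do not expect any real obstacle here; the only care required is in tracking the constants $\sigma_i$ through the arithmetic so that the thresholds in \eqref{eqn:1pre}, \eqref{eqn:k0} and Theorem~\ref{thm:bal} line up.
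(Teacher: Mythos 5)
Your proposal is correct and follows essentially the same route as the paper: Proposition~\ref{prop:ref} handles the reflection property, the volume hypothesis rules out containment in a ball of radius exceeding $\K\rho$, and Theorem~\ref{thm:bal} together with the calibration $R(\sigma_1^{-1}\sigma_2\rho)=\K\rho$ built into \eqref{eqn:k0} yields $\B_r\subset\ot$. Your only deviation is cosmetic — you compute $R$, $c$ and $r$ explicitly where the paper introduces an abstract margin $\e_1>0$ — and your explicit tracking of the (harmless) $\rho$-dependence of $r$ is if anything slightly more careful than the paper's stated dependence $r=r(\P,\e)$.
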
 

\begin{proof}
Due to Proposition~\ref{prop:ref} we only need to check that $\ot$ contains $\B_r$. For $R(\cdot)$ given in Theorem~\ref{thm:bal}, observe that from the definition of $\K$
\begin{align*}
R(\sigma^{-1} \sigma_2 \rho) = \sigma_1^{-1} \sigma_2 \big( \rho +  (\sigma_2 \rho  +  \sigma^{-1} \sigma_2 \rho) \sigma_3^{-1} \big) = \K \rho.
\end{align*}
From this and \eqref{eqn:1pre} there exists $\e_1 = \e_1 (\P, \e)>0$ such that
\begin{align*}
\ot \nsubseteq B_{R(\sigma_1^{-1} \sigma_2 \rho) + \e_1}(0).
\end{align*}
Since $\ot$ satisfies \eqref{eqn:1as},  Theorem~\ref{thm:bal}  yields $r = r(\P, \e)>0$ such that
\begin{align}
\label{eqn:pre11}
\B_r = B_{\sigma^{-1} \sigma_2 (\rho + 2r)}(0)  \subset \ot.
\end{align}
\end{proof}

\section{The discrete \texorpdfstring{$\lambda$}{lambda} scheme}
\label{sec:lam}
In this section we introduce an explicit way to construct the flow for \eqref{model} by approximation, where the approximate $\lambda$ is given as a piecewise constant function of time. As in Section~\ref{sec:pre}, we rely on the level set approach to describe the approximate flow. 

\medskip

 For a given bounded open $E \subset \Rn$, $a<0$ and $h>0$, let $u(\cdot; E, h) \in C_a(\overline{Q_h})$ be the unique viscosity solution of $V = \psi(\n)(-\k_\phi + \lambda(E, h))$ with initial data $u_0 \in C_a(\Rn)$ given by
\begin{align}
\label{freezing-init-data}
u_0(x) := \max \{ -\sd (x, E) , a \},
\end{align}
where the  forcing term is defined with a constant $\alpha>0$:
\begin{align}
\label{eqn:lam}
\lambda(E, h) := \frac{\alpha \sign(|\oz| - |E|)}{\sqrt{h}}.
\end{align}
Here $\sign(c)$ denotes the sign of $c$, set  to zero if $c$ equals zero. Note that the well-posedness of $u$ follows from Theorem~\ref{thm:vex}.  We then define the corresponding set evolution
\begin{align}
\label{eqn:tt}
\T_t(E;h) := \{ x \in \Rn : u(x,t; E,h) > 0 \}, \qquad t \in [0, h].
\end{align}

\medskip


Iterating this process over the time intervals $[kh, (k+1)h]$ for $k\in \N$, we define the evolution 
\begin{align}\label{eqn:et}
E_t(h) := \T_{t - h \lfloor\frac{t}{h}\rfloor} \left( (\T_h)^{\lfloor\frac{t}{h}\rfloor}(\oz;h); h \right), \,\,\lh(t) := \lambda(E_{\lfloor t/h \rfloor h}(h), h) \,\,\hbox{ and } \Lh(t):= \int_0^t \lh(s)ds.
\end{align}
 Here $\T_h^{m}$ denotes the $m$-th functional power of $\T_h(\cdot; h)$. 

\medskip

In the remainder of the paper we will study the properties of the $h$-flow $E_t(h)$ and its limit as $h$ tends to zero. The goal is to show that the limit  is a solution of the volume-preserving flow \eqref{model}. First we show that the $h$-flow is a viscosity solution with the corresponding forcing.

\begin{lem}
\label{lem:glue}
Let $E_t(h)$ and $\lh$ be given in \eqref{eqn:et}. For $h>0$, $E_t(h)$ is a viscosity solution of $V = \psi(\n)(-\kp + \lh)$.
\end{lem}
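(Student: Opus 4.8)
\textbf{Proof proposal for Lemma~\ref{lem:glue}.}

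The plan is to verify the viscosity subsolution and supersolution properties of $E_t(h)$ in the sense of Definition~\ref{def:visl} by reducing everything to the already-established well-posedness (Theorem~\ref{thm:vex}) on each sub-interval $[kh,(k+1)h]$, together with a gluing argument at the discrete times $t=kh$. Since on each interval $[kh,(k+1)h]$ the forcing $\lh$ is constant, the level-set function $u$ restricted to that interval solves \eqref{eqn:the} with continuous (indeed constant) $\lambda$, so it is a viscosity solution in the classical sense of Definition~\ref{def:visd}, and hence in particular a viscosity solution of \eqref{main} on that interval with $\Lambda = \Lh$ (the distributional formulation reduces to the classical one when $\Lambda \in C^1$). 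The only issue is continuity of the evolution across the endpoints $t = kh$: this holds because the initial data for the $(k+1)$-st step is built from the signed distance function of $(\T_h)^{k}(\oz;h)$, which has the same zero super-level set as $u(\cdot, kh)$ — both equal $E_{kh}(h)$ — up to the truncation at level $a<0$.

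First I would fix $h>0$ and show that the function $U: Q \to \R$ obtained by concatenating the pieces $u(\cdot; (\T_h)^k(\oz;h), h)$ on $[kh,(k+1)h]$ is well-defined and upper/lower semicontinuous in $Q$; the key point is that $\{U(\cdot,t) > 0\} = E_t(h)$ for all $t \geq 0$ by construction of \eqref{eqn:et}. Then I would check the subsolution condition: given $0 \le t_1 < t_2$ and $\Theta \in C([t_1,t_2]) \cap C^1((t_1,t_2))$ with $\Theta > \Lh$ on $[t_1,t_2]$, I need $\hU = \widehat{U}(\cdot; \Theta - \Lh)$ to be a viscosity subsolution of \eqref{eqn:the2}. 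Since $\Theta - \Lh$ is continuous and nonnegative, the sup-convolution over $W_\psi$ is well-defined; by Lemma~\ref{lem:conv} (applied on each sub-interval where $\Lh' = \lh$ is constant and $U$ is a genuine viscosity solution, hence subsolution), $\widehat{E_t(h)}(\gamma)$ with $\gamma = \Theta - \Lh$ is a subsolution of $V = \psi(\n)(-\kp + \lh + \gamma')= \psi(\n)(-\kp + \Theta')$ on the interior of each $(kh,(k+1)h)$. It remains to propagate this across the nodes $t = kh$, which follows from the comparison principle Theorem~\ref{thm:comp} combined with the stability result Theorem~\ref{thm:sta}: the upper semicontinuous envelope of the concatenation is still a subsolution because a subsolution property on $(t_1,t_2) \setminus \{kh : k \in \N\}$ that is preserved under half-relaxed limits extends across the finitely many nodes in any compact subinterval. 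The supersolution part is symmetric, using inf-convolutions and the supersolution half of Lemma~\ref{lem:conv}.

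The main obstacle I anticipate is the gluing at the discrete times $t=kh$: one must ensure that no spurious behavior is introduced at these instants, i.e. that the freshly re-initialized signed-distance data $\max\{-\sd(\cdot, E_{kh}(h)), a\}$ leads to an evolution that matches $U$ from the left in the viscosity sense. This is exactly the content of the standard observation that, for the level-set flow, replacing the level-set function at a time by any other function with the same super-level set (and the same truncation level $a$) does not change the set evolution, together with the fact that $\sd(\cdot, E)$ is continuous so the new data is in $C_a(\Rn)$ — this is why Theorem~\ref{thm:vex} applies at each step. A secondary technical point is that $\Theta$ is only $C^1$ on the \emph{open} interval $(t_1,t_2)$, so near the endpoints one should approximate $\Theta$ by slightly larger functions $\Theta_\delta$ that are $C^1$ up to the boundary with $\Theta_\delta > \Lh$, apply the argument, and pass to the limit using Theorem~\ref{thm:sta}; this is routine and parallel to the corresponding step in \cite{KimKwo18b}.
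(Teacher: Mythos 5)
Your skeleton is the same as the paper's: on each interval $((k-1)h,kh]$ the flow comes from a genuine level-set solution with constant forcing, Lemma~\ref{lem:conv} turns the sup/inf-convolution $\hu(\cdot;\Theta-\Lh)$ into a sub/supersolution of $V=\psi(\n)(-\kp+\Theta')$ on each piece, and the only real content is crossing the nodes $t=kh$. That crossing is exactly where your argument has a gap. You assert that ``a subsolution property on $(t_1,t_2)\setminus\{kh\}$ \dots extends across the finitely many nodes'' by invoking Theorem~\ref{thm:comp} together with Theorem~\ref{thm:sta}, but neither result delivers this: Theorem~\ref{thm:sta} is a stability statement for a \emph{sequence} of subsolutions with forcings $\Lambda_k\to\Lambda$ (there is no approximating sequence in your setup), and Theorem~\ref{thm:comp} compares a subsolution against a supersolution on a fixed parabolic domain; neither says that a single function which is a subsolution on the two open time intervals is a subsolution across the interior time slice $\{t=kh\}$. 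Removability of that time slice is the entire point of the lemma and is not automatic. Likewise, your observation that re-initializing with the signed distance does not change the zero super-level set only guarantees that $E_t(h)$ is well defined and that each piece admits a level-set function; it says nothing about the viscosity property at $t=kh$. Note also that you only claim the convolved flow is a subsolution on the \emph{open} intervals $(kh,(k+1)h)$, whereas the node-crossing needs control up to and including the node from the left.

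The paper closes this with a short barrier-chaining argument that uses the specific structure of Definition~\ref{def:visd}: fix a test cylinder $D_r\subset\Rn\times(t_1,t_2)$ (with $t_1<h<t_2$, say) and a classical strict supersolution $\varphi$ of $V=\psi(\n)(-\kp+\Theta')$ with $\hu^*<\varphi$ on $\partial_p D_r$. Since the per-interval solutions live on $\Rn\times((k-1)h,kh]$, closed at the top, and the cylinders in Definition~\ref{def:visd} include their top time slice, the subsolution property of $\hu$ on $\Rn\times(t_1,h]$ already forces $\hu^*<\varphi$ on $\overline{D_r}\cap\{t\le h\}$, including $\{t=h\}$; this slice then becomes part of the parabolic boundary of $D_r\cap\{t\ge h\}$, and the subsolution property on $(h,t_2]$ yields $\hu^*<\varphi$ on $\overline{D_r}\cap\{t\ge h\}$. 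Chaining the two comparisons gives the subsolution property across the node, and the supersolution half is symmetric. Without this step (or an equivalent ``subsolution up to the terminal time slice'' lemma proved separately), your proof is incomplete.
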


\begin{proof}
By construction, $E_t(h)$ is a viscosity solution of $V = \psi(\n)(-\kp + \lh)$ in $\Rn\times ((k-1)h, kh]$ for all $k \in \N$. It is thus enough to show that $E_t(h)$ is a viscosity solution of $V = \psi(\n)(-\kp + \lh)$ in $\Rn\times (t_1,t_2]$ for times $t_1 = h-\e$ and $t_2=h+\e$ with $0<\e <h$. 
 
 \medskip
 
 Following Definition~\ref{def:visl}, we check the viscosity subsolution property for $u(x,t) := \chi_{E_t(h)}(x)$. For any $ \Theta \in C([t_1, t_2]) \cap C^1((t_1,t_2))$ such that $\Theta > \Lambda$ in $[t_1,t_2]$, we need to verify that 
\begin{align*}
\hbox{ $\hu = \hu(\cdot ; \Theta - \Lam)$ is a viscosity subsolution of $V = \psi(\n)(-\kp + \Theta')$ in $\Rn\times (t_1,t_2]$, }
\end{align*}
where $\hu$ is given in \eqref{eqn:sup}. 
Consider a cylindrical domain $D_r \subset \Rn \times (t_1,t_2)$ and a classical strict supersolution $\varphi$ of $V = \psi(\n)(-\kp + \Theta')$ that is above $\hu^*$ on the parabolic boundary of $D_r$. In this setting we would like to show that  $\hu^* < \varphi$ in $\overline{D_r}$.

\medskip

By Lemma~\ref{lem:conv}, $\hu(\cdot ; \Theta - \Lam)$ is a viscosity subsolution $V = \psi(\n)(-\kp + \Theta')$ in $\Rn\times (t_1,h]$ and $\Rn\times (h,t_2]$.  Since $\varphi$ is above $\hu^*$ on the parabolic boundary of $D_r \cap \{ t\leq h\}$, it follows that 
$$
\hu^* < \varphi \hbox{ in } \overline{D_r} \cap \{t\leq h\}.
$$
From the above inequality, we now have $\varphi$ above $\hu^*$ on the parabolic boundary of $D_r \cap \{t\geq h\}$. Thus we conclude that 
$$
\hu^* < \varphi \hbox{ in } \overline{D_r}\cap \{t\geq h\}.
$$
Thus we have shown the subsolution property for $u$.

\medskip

Parallel arguments yield that $u$ is a viscosity supersolution of $V = \psi(\n)(-\kp + \lh)$.
\end{proof}

\medskip

 Next we show that the $h$-flow keeps its volume close to that of the initial set $|\Omega_0|$ if the constant $\alpha$ in the forcing term \eqref{eqn:lam} is sufficiently large (Theorem~\ref{thm:ve}). More precisely we require that
\begin{equation}\label{eqn:al}
\alpha = \max\left\{\frac{M_\phi}{\sigma_3 m_{\psi}}, 1 \right\} \left(\frac{N-1}{m_{\phi/\psi}}\right)^{\frac{1}{2}},
\end{equation}
where 
\begin{align}
\label{eqn:mphi}
M_{f} := \sup_{\nu \in \S^{N-1}} f(\nu) \qquad \hbox{and}\qquad m_{f}:= \inf_{\nu \in \S^{N-1}} f(\nu).
\end{align}

\begin{thm}
\label{thm:ve}
Let $\phi \in C^2(\R^N \setminus \{0\})$. Suppose that $\oz$ satisfies \eqref{volume} and \eqref{eqn:1as}. Then there are constants $C=C(\P, M_\phi, m_{\phi/\psi}, m_\psi)>0$, $r=r(\P, |\Omega_0|)>0$ and $h_0=h_0(\P, M_\phi, m_{\phi/\psi}, m_\psi, |\Omega_0|)$ such that the following holds for all $h\in (0,h_0)$ and $t\geq 0$:
\begin{equation*}
E_t(h) \hbox{ satisfies }\eqref{eqn:1as}, \quad  \B_r \subset E_t(h), \hbox{ and } \big| |E_t(h)| - |\oz| \big| < C \sqrt{h}.
\end{equation*}
\end{thm}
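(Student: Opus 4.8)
The plan is to run an induction on the discrete time steps $k$, proving simultaneously that (i) $E_{kh}(h)$ satisfies \eqref{eqn:1as}, (ii) $\B_r \subset E_{kh}(h)$ for a radius $r$ depending only on $\P$ and a lower bound on the volume, and (iii) $\big||E_{kh}(h)| - |\oz|\big|$ stays within $C\sqrt h$. The preservation of \eqref{eqn:1as} across a single step is immediate from Proposition~\ref{prop:ref} applied to the frozen-forcing flow $u(\cdot;E,h)$, since the forcing $\lambda(E,h)$ in \eqref{eqn:lam} is constant in time and $\phi,\psi$ satisfy \eqref{eqn:rt}; iterating gives (i) for all $t$, not just $t=kh$, because the reflection property is preserved throughout each sub-interval. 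Given (i) and a volume lower bound, Theorem~\ref{thm:pre} (or directly Theorem~\ref{thm:bal} with \eqref{eqn:k0}) furnishes (ii): as long as $|E_t(h)| > |B_{\K\rho}| + \e_0$ for a fixed $\e_0$, we get $\B_r \subset E_t(h)$ with $r=r(\P,\e_0)$. So the crux is the volume estimate (iii), which also feeds back to keep $|E_t(h)|$ above $|B_{\K\rho}|+\e_0$ using \eqref{volume}, provided $C\sqrt{h_0}$ is smaller than the gap $|\oz| - \K^N\rho^N|B_1|$.

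For the one-step volume change, fix an interval $[kh,(k+1)h]$ and set $\lambda_k := \lambda(E_{kh}(h),h) = \pm\alpha/\sqrt h$ (say $E_{kh}(h)$ has volume $<|\oz|$, so $\lambda_k = \alpha/\sqrt h > 0$; the other sign is symmetric). The flow over this interval is $V = \psi(\n)(-\kp + \lambda_k)$. I would bound the motion from below and above by barriers. For the lower bound on the new volume: since $E_{kh}(h)$ contains $\B_r$ and satisfies the cone property from Theorem~\ref{thm:lpb}, one can fit a Wulff-shape-type subsolution. More simply, using Lemma~\ref{lem:conv}: the flow $E_{kh}(h)$ is (trivially, as a stationary set it is a supersolution of $V=\psi(\n)(-\kp)$ only after accounting for curvature, so instead) — the clean approach is to note that the constant-forcing flow with $\lambda_k>0$ has normal velocity bounded below by $\psi(\n)(-\kp+\lambda_k)$, and on the set satisfying \eqref{eqn:1as} with $\B_r$ inside, $\kp$ is controlled: the anisotropic curvature of a Lipschitz domain with the cone property, once we have the comparison with a large Wulff shape, satisfies $\kp \le (N-1)M_{\phi}/(\sigma_3 r)$-type bounds on the relevant region, so $-\kp + \lambda_k \ge \lambda_k/2$ once $\alpha$ is large as in \eqref{eqn:al} and $h$ is small. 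Then the set grows, so $|E_{(k+1)h}(h)| \ge |E_{kh}(h)|$ up to a controlled error; more precisely, using a Wulff-shape sup-convolution barrier expanding at speed $\sim \lambda_k\psi(\n) = \alpha\psi(\n)/\sqrt h$ over time $h$, the volume increases by at least $c\,\alpha\sqrt h\,\per_\phi(E_{kh})/(\text{something})$, which exceeds the deficit correction. For the upper bound, when the set is \emph{larger} than $\oz$ the forcing is negative and a shrinking Wulff barrier gives the reverse inequality.

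The accounting then runs as follows: define $d_k := |E_{kh}(h)| - |\oz|$. The one-step analysis shows $d_{k+1} = d_k - c_k\sqrt h + O(h)$ where $c_k > 0$ is bounded below whenever $|d_k|$ exceeds $C_1\sqrt h$ (the forcing is strong enough to overcome the curvature drift in the "wrong" direction, and the barrier speed is $\sim \alpha/\sqrt h$ giving a volume change of order $\sqrt h$ per step of length $h$), and $|d_{k+1} - d_k| \le C_2\sqrt h$ always. A standard discrete argument then shows $|d_k| \le C\sqrt h$ for all $k$ once it holds initially (it does, as $d_0 = 0$): if $|d_k| \le C\sqrt h$ then either $|d_k| \le C_1\sqrt h$ and $|d_{k+1}| \le (C_1 + C_2)\sqrt h \le C\sqrt h$ for $C$ chosen large, or $C_1\sqrt h < |d_k| \le C\sqrt h$ and the strict decrease $|d_{k+1}| \le |d_k| - c_k\sqrt h + O(h) < |d_k| \le C\sqrt h$. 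The main obstacle I anticipate is making the one-step curvature bound rigorous and \emph{uniform in $k$}: one needs $\kp$ on $\partial E_{kh}(h)$ (or rather, the effect of curvature on the volume over one step) controlled purely in terms of $\P$, $M_\phi$, $m_\psi$, $m_{\phi/\psi}$ and $|\oz|$, independently of how irregular $E_{kh}(h)$ has become — and this is exactly where the cone property from Theorem~\ref{thm:lpb} and the in-radius bound $\B_r \subset E_{kh}(h)$ from Theorem~\ref{thm:bal} do the essential work, since they give uniform geometric control (bounded perimeter-to-volume ratio, uniform Lipschitz constant) on the whole family of discrete sets. Comparison must be done via the viscosity framework of Section~\ref{sec:not} (Theorem~\ref{thm:comp} and Lemma~\ref{lem:conv}) rather than classically, since $\partial E_{kh}(h)$ is only Lipschitz.
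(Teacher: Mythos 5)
Your overall architecture is the same as the paper's: induct over the time steps, preserve \eqref{eqn:1as} on each frozen-forcing subinterval via Proposition~\ref{prop:ref}, recover the inradius $\B_r$ from the volume lower bound via Theorem~\ref{thm:pre} (i.e.\ Theorem~\ref{thm:bal}), and close the loop with a one-step volume estimate. The gaps are in the two one-step estimates, which are exactly where the paper's work lies. First, your mechanism for the ``monotone'' step (the set grows when $|E|<|\oz|$, Proposition~\ref{prop:vol} in the paper) rests on a pointwise bound of the form $\kp\le (N-1)M_\phi/(\sigma_3 r)$ on $\partial E_{kh}(h)$ so that $-\kp+\lambda_k\ge \lambda_k/2$. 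This is not available: the discrete sets are only Lipschitz (Corollary~\ref{cor:lu}), and $\kp$ is unbounded at corners, so no such uniform curvature bound holds and ``the set grows pointwise'' cannot be argued this way. The paper avoids any curvature bound: at each $x\in E$ the interior cone from Theorem~\ref{thm:lpb} contains a ball of radius $s\sigma_3$ at depth $s\sim\sqrt h$, hence a Wulff shape $x_s+2\delta_s W_\phi$ with $\delta_s\sim\sqrt h$; Lemma~\ref{lem:wulff-bound} then gives a shrinking-Wulff subsolution which survives for time $h=\tau_{\delta_s}$, while the forcing $\Lambda^h(h)=\alpha\sqrt h$ (with $\alpha$ as in \eqref{eqn:al}) pushes it back out far enough to cover $x$, yielding $E\subset\T_h(E;h)$. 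You flag this as the anticipated obstacle, but the step you sketch would fail as written.

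Second, your bound $|d_{k+1}-d_k|\le C_2\sqrt h$ is asserted from an $O(\sqrt h)$ displacement of the boundary, but converting Hausdorff displacement into a volume bound requires a uniform estimate on the measure of the $s$-neighborhood of $\partial E$, i.e.\ a uniform perimeter bound and uniform density coming from the cone property (Proposition~\ref{prop:cct} combined with Proposition~\ref{prop:dis}, giving \eqref{eqn:err12}); this ingredient is missing from your outline. Finally, your discrete accounting claims a quantitative strict decrease $|d_{k+1}|\le |d_k|-c_k\sqrt h$ when $|d_k|>C_1\sqrt h$, which is both unproved (it would need a lower bound on the volume gained per step) and vulnerable to overshoot past zero unless the constants are tuned; the paper's accounting (Proposition~\ref{prop:err}) needs only the sign information $E\subset\T_h(E;h)$ (resp.\ $\supset$) together with \eqref{eqn:err12}: if $|E|<|\oz|$ then $-C\sqrt h\le |E|-|\oz|\le|\T_h(E;h)|-|\oz|<|\T_h(E;h)|-|E|\le C\sqrt h$, so the error never accumulates. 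With these three repairs (barrier argument in place of the curvature bound, the neighborhood-volume estimate, and the simpler monotonicity-based accounting, plus the final passage from $t=kh$ to all $t$ using \eqref{eqn:err12} once more) your induction closes and coincides with the paper's proof.
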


From the above theorem and Theorem~\ref{thm:bal}, it follows that $E_t(h)$ is uniformly bounded for all $t\geq 0$ and $0<h<h_0$. Based on this bound and Theorem~\ref{thm:lpb} the following is immediate:

\begin{cor}
\label{cor:lu}
Let $\phi$, $\oz$, $h_0$ and $r$ be as given in Theorem~\ref{thm:ve}. Then for $h\in (0,h_0)$ and $t\geq 0$, $E_t(h)$ is a uniformly bounded Lipschitz domain with a uniform Lipschitz constant. In particular, the cone property \eqref{eqn:lpb12} holds with $\Om = E_t(h)$ and $r = r(\P, |\Omega_0|)$.
\end{cor}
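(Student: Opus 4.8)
The plan is to read off all three conclusions from Theorem~\ref{thm:ve} together with the geometric results of Section~\ref{sec:geo}; the corollary is essentially a bookkeeping exercise. Fix $h\in(0,h_0)$ and $t\geq0$ and write $E:=E_t(h)$. By Theorem~\ref{thm:ve}, $E$ satisfies \eqref{eqn:1as} with the same $\rho$ as $\oz$, contains $\B_r$ with $r=r(\P,|\Omega_0|)$, and obeys $\big||E|-|\oz|\big|<C\sqrt{h}<C\sqrt{h_0}$. In particular $|E|<V_0:=|\oz|+C\sqrt{h_0}$, a quantity depending only on $\P$, $M_\phi$, $m_{\phi/\psi}$, $m_\psi$ and $|\Omega_0|$.

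First I would derive the uniform size bound. Suppose $E\nsubseteq B_R(0)$ for some $R$ exceeding the threshold $\sigma_1^{-1}\sigma_2(1+\sigma_2\sigma_3^{-1})\rho$ of Theorem~\ref{thm:bal}. That theorem then gives $B_{c}(0)\subset E$ with $c=c(R)$ determined by $R=\sigma_1^{-1}\sigma_2(\rho+(\sigma_2\rho+c)\sigma_3^{-1})$; solving, $c(R)$ grows linearly in $R$ with positive slope $\sigma_1\sigma_2^{-1}\sigma_3>0$, so $c(R)\to\infty$ as $R\to\infty$. Choose $R_0=R_0(\P,V_0)$ large enough that $c(R_0)^N|B_1(0)|\geq V_0$. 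Then $E\nsubseteq B_{R_0}(0)$ would force $|E|\geq|B_{c(R_0)}(0)|\geq V_0$, contradicting $|E|<V_0$. Hence $E\subset B_{R_0}(0)$ with $R_0$ depending only on the stated data, uniformly in $h\in(0,h_0)$ and $t\geq0$.

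Next I would invoke Theorem~\ref{thm:lpb} with $\Om=E$: since $E$ satisfies \eqref{eqn:1as} and contains $\B_r=\sigma_1^{-1}\sigma_2(\rho+2r)B_1(0)$, for every $x_0\in\B_r^\cc$ there is a basis $A=A(x_0)\subset\P$, depending only on $x_0$ and not on $E$ (hence not on $h$ or $t$), such that the interior/exterior cone property \eqref{eqn:lpb12} holds for all $x\in E^\cc\cap B_r(x_0)$ and $y\in E\cap B_r(x_0)$. Because $E$ is open and $\B_r\subset E$, we have $\partial E\subset\B_r^\cc$, so this is precisely the $r$-cone property at every boundary point of $E$; Theorem~\ref{thm:lpb} thus gives that $E$ is a Lipschitz domain. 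Finally, a routine computation converts the $r$-cone property into a local representation of $\partial E$ as the graph of a Lipschitz function whose slope is bounded by a quantity depending only on the aperture of the simplex $\co_r(A)$, i.e. only on $\P$ (through $\sigma_1,\sigma_2,\sigma_3$) and $r$; since there are finitely many such simplices as $A$ ranges over bases contained in the finite set $\P$, this gives a single Lipschitz constant valid for all $h\in(0,h_0)$ and $t\geq0$. The last sentence of the statement is then just the special case $\Om=E_t(h)$, $r=r(\P,|\Omega_0|)$ of Theorem~\ref{thm:lpb}.

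The only point needing any care is the last one, namely making the Lipschitz constant genuinely uniform; but this is standard: in coordinates adapted to the basis $A$, the two inclusions $y-\co_r(A)\subset E$ and $x+\co_r(A)\subset E^\cc$ near a boundary point pin $\partial E$ between two translates of a fixed cone, forcing a Lipschitz graph whose constant depends only on that cone's opening, of which there are finitely many choices. Everything else is an immediate concatenation of Theorems~\ref{thm:ve}, \ref{thm:bal} and~\ref{thm:lpb}.
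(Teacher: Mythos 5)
Your proposal is correct and follows exactly the route the paper intends: Theorem~\ref{thm:ve} supplies \eqref{eqn:1as}, $\B_r\subset E_t(h)$ and the volume bound, Theorem~\ref{thm:bal} then yields the uniform bound on $E_t(h)$ (since otherwise the set would contain a ball whose volume exceeds the nearly conserved volume), and Theorem~\ref{thm:lpb} gives the cone property with directions and constants depending only on $\P$ and $r$, hence the uniform Lipschitz regularity. This is precisely the paper's (one-sentence) argument, merely written out in detail.
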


\subsection{Proof of Theorem~\ref{thm:ve}.}

 The main ingredient in the proof is the comparison of solutions with Wulff-shape self-similar barriers, based on the geometric properties obtained in Section~\ref{sec:pre}. 
 
 \medskip
 
 First we show 
a bound on the speed of the boundary of a solution that will be used for $E_t(h)$.

\begin{lem}
\label{lem:wulff-bound}
Suppose that $\Lambda \in C([0, h])$ with $\min \Lambda \geq \Lambda(0) -q$ for some  $q \geq 0$. Let $(\Omega_t)_{t \geq 0}$ be a viscosity solution of $V = \psi (\n)(-\kappa_{\phi} + \Lambda')$, and set $m=m_{\phi/\psi}$.  For given $x_0\in \R^N$, if we have
\begin{align}
\label{initial-wulff-bound}
x_0 + \left(2\delta + \dfrac{q}{m}\right) W_{\phi} \subset\subset \Omega_0  \quad\hbox{ for some } \delta>0,
\end{align}
then, for  $\tau_\delta:=\delta^2m/(N-1)$, 
\begin{align}
\label{wulff-bound}
x_0 + \delta W_{\phi} + \big(q + \Lambda(t) - \Lambda(0)\big) W_{\psi} \subset\subset \Omega_t \quad\text{ for } 0 \leq t \leq \min\{h, \tau_\delta\}.
\end{align}

A parallel statement holds for $\Omega_0$ and $\Omega_t$ replaced by $(\Omega_0)^\cc$ and $(\Omega_t)^\cc$, respectively, but with $-W_\psi$ replacing $W_\psi$. 
\end{lem}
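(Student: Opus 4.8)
\textbf{Proof proposal for Lemma~\ref{lem:wulff-bound}.}

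The plan is to construct an explicit Wulff-shape self-similar subsolution and use the comparison principle. The natural ansatz is a shrinking Wulff shape: set $R(t)$ to be a function solving an ODE that captures the worst-case inward motion, and define a smooth (or limit-of-smooth) subsolution whose positive set is a translate of $R(t)W_\phi$, or more precisely an inner and outer Wulff layer that accounts separately for the curvature contraction (measured in $W_\phi$) and the forcing-driven motion (measured in $W_\psi$, since the normal velocity picks up the mobility factor $\psi(\n)$). Concretely, I would look for a function whose super-level set at time $t$ is $x_0 + R(t) W_\phi + g(t) W_\psi$ where $R(0) = 2\delta + q/m$, $g(0) = 0$, chosen so that this set is a subsolution of $V = \psi(\n)(-\kappa_\phi + \Lambda')$. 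The anisotropic curvature of $\partial(rW_\phi)$ scales like $(N-1)/r$ (this is the standard computation for the Wulff shape, where $\kappa_\phi$ on $\partial(rW_\phi)$ equals $(N-1)/r$), so the curvature term contributes inward normal speed $\psi(\n)(N-1)/(R(t)\,\text{(conversion)})$; choosing $R(t)$ to solve $\tfrac{d}{dt}(R^2) = -2(N-1)/m$ — i.e. $R(t)^2 = R(0)^2 - 2(N-1)t/m$ — ensures the $W_\phi$-radius stays above $\delta$ precisely up to $t = \tau_\delta = \delta^2 m/(N-1)$, using that $\phi/\psi \geq m$ converts $W_\psi$-speed to $W_\phi$-radius loss at rate at least $m$. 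For the forcing part, since $\min\Lambda \geq \Lambda(0) - q$, the term $\Lambda'$ can push the boundary inward by a total displacement controlled by $q + (\Lambda(t) - \Lambda(0)) - (\text{something})$; taking $g(t) = q + \Lambda(t) - \Lambda(0)$ (which is $\geq 0$ by hypothesis for $t$ where we track it — actually one wants $g$ to absorb the negative excursions, so $g(t) := q - (\Lambda(0) - \Lambda(t))$, nonnegative since $\Lambda(t) \geq \Lambda(0) - q$... let me instead just say $g$ is chosen to dominate the cumulative inward forcing displacement) gives a set that shrinks at least as fast as any genuine solution starting inside it.

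The key steps in order: (1) record the elementary geometric fact that for $r > 0$, $\partial(rW_\phi)$ has anisotropic mean curvature $\kappa_\phi = (N-1)/r$, and more generally that Minkowski sums $rW_\phi + sW_\psi$ with $r,s \geq 0$ interact cleanly with the flow because the normal directions match up (the outer normal at a smooth point of $rW_\phi + sW_\psi$ agrees with that of $rW_\phi$); (2) verify that $t \mapsto x_0 + R(t)W_\phi + g(t)W_\psi$, with $R, g$ as above, is a viscosity subsolution of $V = \psi(\n)(-\kappa_\phi + \Lambda')$ on $[0, \min\{h, \tau_\delta\}]$ — this is where the ODE for $R$ and the choice of $g$ are pinned down, using $\phi/\psi \geq m$; (3) check the initial inclusion: at $t=0$ the subsolution's positive set is $x_0 + (2\delta + q/m)W_\phi$, which is compactly contained in $\Omega_0$ by hypothesis \eqref{initial-wulff-bound}; (4) apply the comparison principle (Theorem~\ref{thm:comp}, localized as in the proof of Proposition~\ref{prop:ref} using that both solutions agree with $a$ outside a large ball) to conclude the subsolution stays inside $\Omega_t$; (5) at time $t$, bound $R(t) \geq \delta$ for $t \leq \tau_\delta$ and $g(t) \geq q + \Lambda(t) - \Lambda(0)$ (or exactly equal), yielding \eqref{wulff-bound}; (6) for the complementary statement, run the identical argument with an expanding Wulff shape playing the role of a supersolution for $(\Omega_t)^\cc$, noting the mobility factor now attaches $-W_\psi$ (reflecting that the complement's boundary moves in the $-\n$ direction and $\psi$ need not be even).

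The main obstacle I expect is step (2): rigorously justifying that the Minkowski-sum family $R(t)W_\phi + g(t)W_\psi$ is an admissible subsolution in the viscosity sense of Definition~\ref{def:visl}. Because $\phi$ is only $C^2$ away from the origin and $\psi$ is merely convex, the set $rW_\phi + sW_\psi$ need not have a $C^2$ boundary, so one cannot naively plug it into the PDE; the cleanest route is probably to perturb — replace $W_\psi$ by a slightly smaller smooth convex body, or approximate from inside by genuinely smooth Wulff-type barriers and pass to the limit via the stability theorem (Theorem~\ref{thm:sta}) — while keeping track of the anisotropic curvature bound $\kappa_\phi \leq (N-1)/(\text{inner }\phi\text{-radius})$ along the evolving boundary. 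A secondary subtlety is that $\Lambda$ is only continuous, not $C^1$, on $[0,h]$; but Definition~\ref{def:visl} handles exactly this by testing against $C^1$ functions $\Theta$ bracketing $\Lambda$, so the argument should go through by first proving the inclusion for $C^1$ forcings $\Theta$ with $\Theta(0) = \Lambda(0)$ and $\min\Theta \geq \Lambda(0) - q$, then taking a limit; the cumulative-displacement bookkeeping in the definition of $g$ must be arranged so it depends on $\Lambda$ only through $\min\Lambda$ and the endpoint values, which is why $q$ and $\Lambda(t) - \Lambda(0)$ appear rather than $\int |\Lambda'|$.
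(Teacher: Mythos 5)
Your overall strategy is the paper's: a shrinking Wulff barrier for the curvature part, a $W_\psi$-layer of width driven by the forcing, and comparison. But there is a genuine gap in your bookkeeping of $q$ at $t=0$, and as written your construction is internally inconsistent. The $W_\psi$-layer mechanism (which in this paper is exactly Lemma~\ref{lem:conv}: the sup-convolution of a subsolution with $\gamma(t)W_\psi$ is a subsolution with forcing increased by $\gamma'$) requires $\gamma(t)\geq 0$ for all $t$; since $\gamma'=\Lambda'$ and $\Lambda$ may dip $q$ below $\Lambda(0)$, the only admissible choice is essentially $\gamma(t)=q+\Lambda(t)-\Lambda(0)$, which has $\gamma(0)=q$, not $0$. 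You cannot simultaneously take $R(0)=2\delta+q/m$, $g(0)=0$, and $g(t)=q+\Lambda(t)-\Lambda(0)$: with $g(0)=q$ your initial set is $x_0+(2\delta+q/m)W_\phi+qW_\psi$, which is strictly larger than what hypothesis \eqref{initial-wulff-bound} places inside $\Omega_0$, so step (3) fails; while forcing $g(0)=0$ makes $g$ go negative and destroys the Minkowski-sum subsolution property. You notice this tension but explicitly leave it unresolved (``let me instead just say $g$ is chosen to dominate...''), and that unresolved point is precisely where the constant $q/m$ in \eqref{initial-wulff-bound} earns its keep. The paper's resolution: take $R(0)=2\delta$ (not $2\delta+q/m$), $R(t)=\sqrt{4\delta^2-2(N-1)t/m}$, and $\gamma(t)=q+\Lambda(t)-\Lambda(0)$; then at $t=0$ one absorbs the $qW_\psi$ layer into $(q/m)W_\phi$ via $W_\psi\subset m^{-1}W_\phi$ (Lemma~\ref{lem:bw}), so $S(0)\subset x_0+(2\delta+q/m)W_\phi\subset\subset\Omega_0$, and $R(t)\geq\delta$ for $t\leq\tau_\delta$ (in fact up to $3\tau_\delta/2$; your claim that $\delta$ is reached ``precisely'' at $\tau_\delta$ is off but harmless).

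Two further remarks. Your worry in step (2) about smoothing the Minkowski sum $R(t)W_\phi+g(t)W_\psi$ is unnecessary in this framework: the base flow $x_0+R(t)W_\phi$ is a subsolution of $V=-\psi(\n)\kappa_\phi$ (because $R(mt)W_\phi$ solves $V=-\phi(\n)\kappa_\phi$ and $\psi\, m_{\phi/\psi}\leq\phi$), and then Lemma~\ref{lem:conv} upgrades it in one stroke to a subsolution with forcing $\gamma'$; no approximation by smooth barriers or appeal to Theorem~\ref{thm:sta} is needed, and the fact that $\gamma$ is only continuous is exactly what Definition~\ref{def:visl} and Lemma~\ref{lem:conv} are built to handle. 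Also, your heuristic that the outer normal of $rW_\phi+sW_\psi$ ``agrees with that of $rW_\phi$'' is not the right justification (normals of a Minkowski sum come from matching support points of both bodies); the correct statement is the one in Lemma~\ref{le:wulff-sup-conv}, that the $W_\psi$ sup-convolution shifts each level set by $\gamma\psi(\n)$ in the normal direction, which is what adds $\gamma'$ to the normal velocity.
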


\begin{proof}
We use the fact that the set flow $R(t) W_{\phi}$ is a viscosity subsolution of $V = - \psi(\n)\kappa_{\phi}$ with \[R(t) := \sqrt{4\delta^2 - 2(N-1)t / m_{\phi/\psi}}\] as long as $R(t)>0$, that is, $t < 2\tau_\delta$, since the set flow $R(m_{\phi/\psi}t)W_{\phi}$ is a solution of $V = -\phi(\n) \kappa_{\phi}$ and $\psi m_{\phi/\psi} \leq \phi$ (see \cite{Bel10}). It follows from Lemma~\ref{lem:conv} that the sup-convolution 
\begin{align*}
S(t) := x_0 + R(t) W_{\phi} + (q + \Lambda(t) - \Lambda(0)) W_{\psi}
\end{align*}
is a viscosity subsolution of $V = \psi(\n)(-\kappa_{\phi} + \Lambda')$ for $0 \leq t \leq \min\{h, 2\tau_\delta\}$.  Since $W_\psi \subset m_{\phi/\psi}^{-1}W_{\phi}$ (Lemma~\ref{lem:bw}),  our assumption yields $S(0) \subset\subset \Omega_0$.  Now we can conclude by the comparison principle  (Theorem~\ref{thm:comp}) since $R(t) \geq \delta$ for $0\leq t \leq 3\tau_\delta/2$.\end{proof}

We next show that $|E_h(t)|$ does not grow apart from $|\Omega_0|$ over more than one time step.

\begin{prop}
\label{prop:vol}
Suppose that $E$ satisfies \eqref{eqn:1as} and contains $\B_r$ given in \eqref{eqn:b} for some $r>0$. Then there exists $h_0 = h_0(\P, M_\phi, m_{\phi/\psi}, r)>0$ such that the following holds for $h \in (0,h_0)$:
\begin{align*}
\hbox{ if $|E| < |\oz|$, then $ E \subset \T_h(E;h)$} \,\,\hbox{ and }\,\, \hbox{ if $|E| > |\oz|$, then $\T_h(E;h) \subset E$. }
\end{align*}
\end{prop}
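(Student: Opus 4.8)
The idea is simple: when $|E| < |\oz|$, the forcing $\lambda(E,h) = \alpha/\sqrt{h}$ is large and positive, so the flow $\T_t(E;h)$ should strictly expand on $[0,h]$ provided $h$ is small; this is what $E \subset \T_h(E;h)$ says. The symmetric statement handles $|E| > |\oz|$. To make this precise I would use the Wulff-shape barriers from Lemma~\ref{lem:wulff-bound}. First consider $|E|<|\oz|$, so along $[0,h]$ we have constant $\Lambda(t) = \Lambda^h$ with $\lambda^h \equiv \alpha/\sqrt h$, hence $\Lambda(t) - \Lambda(0) = \alpha t/\sqrt h$ and we may take $q = 0$ in the lemma. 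For a boundary point $x_0 \in \partial E$, the set $E$ need not contain any ball centered at $x_0$; instead I would use the \emph{interior cone property} of Corollary~\ref{cor:lu} (equivalently Theorem~\ref{thm:lpb}, valid since $E$ satisfies \eqref{eqn:1as} and contains $\B_r$): through each nearby interior point there is a fixed cone $-\co_r(A)\subset E$, and one can fit a small Wulff shape $x_1 + 2\delta W_\phi \subset\subset E$ with $|x_0 - x_1| \lesssim \delta$, where $\delta = \delta(\P,r,M_\phi) > 0$ is a fixed constant independent of $x_0$.

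Then Lemma~\ref{lem:wulff-bound} (with $q=0$) gives, for $0 \le t \le \min\{h,\tau_\delta\}$ with $\tau_\delta = \delta^2 m_{\phi/\psi}/(N-1)$,
\[
x_1 + \delta W_\phi + \tfrac{\alpha t}{\sqrt h} W_\psi \subset\subset \T_t(E;h).
\]
At $t = h$, this set contains $x_1 + \delta W_\phi + \alpha \sqrt h\, W_\psi$, which as $h \to 0$ is a neighborhood of $x_1$ of size $\delta + O(\sqrt h)$; since $|x_0 - x_1| \le C\delta$ with $C$ fixed... the point is that I actually want the barrier to \emph{reach} $x_0$, so I should instead choose $\delta$ comparable to the cone geometry so that $x_1 + \delta W_\phi$ already contains $x_0$ in its interior, which is possible because the cone $-\co_r(A)$ opens with a fixed aperture and $W_\phi$ has bounded eccentricity ($m_\phi|\cdot| \le \phi \le M_\phi |\cdot|$). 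With that choice, $x_0 \in \operatorname{int}(x_1 + \delta W_\phi) \subset \T_h(E;h)$ for all $h < h_0$ where $h_0 \le \tau_\delta$. Since $x_0 \in \partial E$ was arbitrary and $\T_h(E;h)$ is open, this yields $\overline E \subset \T_h(E;h)$, in particular $E \subset \T_h(E;h)$.

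For the case $|E| > |\oz|$, the forcing is $-\alpha/\sqrt h$ and I would run the parallel barrier statement from Lemma~\ref{lem:wulff-bound} applied to the complement $E^\cc$, using the \emph{exterior} cone property from Theorem~\ref{thm:lpb} to place a small Wulff shape (now with $-W_\psi$ in the convolution, since the complement shrinks) around each $x_0 \in \partial E$ inside $E^\cc$; the same computation gives $x_0 \in \operatorname{int}\bigl((\T_h(E;h))^\cc\bigr)$, hence $\T_h(E;h) \subset E$. The uniformity of $h_0$ in $\P, M_\phi, m_{\phi/\psi}, r$ comes from the fact that $\delta$, and hence $\tau_\delta$, depends only on these quantities via the cone aperture (from $\P$ and $r$) and the eccentricity bounds of $W_\phi$. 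The main obstacle I anticipate is the geometric bookkeeping in the first step: one must verify that the fixed interior/exterior cones of size $r$ genuinely admit an inscribed Wulff shape $x_1 + 2\delta W_\phi$ of a definite size with $x_0$ in the interior of $x_1 + \delta W_\phi$, uniformly over boundary points and over all admissible $E$ — this is where Theorem~\ref{thm:lpb}'s assertion that the cone directions $A$ are \emph{locally constant and independent of $\Om$} is essential, and some care is needed because the relevant inradius of a thin cone is governed by its aperture rather than its length $r$, but both are controlled by $\P$ and $r$.
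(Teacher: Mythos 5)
Your overall toolbox (the Wulff barriers of Lemma~\ref{lem:wulff-bound} combined with the cone property of Theorem~\ref{thm:lpb}, and $q=0$ since $\Lambda^h$ is nondecreasing when $|E|<|\oz|$) is the same as the paper's, but the key step of your argument fails as stated. You want a \emph{fixed} $\delta=\delta(\P,r,M_\phi)$, independent of $h$, with $x_1+2\delta W_\phi\subset\subset E$ and $x_0\in\operatorname{int}(x_1+\delta W_\phi)$ for a boundary point $x_0\in\partial E$. This is geometrically impossible: since $m_\phi \overline B_1(0)\subset W_\phi$, one has $\delta W_\phi+\delta m_\phi B_1(0)\subset 2\delta W_\phi$, so every point of $x_1+\delta W_\phi$ lies at distance at least $\delta m_\phi$ from the complement of $x_1+2\delta W_\phi\subset E$; hence no point of $\partial E$ can lie in $x_1+\delta W_\phi$. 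Your first instinct ($|x_0-x_1|\lesssim\delta$ fixed, relying on the drift to reach $x_0$) fails for the reason you half-noticed: over the time interval $[0,h]$ the forcing displaces the barrier only by $O(\alpha\sqrt h)\,W_\psi$, which tends to zero, so it can never cross a gap of fixed size $\delta m_\phi$. The two requirements together force the barrier scale to be $h$-dependent, of order $\sqrt h$ --- and this is exactly what the paper does: it fixes $x\in E$ (not only $x\in\partial E$), inscribes in the cone $x-\co_r(A)$ a ball $\overline B_{s\sigma_3}(x_s)\supset x_s+2\delta_s W_\phi$ with $x_s=x-\tfrac{s}{2N}\sum_{p\in A}p$, $\delta_s=s\sigma_3/(2M_\phi)$, and chooses $s=s_h=\tfrac{2M_\phi}{\sigma_3}\sqrt{\tfrac{N-1}{m_{\phi/\psi}}}\sqrt h$ so that simultaneously $\tau_{\delta_{s_h}}=h$ (the barrier survives exactly one step) and $|x-x_{s_h}|\le s_h/2\le m_\psi\alpha\sqrt h=m_\psi\Lambda^h(h)$, the last inequality being precisely what the definition of $\alpha$ in \eqref{eqn:al} is designed for; then $x\in\overline B_{m_\psi\Lambda^h(h)}(x_{s_h})\subset x_{s_h}+\delta_{s_h}W_\phi+\Lambda^h(h)W_\psi\subset\T_h(E;h)$, and $h_0$ is determined by $s_{h_0}=r$. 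In your write-up this quantitative matching of the offset $s/2$ against the drift $m_\psi\alpha\sqrt h$, which is the crux of the proposition, is missing.

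A secondary, more easily repaired, gap: even if you could place $x_0\in\T_h(E;h)$ for every $x_0\in\partial E$, the conclusion ``$\partial E\subset\T_h(E;h)$ and $\T_h(E;h)$ open, hence $\overline E\subset\T_h(E;h)$'' does not follow by topology alone --- an open set containing $\partial E$ need not contain the interior of $E$. The argument must be run at every $x\in E$, as in the paper; interior points are in fact the natural ones to treat, since the cone property gives $x-\co_r(A)\subset E$ for each $x\in E$.
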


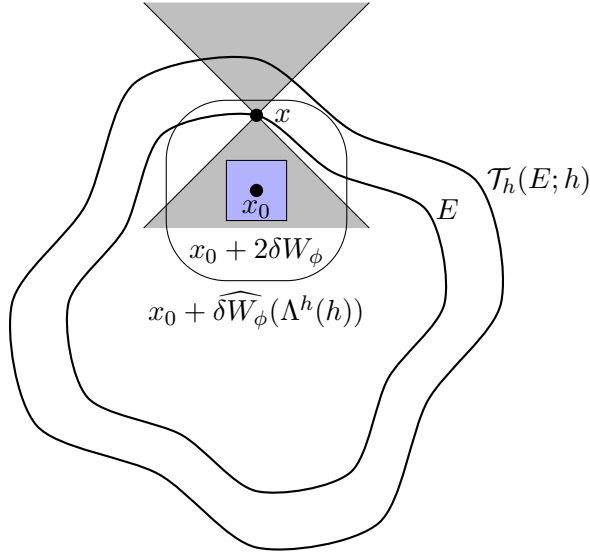
\begin{figure}
\centering
\begin{tikzpicture}[scale=2.5]
\draw[fill=gray!50!white] (-0.6,0.4) -- (0,1) -- (0.6,0.4);
\draw[fill=gray!50!white] (-0.6,1.6) -- (0,1) -- (0.6,1.6);

\def\curve{\draw[thick] plot [smooth cycle] coordinates {
(1,0) (0.9,0.5) (0.4,0.7) 
(0,1) (-0.5,0.9) (-0.7,0.4)
(-1,0) (-0.9,-0.5) (-0.4,-0.7)
(0,-1) (0.5,-0.9) (0.7,-0.4)};}

\curve
\draw (0.9,0.5) node[right] {$E$};
\begin{scope}[scale=1.3]
\curve
\draw (0.9,0.5) node[right] {$\mathcal T_h(E; h)$};
\end{scope}

\fill (0,1) node[right=3pt] {$x$} circle [radius=1pt];

\draw[fill=blue!30!white] (-0.16,0.76) rectangle +(0.32,-0.32);
\draw (0,0.4) node[below] {$x_0 + 2\delta W_{\phi}$};
\fill (0,0.6) node[below] {$x_0$} circle [radius=1pt];
\begin{scope}[yshift=0.6cm,scale=0.16]
\draw (3,1) arc [start angle=0,end angle=90,radius=2]
--(-1,3) arc [start angle=90,end angle=180,radius=2] 
--(-3,-1) arc [start angle=180,end angle=270,radius=2] 
-- node[below] {$x_0 + \widehat{\delta W_{\phi}}(\Lambda^h(h))$}
(1,-3) arc [start angle=270,end angle=360,radius=2] 
-- cycle;
\end{scope}
\end{tikzpicture}
\caption{Barrier argument in Proposition~\ref{prop:vol} if $|E| < |\oz|$}
\label{fig:vol}
\end{figure}

\begin{proof}
Let us assume that $|E|<|\oz|$ 
and fix $x \in E$.  By Theorem~\ref{thm:lpb}  $E$ contains $x-\co_r(A)$ given by some basis $A \subset \P$. By definition of $\sigma_3$ in \eqref{eqn:s3} this cone contains a ball of radius $s\sigma_3$ centered at $x_s := x - \frac{s}{2N} \sum_{p\in A} p$ for any $0 < s \leq r$, 
and since $W_{\phi} \subset \overline{B}_{M_\phi}(0)$ by Lemma~\ref{lem:bw}, we have
for $\delta_s:= s\sigma_3/(2M_\phi)$
\begin{align*}
x_s + 2\delta_s W_{\phi} \subset \overline B_{s \sigma_3}(x_s) \subset x - \co_r(A) \subset  E \quad \hbox{ for any } 0< s < r.
\end{align*}
For given $h > 0$ we set
$$
 s =s_h:= \frac{2M_\phi}{\sigma_3} \sqrt{\frac{N-1}{m_{\phi/\psi}}} \sqrt h.
$$
Let $h_0 > 0$ be the solution of $s_{h_0} = r$ and fix $h \in (0, h_0)$. 

Recall now that $\Omega_t:=\T_t(E; h)$ is by definition a viscosity solution of $V = \psi (\n)(-\kappa_\phi + (\Lambda^h)')$ with initial data $E$. Setting $\delta = \delta_s$ and $q = 0$, and noting that $\Lambda^h(t) \geq 0 = \Lambda^h(0)$  for $0\leq  t\leq h$ since $|E| < |\oz|$, Lemma~\ref{lem:wulff-bound} yields the last inclusion in
\begin{align}
\label{B-Lambdah}
x \in \overline B_{m_{\psi}\Lambda^h(h)}(x_s) \subset x_s +\delta_s W_{\phi} + \Lambda^h(h) W_{\psi}
\subset \T_h(E; h)
\end{align}
since $h = \tau_{\delta_s}$, where the first inclusion follows from $|x - x_s| \leq s/2 \leq m_\psi \alpha \sqrt{h} = m_\psi \Lambda^h(h)$ by definition of $\alpha$ in \eqref{eqn:al}, and the second one by Lemma~\ref{lem:bw}. We deduce $E \subset \T_h(E; h)$ since $x \in E$ was arbitrary.

\medskip

The inequality $|E| > |\Omega_0|$ can be handled similarly, considering the complements of the sets.
\end{proof}

Next we show that $\partial\T_t(E;h)$ stays in $O(\sqrt{h})$-neighborhood of $\partial E$.

\begin{prop}
\label{prop:cct} 
\begin{align*}
\T_t(E; h)^\cc \subset E^\cc +  \overline B_{c\sqrt h}(0) \quad \hbox{ and }  \quad \T_t(E;h) \subset E + \overline B_{c\sqrt h}(0) \,\, \hbox{ for all } t\in [0,h].
\end{align*}
Here $c := ( 2  + m_{\phi/\psi}^{-1} ) M_\phi \alpha$ with $m_{\phi/\psi}$, $M_\phi$ as given in \eqref{eqn:mphi}.
\end{prop}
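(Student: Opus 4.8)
\textbf{Proof proposal for Proposition~\ref{prop:cct}.}

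The plan is to confine the $h$-flow $\T_t(E;h)$ between two Wulff-shape barriers, one shrinking and one growing, whose displacement over the time interval $[0,h]$ is $O(\sqrt h)$. Since $\lambda(E,h)$ is a constant equal to $\pm\alpha/\sqrt h$ (or $0$), we have $\Lambda^h(t) = \lambda(E,h)\, t$ on $[0,h]$, so $|\Lambda^h(t)| \leq \alpha\sqrt h$ for all $t\in[0,h]$; this is the only place the precise form of $\Lambda^h$ enters. The strategy is then to run the inner and outer estimates separately, each obtained by comparison (Theorem~\ref{thm:comp}) with an explicit self-similar barrier built from $W_\phi$ translated by a multiple of $W_\psi$, exactly as in Lemma~\ref{lem:wulff-bound} and Lemma~\ref{lem:conv}.

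For the outer bound $\T_t(E;h)\subset E + \overline B_{c\sqrt h}(0)$, I would argue pointwise: fix $y \notin E + \overline B_{c\sqrt h}(0)$; then a large ball around $y$, hence (by Lemma~\ref{lem:bw}, using $W_\phi\subset\overline B_{M_\phi}(0)$ and $m_{\phi/\psi}^{-1}W_\phi\supset W_\psi$) a suitably large Wulff shape $y + \rho_0 W_\phi$ with $\rho_0$ comparable to $c\sqrt h/M_\phi$, lies in $E^\cc$. The complement flow $R(t)W_\phi$ centered at $y$, with $R(t) = \sqrt{\rho_0^2 - 2(N-1)t/m_{\phi/\psi}}$ (a subsolution of $V=-\psi(\n)\kappa_\phi$ for the complement after the mobility rescaling, as in Lemma~\ref{lem:wulff-bound}), sup-convolved with $|\Lambda^h(t)|(-W_\psi)$ via Lemma~\ref{lem:conv}, gives a barrier for the complement of $\T_t(E;h)$. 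Choosing $\rho_0$ so that $R(t)$ stays positive on $[0,h]$ and the total inward displacement $R(0)-R(t)$ plus the $\Lambda^h$-translation of size $\leq m_\psi^{-1}$ times... — more precisely, tracking that the barrier moves inward by at most $(\rho_0 - R(h)) + M_\phi|\Lambda^h(h)|$, and that $\rho_0 - R(h) \leq (N-1)h/(m_{\phi/\psi}\rho_0)$ — one sees $c\sqrt h$ with $c = (2 + m_{\phi/\psi}^{-1})M_\phi\alpha$ suffices after balancing the two error contributions in $\rho_0$. The inner bound $\T_t(E;h)^\cc \subset E^\cc + \overline B_{c\sqrt h}(0)$ is the symmetric statement and follows by the identical argument applied to $E$ itself (growing barrier $R(t)W_\phi$ translated by $+|\Lambda^h(t)|W_\psi$), using the parallel version of Lemma~\ref{lem:wulff-bound}.

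The main obstacle I anticipate is purely bookkeeping: choosing the barrier radius $\rho_0$ as a function of $h$ so that simultaneously (i) $R(t)$ remains strictly positive on all of $[0,h]$ — which forces $\rho_0^2 \gtrsim (N-1)h/m_{\phi/\psi}$ — and (ii) the sum of the curvature-driven displacement $O\!\big((N-1)h/(m_{\phi/\psi}\rho_0)\big)$ and the forcing-driven displacement $O(M_\phi\alpha\sqrt h)$ does not exceed $c\sqrt h$ with the stated constant. Optimizing $\rho_0 \sim \sqrt h$ makes both terms $O(\sqrt h)$, and then one checks the constant collapses to $(2+m_{\phi/\psi}^{-1})M_\phi\alpha$ — the ``$2$'' absorbing the starting radius of the barrier relative to the distance it must certify, and the ``$m_{\phi/\psi}^{-1}$'' coming from the $W_\psi$-to-$W_\phi$ comparison. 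No new idea beyond Lemma~\ref{lem:wulff-bound} is needed; the statement is essentially that lemma read as a one-sided speed bound, made uniform in the base point and applied in both directions. A minor subtlety is that $E$ need not contain any particular Wulff shape for the \emph{outer} bound (only its complement is used as the obstacle), and symmetrically for the inner bound, so the hypotheses of Proposition~\ref{prop:cct} (no assumption on $E$ at all) are exactly what is available and exactly what is needed.
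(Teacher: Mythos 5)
Your proposal is correct and is essentially the paper's argument: reduce to a pointwise statement (a point at distance more than $c\sqrt h$ from $\partial E$ on either side), use $|\Lambda^h(t)-\Lambda^h(0)|\leq\alpha\sqrt h$ on $[0,h]$, and compare with the Wulff-shape barriers of Lemma~\ref{lem:wulff-bound} (and its complement version), with $W_\phi\subset\overline B_{M_\phi}(0)$ and $W_\psi\subset m_{\phi/\psi}^{-1}W_\phi$ supplying the constant and the choice of $\alpha$ in \eqref{eqn:al} guaranteeing $h\leq\tau_\delta$. The only difference is presentational: the paper does not re-track the barrier displacement but simply applies Lemma~\ref{lem:wulff-bound} with $\delta=q=\alpha\sqrt h$, so the hypothesis $x_0+(2\delta+q/m_{\phi/\psi})W_\phi\subset E$ (or $E^\cc$) directly yields $c=(2+m_{\phi/\psi}^{-1})M_\phi\alpha$.
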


\begin{proof}
To prove the first inclusion, suppose  $x_0 \notin E^\cc + \overline B_{c\sqrt h}(0)$. 
Since $\overline B_{c\sqrt h}(x_0) \subset E$, Lemma~\ref{lem:bw} implies that
\begin{align*}
x_0 + ( 2  + m_{\phi/\psi}^{-1} ) \alpha \sqrt{h} W_{\phi} \subset E.
\end{align*}
On the other hand, $\Lh$   satisfies $\Lambda^h(t) \geq \Lambda^h(0) - \alpha\sqrt{h}$ for  $0\leq t \leq h$.
Thus Lemma~\ref{lem:wulff-bound} with $\delta = q = \alpha \sqrt{h}$ yields
\begin{align*}
x_0 + \alpha \sqrt{h} W_{\phi} \subset \T_t(E;h) \hbox{ for all } 0 \leq t \leq \tau_\delta=h\alpha^2m_{\phi/\psi}/(N-1).
\end{align*}
As $\alpha$ satisfies \eqref{eqn:al}, $h\leq \tau_\delta$, and thus the first inclusion is obtained. Parallel arguments yield the other inclusion.
\end{proof}

\begin{prop}
\label{prop:err}
For given $r>0$, there exist positive constants $h_0$ and $C$ depending only on $\P, M_\phi, m_{\phi/\psi}, m_\psi, r$ such that the following holds for all $0<h<h_0$:

If a set $E$ satisfies \eqref{eqn:1as}, contains $\B_r$ in \eqref{eqn:b} and satisfies $\big| |E| - |\oz| \big| < C \sqrt{h}$, then 
 $$
\big| |\T_h(E;h)| - |\oz| \big| < C \sqrt{h}.
$$

\end{prop}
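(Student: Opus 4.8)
Proposition~\ref{prop:err} is the key volume-error estimate that makes the discrete $\lambda$-scheme work: over a single time step of length $h$, the change in volume is at most $O(\sqrt h)$, but crucially with the \emph{same} constant $C$ that bounds the current error, so the bound is self-propagating. I would argue as follows.

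\textbf{Setup and case split.} Fix $E$ satisfying \eqref{eqn:1as}, containing $\B_r$, with $\big||E|-|\oz|\big| < C\sqrt h$; write $\T_h := \T_h(E;h)$. The sign of $\lambda(E,h)$ is constant on $[0,h]$ (it only depends on $E$, not on the evolving set), so there are three cases. If $|E| = |\oz|$, then $\lambda(E,h) = 0$ and $\T_h$ is just anisotropic curvature flow from $E$; here I would use Proposition~\ref{prop:cct} to say $\partial\T_t$ stays within $c\sqrt h$ of $\partial E$, and combine with the uniform Lipschitz cone property (Theorem~\ref{thm:lpb}, applicable since $E \supset \B_r$ satisfies \eqref{eqn:1as}) to bound the symmetric difference $|\T_h \triangle E|$ by $C'\sqrt h \cdot \H^{N-1}(\partial E)$; the perimeter is bounded uniformly by the diameter bound coming from Theorem~\ref{thm:bal}, so $\big||\T_h| - |\oz|\big| \le \big||\T_h|-|E|\big| + \big||E|-|\oz|\big| < C''\sqrt h$ — but this naive triangle-inequality bound would \emph{increase} the constant, which is not good enough. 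So the real work is in the two genuinely-forced cases.

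\textbf{The forced case — the main point.} Suppose $|E| < |\oz|$ (the case $|E| > |\oz|$ is symmetric, working with complements as in Proposition~\ref{prop:vol}). Then $\lambda(E,h) = \alpha/\sqrt h > 0$, so $\Lambda^h$ is increasing on $[0,h]$ and by Proposition~\ref{prop:vol} (with $\alpha$ chosen via \eqref{eqn:al}) we have the \emph{monotonicity} $E \subset \T_h$. Hence $|\T_h| - |E| = |\T_h \setminus E| \ge 0$, so the new volume only moves \emph{towards} $|\oz|$: we get the one-sided bound $|\T_h| > |E| > |\oz| - C\sqrt h$ for free. The other side, $|\T_h| < |\oz| + C\sqrt h$, is where the constant must be controlled. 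The idea is to bound $|\T_h \setminus E|$ from above: by Proposition~\ref{prop:cct}, $\T_h \subset E + \overline B_{c\sqrt h}(0)$, so $|\T_h \setminus E| \le |(E + \overline B_{c\sqrt h}(0)) \setminus E| \le c\sqrt h \cdot \H^{N-1}(\partial E) + O(h)$ by the Lipschitz regularity of $\partial E$ (which also controls the $(N-1)$-dimensional Minkowski content of $\partial E$, uniformly, via the cone property and the uniform diameter bound). Thus if $|E| \le |\oz|$ we conclude $|\T_h| \le |E| + c\sqrt h\,\H^{N-1}(\partial E) + O(h) \le |\oz| + (c\, P_{\max} + o(1))\sqrt h$, where $P_{\max}$ is the uniform perimeter bound. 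Choosing $C := 2 c\, P_{\max}$ (say) and $h_0$ small enough that the $O(h)$ term is absorbed, we get $|\T_h| < |\oz| + C\sqrt h$. Combined with the lower bound, $\big||\T_h| - |\oz|\big| < C\sqrt h$. The point is that when $E$ is on the ``wrong side,'' the forcing pushes monotonically, so the error on the side facing $|\oz|$ cannot grow at all, and on the other side it is freshly bounded by the one-step displacement $c\sqrt h$ times a \emph{fixed} perimeter — independent of the incoming error $C\sqrt h$. It is exactly this structure, not a triangle inequality, that keeps $C$ from blowing up over iterations.

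\textbf{Uniform perimeter bound and conclusion.} The one ingredient I am invoking that is not literally stated as a lemma is the uniform bound on $\H^{N-1}(\partial E)$ (equivalently, on the Minkowski content of $\partial E$, so that $|(E + \overline B_\rho)\setminus E| \le C_N \rho\, \H^{N-1}(\partial E)$ for small $\rho$). This follows because Theorem~\ref{thm:lpb} gives $\partial E$ a uniform two-sided $r$-cone property with locally constant cone directions depending only on $\P$, so $\partial E$ is locally a Lipschitz graph with a uniform Lipschitz constant $L(\P)$ over finitely many coordinate patches, and Theorem~\ref{thm:bal} gives a uniform bound on $\operatorname{diam}(E)$ in terms of $|\Omega_0|$ (via the in-radius/out-radius comparison and the volume lower bound \eqref{volume}); together these bound the surface area by a constant $P_{\max} = P_{\max}(\P, |\Omega_0|)$. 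I expect this to be the main technical obstacle — not deep, but requiring care to make every constant depend only on the advertised quantities $\P, M_\phi, m_{\phi/\psi}, m_\psi, r$ (with $|\Omega_0|$ entering through $r$ as in Theorem~\ref{thm:ve}), and to check that the covering by coordinate patches in Theorem~\ref{thm:lpb} is itself uniform. Everything else — the sign analysis, the application of Propositions~\ref{prop:vol} and~\ref{prop:cct}, the choice of $C$ and $h_0$ — is then bookkeeping.
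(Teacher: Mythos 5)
Your proposal is correct and follows essentially the same route as the paper: a one-step displacement bound from Proposition~\ref{prop:cct} combined with a Minkowski-content/perimeter estimate gives $\big||\T_h(E;h)|-|E|\big|\lesssim\sqrt h$ with a constant independent of the incoming error, and the monotonicity $E\subset\T_h(E;h)$ (resp.\ the reverse inclusion) from Proposition~\ref{prop:vol} prevents the error from growing on the side facing $|\Omega_0|$, so the same $C$ propagates. The only ingredient you flag as missing, the uniform bound $|(E+\overline B_s)\setminus E|\le cs$, is exactly the paper's appendix Proposition~\ref{prop:dis} (via Lemma~\ref{lem:per} and the cone property of Theorem~\ref{thm:lpb}), proved just as you sketch.
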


\begin{proof}
Proposition~\ref{prop:cct} and Proposition~\ref{prop:dis} yield
\begin{align}
\label{eqn:err12}
\big| |\T_t(E;h)| - |E| \big| < c \left( 2  + m_{\phi/\psi}^{-1} \right) M_\phi \alpha \sqrt{h} \hbox{ for all } t \in [0,h]
\end{align}
 for some $c = c(\P)$. If $|E| = |\oz|$, then we can conclude with 
\begin{align}
\label{eqn:c}
C(\P, M_\phi, m_{\phi/\psi}, m_\psi) := c \left( 2  + m_{\phi/\psi}^{-1} \right) M_\phi \alpha.
\end{align}

\medskip

If $|E|<|\oz|$, Proposition~\ref{prop:vol} yields $h_0$ such that $|E| \leq |\T_h(E;h)|$ for all $h \in (0,h_0)$. Thus we have
\begin{align*}
-C\sqrt{h}\leq |E| - |\oz| \leq |\T_h(E;h)| - |\oz| < |\T_h(E;h)| - |E| \hbox{ for all } h \in (0, h_0),
\end{align*}
where the first inequality is given by the assumption. Combining the above with \eqref{eqn:err12}, we conclude with the same choice of $C$. Parallel arguments work if $|E| > |\oz|$.
\end{proof}

\medskip

\textbf{Proof of Theorem~\ref{thm:ve}.} 

\medskip
Let $C=C(\P, M_\phi, m_{\phi/\psi}, m_\psi)$ be the constant given in \eqref{eqn:c}. Due to \eqref{volume}, we have
\begin{align}
\label{eqn:h}
|\oz| > |B_{\K \rho}(0)| + 2C \sqrt{h_0} + \e \qquad \hbox{ for } h_0, \e (= \e(\P, |\Omega_0|)) \ll 1,
\end{align}
where $\K$ is given in \eqref{eqn:k0}. In particular, as $|\oz| > |B_{\K \rho}(0)| + \e$, Theorem~\ref{thm:pre} yields $\B_r \subset \oz$  for some $r>0$ depending only on $\P$ and $\e = \e(\P, |\Omega_0|)$. 

\medskip

Now Proposition~\ref{prop:err} applies to $E = \oz$ and we have $\big| |E_{h}(h)| - |\oz| \big| < C \sqrt{h}$ for all $h \in (0,h_0)$, if $h_0$ is chosen sufficiently small. From this and \eqref{eqn:h}, we get
\begin{align*}
|E_{h}(h)| > |B_{\K \rho}(0)| + C\sqrt{h}+ \e \hbox{ for all } h \in (0,h_0).
\end{align*}
Again, as $|E_{h}(h)| > |B_{\K \rho}(0)| + \e$, Theorem~\ref{thm:pre} now yields that $E_h(h)$ satisfies \eqref{eqn:1as} and contains $\B_r$ for the same $r$ given above. 

\medskip

We can iterate this argument in the time interval $[kh,(k+1)h]$ for all $k \in \N$. Applying Proposition~\ref{prop:err} to $E = E_{kh}(h)$ and then using Theorem~\ref{thm:pre}, we conclude that
\begin{align}
\label{eqn:ve31}
E_{(k+1)h}(h) \hbox{ satisfies } \eqref{eqn:1as}, \hbox{ contains } \B_r \hbox{ and satisfies } \big| |E_{(k+1)h}(h)| - |\oz| \big| < C \sqrt{h}
\end{align}
for all $k \in \N$ and $h \in (0,h_0)$.

\medskip

Lastly, it follows from \eqref{eqn:err12} with $E = E_{\lfloor t/h\rfloor h}$ and \eqref{eqn:ve31} that
\begin{align*}
\big| |E_{t}(h)| - |\oz| \big| \leq \big| |E_{t}(h)| - |E_{\lfloor t/h\rfloor h}| \big| + \big| |E_{\lfloor t/h\rfloor h}| - |\oz| \big| < 2C\sqrt{h} \hbox{ for all } t \geq 0 \hbox{ and } h \in (0,h_0).
\end{align*}
The above and \eqref{eqn:h} imply that
\begin{align*}
|E_t(h)| > |B_{\K \rho}(0)| + \e \hbox{ for all } t \geq 0 \hbox{ and } h \in (0,h_0). 
\end{align*}
By Theorem~\ref{thm:pre} again, $E_t(h)$ satisfies \eqref{eqn:1as} and contains $\B_r$ for all $t\geq 0$ and $h \in (0,h_0)$.
\hfill$\Box$

\subsection{Equicontinuity of \texorpdfstring{$\Lh$}{Lamda h} and \texorpdfstring{$E_t(h)$}{Et(h)}}

\begin{prop}
\label{prop:hol}
Let $\oz$ and $\phi$ be as in Theorem~\ref{thm:ve}.
Then $\Lh: \Rpz \to \R$ in \eqref{eqn:et} is locally uniformly H\"{o}lder continuous. Namely there exist positive constants $h_1$ and $C$ depending only on $\P, m_\psi, M_\phi, m_{\phi/\psi}$ and $|\Omega_0|$  such that
\begin{align}
\label{eqn:1hol}
\left| \Lh(t) - \Lh(s) \right| \leq C |t-s|^{\frac{1}{2}}\quad \hbox{ for any } |t-s| \leq 1 \hbox{ and } h \in (0, h_1).
\end{align}
\end{prop}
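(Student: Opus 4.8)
The plan is to separate short from long time intervals and, on long intervals, to exploit that the piecewise constant forcing $\lh$ changes sign after a bounded number of time steps. Throughout write $g_k:=|E_{kh}(h)|-|\oz|$, so that by \eqref{eqn:lam} the constant value of $\lh$ on $[kh,(k+1)h)$ is $\lh_k:=\tfrac{\alpha}{\sqrt h}\sign(-g_k)$, while $\Lh((k+1)h)-\Lh(kh)=\lh_k h$ and $g_{k+1}-g_k=|E_{(k+1)h}(h)|-|E_{kh}(h)|$. Since $|\lh|\le\alpha/\sqrt h$, whenever $|t-s|\le (L+1)h$ (with $L$ the constant produced below) one immediately gets $|\Lh(t)-\Lh(s)|\le\tfrac{\alpha}{\sqrt h}|t-s|\le \alpha(L+1)^{1/2}|t-s|^{1/2}$, using $|t-s|^{1/2}\le((L+1)h)^{1/2}$; so it remains to treat $(L+1)h<|t-s|\le1$.

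The key input is a two-sided control of the volume change over a single time step for sets containing $\B_r$ and satisfying \eqref{eqn:1as}. The upper bound $|g_{k+1}-g_k|\le C_1\sqrt h$ is \eqref{eqn:err12}. For a matching lower bound $|g_{k+1}-g_k|\ge c_0\sqrt h$ whenever $g_k\ne0$, I would re-run the Wulff-shape barrier from the proof of Proposition~\ref{prop:vol}: when $|E_{kh}(h)|<|\oz|$ that barrier places a region $x_0+\delta_s W_\phi+\alpha\sqrt h\,W_\psi$ inside $\T_h(E_{kh}(h);h)$ near every boundary point of $E_{kh}(h)$, and since $\B_r\subset E_{kh}(h)$ and $\partial E_{kh}(h)$ has the uniform cone property of Theorem~\ref{thm:lpb}, this forces $\T_h(E_{kh}(h);h)$ to contain an outward normal layer of width $\gtrsim m_\psi\alpha\sqrt h$ over a portion of $\partial E_{kh}(h)$ of $\H^{N-1}$--measure bounded below by a constant depending only on $\P,|\oz|$; integrating yields the gain of volume, with $c_0=c_0(\P,M_\phi,m_{\phi/\psi},m_\psi,|\oz|)>0$, the case $|E_{kh}(h)|>|\oz|$ being symmetric. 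Combined with the sign consistency of Proposition~\ref{prop:vol} (the volume moves monotonically toward $|\oz|$ during each step) and the global bound $|g_k|\le C\sqrt h$ from Theorem~\ref{thm:ve}, these per-step bounds show that $|g_k|$ strictly decreases by at least $c_0\sqrt h$ per step as long as $\sign g_k$ is constant, while it overshoots $0$ by at most $C_1\sqrt h$ at a sign change; hence every maximal run of time steps on which $\sign(|\oz|-|E_{kh}(h)|)$ is constant has length at most $L:=\lceil C_1/c_0\rceil+1$, a constant depending only on $\P,M_\phi,m_{\phi/\psi},m_\psi$. In particular $\lh$ cannot keep the same sign on a time interval longer than $(L+1)h$.

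For $(L+1)h<|t-s|\le1$ I would first peel off the two end pieces: by the run-length bound there are sign-change times $\tau_*\in(s,s+(L+1)h]$ and $\tau^*\in[t-(L+1)h,t)$, and $|\Lh(\tau_*)-\Lh(s)|,|\Lh(t)-\Lh(\tau^*)|\le\alpha(L+1)\sqrt h\le\alpha(L+1)|t-s|^{1/2}$. On the middle interval, the two-sided bound lets me write, for the steps with $g_k\ne0$, $\lh_k h=\beta_k(g_{k+1}-g_k)$ with $\beta_k:=\alpha/|g_{k+1}-g_k|\in[\tfrac{\alpha}{C_1\sqrt h},\tfrac{\alpha}{c_0\sqrt h}]$ (the $g_k=0$ steps contributing $0$ to $\Lh$), and sum by parts, writing $k_*=\tau_*/h$, $k^*=\tau^*/h$:
\[
\Lh(\tau^*)-\Lh(\tau_*)=\beta_{k^*-1}g_{k^*}-\beta_{k_*}g_{k_*}+\sum_{k_*<k<k^*}(\beta_{k-1}-\beta_k)\,g_k .
\]
Since $\tau_*,\tau^*$ are sign-change times, $|g_{k_*}|,|g_{k^*}|\le C_1\sqrt h$, and the bulk term is bounded by $\sup_k|g_k|\le C\sqrt h$ times the total variation of $k\mapsto\beta_k$, i.e.\ of the reciprocal per-step volume change.

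The remaining, and main, difficulty is to turn this into $|\Lh(\tau^*)-\Lh(\tau_*)|\le C|t-s|^{1/2}$: one must show that the total variation of $\beta_k$ over the interval — equivalently, modulo the curvature contribution, the variation of the anisotropic perimeter of $E_\tau(h)$ — is of size $|t-s|^{1/2}/\sqrt h$ rather than the naive $|t-s|/h$, and that the boundary terms cost only $O(|t-s|^{1/2})$ (choosing $\tau_*,\tau^*$ suitably so the surviving $\beta g$ terms nearly cancel the bulk). Here one uses that, for the flow with $C^2$ anisotropy and spatially constant forcing of size $\alpha/\sqrt h$, parabolic smoothing gives a uniform bound $\|\kappa_\phi(\cdot,\tau)\|_\infty\lesssim \tau^{-1/2}+h^{-1/2}$ so that the curvature contribution to the volume over $[\tau_*,\tau^*]$ is controlled in terms of $|t-s|^{1/2}$, that the shape of $E_\tau(h)$ stabilizes along the (nearly volume-preserving) flow so that the oscillations of $\beta_k$ accumulate only to a bounded amount, and that the net volume change $|g_{k^*}-g_{k_*}|\le2C\sqrt h$ by Theorem~\ref{thm:ve}. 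This bookkeeping is the delicate part and I expect it to be carried out exactly as in the corresponding estimate of \cite{KimKwo18b}; the time discontinuities of $\lh$ cause no extra trouble since $\lh$ is spatially constant. Assembling the contributions over $[s,\tau_*]$, $[\tau_*,\tau^*]$ and $[\tau^*,t]$ then yields \eqref{eqn:1hol}.
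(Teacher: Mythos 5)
Your argument has a genuine gap at its central step, and you acknowledge it yourself. After the summation by parts, the bulk term is $\sup_k|g_k|$ times the total variation of $\beta_k=\alpha/|g_{k+1}-g_k|$; with only the two-sided bounds $c_0\sqrt h\le|g_{k+1}-g_k|\le C_1\sqrt h$ this variation can a priori be of order $|t-s|/h^{3/2}$ (each of the $|t-s|/h$ steps can change $\beta_k$ by $\sim h^{-1/2}$), so the product is only $O(|t-s|/h)$, far worse than $|t-s|^{1/2}$. To repair this you invoke a parabolic smoothing bound $\|\kappa_\phi(\cdot,\tau)\|_\infty\lesssim\tau^{-1/2}+h^{-1/2}$ and a ``stabilization of shape'' along the flow, deferring the bookkeeping to \cite{KimKwo18b}; but neither ingredient is available here --- the sets $E_t(h)$ are merely Lipschitz, no curvature or perimeter-variation estimates are proved anywhere in the paper, and this is not how the cited estimate works --- so the main step is missing rather than routine. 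A secondary issue is the asserted per-step lower bound $|g_{k+1}-g_k|\ge c_0\sqrt h$: simply ``re-running'' the barrier of Proposition~\ref{prop:vol} does not give an outward layer of width $\gtrsim m_\psi\alpha\sqrt h$, since there the ball $\overline B_{m_\psi\Lh(h)}(x_s)$ is centered at a point $x_s$ lying up to $s/2$ \emph{inside} $E$, and with the choice \eqref{eqn:al} one may have $s/2=m_\psi\alpha\sqrt h$ exactly, so that argument only yields $E\subset\T_h(E;h)$; an outward gain of order $\sqrt h$ can be extracted (e.g.\ using $\delta_s W_\phi\supset\overline B_{\delta_s m_\phi}(0)$ and a lower bound on the layer volume coming from $\B_r\subset E$), but this must be argued, and in any case it does not resolve the main gap above.

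For comparison, the paper's proof needs neither the sign-change counting nor any per-step lower bound. If $\Lh(t^*)-\Lh(s^*)>\alpha|t^*-s^*|^{1/2}+R$ with $R>0$, let $s_0$ be the last time in $[s^*,t^*]$ with $\Lh(s_0)\le\Lh(s^*)$; applying Lemma~\ref{lem:wulff-bound} exactly as in Proposition~\ref{prop:vol}, with $s=2\alpha|t^*-s_0|^{1/2}\le r$ (this is where the smallness threshold on $|t^*-s^*|$, and hence $h_1$, enters), gives the dilation inclusion $E_{s_0}(h)+\overline B_R(0)\subset E_{t^*}(h)$. Then the inequality of \cite[Theorem~3]{Kra16}, the isoperimetric inequality, and the volume estimate $\big||E_\tau(h)|-|\oz|\big|\le C\sqrt h$ of Theorem~\ref{thm:ve} force $R\lesssim h^{1/2}$, and combining with the trivial Lipschitz bound $\Lh(t^*)-\Lh(s^*)\le\alpha|t^*-s^*|/\sqrt h$ (which handles $|t^*-s^*|<h$) yields \eqref{eqn:1hol}; the opposite inequality follows by the same argument applied to the complements. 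If you want to keep your framework, you would have to supply the missing estimate on the variation of $\beta_k$, but the dilation-plus-isoperimetry route above is the direct way to close the proof with the tools the paper actually provides.
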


\begin{proof}
The proof is based on the fact that $|E_t(h)|$ is  $O(h^{1/2})$-close to $|\Omega_0|$ (Theorem~\ref{thm:ve}), which bounds the oscillation range of $\Lambda^h$. 

We will only show \eqref{eqn:1hol} for small time intervals.  Let $\alpha$ satisfy \eqref{eqn:al} and let $r$ and $h_0$ be as given in Theorem~\ref{thm:ve}. For $\e_0 := r^2 / (2\alpha)^2$, we claim that there are constants $ h_1\in (0, h_0)$ and $C \geq \alpha$ with dependence as given above such that \eqref{eqn:1hol} holds for $|s - t| \leq \e_0$. 

It is clearly enough to show the claim for $s<t$. 
Suppose that for some $h \in (0, h_0)$, $s^*$, $t^*$ with $s^* < t^* \leq s^* +\e_0$ we have
\begin{align*}
R := \Lambda^h(t^*) - \Lambda^h(s^*) - \alpha |t^*-s^*|^{1/2} > 0.
\end{align*}
Let $s_0$ be the last time in $[s^*, t^*]$ such that $\Lambda^h(s_0) \leq \Lambda^h(s^*)$. Since $\Lambda^h$ is continuous $s_0$ is well-defined, less than $t^*$, and satisfies $\Lambda^h(\tau) > \Lambda^h(s_0) = \Lambda^h(s^*)$ for $\tau \in (s_0, t^*]$.   We first show that 
\begin{align}
\label{Es0-inclusion}
E_{s_0}(h) + \overline B_R(0) \subset E_t(h).
\end{align}

To deduce this, we apply Lemma~\ref{lem:wulff-bound} as in the proof of Proposition~\ref{prop:vol}, but this time with $s := 2\alpha|t-s_0|^{1/2} \leq 2\alpha \e_0^{1/2} \leq r$. Thus $\delta := s \sigma_3 / (2 M_\phi)$ satisfies $\tau_\delta = \delta^2 m_{\phi/\psi}/(N-1) > |t - s_0|$.
Using these estimates we can proceed as in the proof of Proposition~\ref{prop:vol} to show that for every $x \in E_{s_0}(h)$ we can find $x_0$ satisfying $x_0 + 2\delta W_{\phi} \subset E_{s_0}(h)$ and $|x - x_0| \leq s /2 = \alpha|t-s_0|^{1/2}$. Lemma~\ref{lem:wulff-bound} yields an inclusion as in \eqref{B-Lambdah} and we deduce
\begin{align*}
\overline B_R(x) \subset \overline B_{\Lambda^h(t) - \Lambda^h(s_0)}(x_0) \subset E_t(h).
\end{align*}
This implies \eqref{Es0-inclusion}. 

\medskip

We now use a series of inequalities to estimate $R$. The inequality \cite[Theorem~3]{Kra16} with dimensional constant $C_1$ is used first, then \eqref{Es0-inclusion} and the isoperimetric inequality that yield another dimensional constant $C_2$, and finally the estimate in Theorem~\ref{thm:ve} is used to find constants $h_1$ and $C_3$ depending only on $\P, M_\phi, m_{\phi/\psi}, m_\psi$ such that $|E_t(h) \setminus E_{s_0}(h)| \leq C_3h^{1/2}$ and $|\Omega_0|/2 \leq |E_t(h)|$ for $h\in (0, h_1)$. These all yield
\begin{align*}
R \leq C_1 \frac{|(E_{s_0}(h) + \overline B_R(0))\setminus E_{s_0}(h)|}{\per(E_{s_0}(h) + \overline B_R(0))}
\leq C_2 |E_t(h)|^{(1-N)/N} |E_t(h) \setminus E_{s_0}(h)| \leq C_4 |\Omega_0|^{(1-N)/N} h^{1/2}.
\end{align*}
for $h \in (0, h_1)$ with $C_4$  depending only on $\P, M_\phi, m_{\phi/\psi}, m_\psi$.  Setting $M := C_4|\Omega_0|^{(1-N)/N}$, we therefore have
\begin{align*}
\Lambda^h(t^*) - \Lambda^h(s^*) = \alpha|t^* - s^*|^{1/2} + R \leq \alpha|t^*-s^*|^{1/2} + Mh^{1/2}.
\end{align*}
Since we also have $\Lambda^h(t^*) - \Lambda^h(s^*) \leq \frac{\alpha}{h^{1/2}} |t ^*- s^*|$ by the definition of $\Lambda^h$, it follows that 
\begin{align*}
\Lambda^h(t^*) - \Lambda^h(s^*) \leq C |t^* - s^*|^{1/2} \hbox{ with } C=\max(2\alpha, M).
\end{align*}
This shows one direction in \eqref{eqn:1hol} for $|t-s| \leq \e_0$.
 We can similarly bound $\Lambda^h(t) - \Lambda^h(s)$ from below by considering the same argument for the complement $E_t(h)^\cc$. By making the constant $C$ larger if necessary, we deduce \eqref{eqn:1hol} for any $|t-s| \leq 1$.
\end{proof}

Proposition~\ref{prop:hol} yields the equicontinuity of $E_t(h)$ in the Hausdorff distance.

\begin{cor}
\label{prop:cet}
Under the assumptions and with $h_1$ in Proposition~\ref{prop:hol}, there exist $C$ depending only on $\P, M_\phi, m_{\phi/\psi}, m_\psi$ such that 
\begin{align}
\label{eqn:1cet}
d_H(\partial E_s(h), \partial E_t(h)) \leq C |t-s|^{\frac{1}{2}} \hbox{ for any } |t-s| \leq 1 \hbox{ and } h \in (0, h_1).
\end{align}
\end{cor}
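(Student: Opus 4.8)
The plan is to bound the two one-sided Hausdorff distances between $\partial E_s(h)$ and $\partial E_t(h)$ separately, after first reducing to short time intervals. Since $d_H$ obeys the triangle inequality, it suffices to establish \eqref{eqn:1cet} for $|t-s|\le\e_0$, where $\e_0>0$ is a small constant fixed during the proof: for general $|t-s|\le1$ one then splits $[s,t]$ into $n\le\lceil\e_0^{-1}\rceil$ equal subintervals of length $\le\e_0$ and sums, which multiplies $C$ only by $\lceil\e_0^{-1}\rceil^{1/2}$. So fix $s<t$ with $\ell:=|t-s|^{1/2}\le\e_0^{1/2}$ and $h\in(0,h_1)$. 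By Lemma~\ref{lem:glue} the time-shifted flow $\tau\mapsto E_{s+\tau}(h)$ is a viscosity solution of $V=\psi(\n)(-\kp+\Theta')$ with $\Theta(\tau):=\Lh(s+\tau)$, and by Proposition~\ref{prop:hol} one has $|\Lh(\tau)-\Lh(\tau')|\le q:=C_{\mathrm{hol}}\,\ell$ for all $\tau,\tau'\in[s,t]$ (as $|\tau-\tau'|\le\e_0\le1$), where $C_{\mathrm{hol}}$ is the H\"older constant of that proposition; in particular $\Theta(\tau)\ge\Theta(0)-q$ for $\tau\in[0,t-s]$. The key choice is to run Lemma~\ref{lem:wulff-bound} for this shifted flow with interval length $t-s$ and Wulff-radius $\delta_\ell:=\bigl((N-1)/m_{\phi/\psi}\bigr)^{1/2}\ell$, so that $\tau_{\delta_\ell}=\delta_\ell^2 m_{\phi/\psi}/(N-1)=\ell^2=t-s$: then a single application of the barrier transports information from time $s$ all the way to time $t$.

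\emph{Step 1: bounding $\sup_{x\in\partial E_t(h)}\dist(x,\partial E_s(h))$.} Set $C_1:=1+\bigl(2\delta_\ell+q/m_{\phi/\psi}\bigr)M_\phi/\ell$, a constant with the admissible dependencies; since $W_\phi\subset\overline B_{M_\phi}(0)$ (Lemma~\ref{lem:bw}), the set $x_0+(2\delta_\ell+q/m_{\phi/\psi})W_\phi$ lies in $B_{C_1\ell}(x_0)$. Applying Lemma~\ref{lem:wulff-bound} and its complement version to the shifted flow, and using $0\in W_\phi$, $0\in W_\psi$ so that the transported set contains its base point, I get: $\overline B_{C_1\ell}(x_0)\subset E_s(h)\Rightarrow x_0\in E_t(h)$, and $\overline B_{C_1\ell}(x_0)\subset E_s(h)^\cc\Rightarrow x_0\in E_t(h)^\cc$. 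Taking contrapositives yields the set inclusions $E_t(h)\subset E_s(h)+\overline B_{C_1\ell}(0)$ and $E_t(h)^\cc\subset E_s(h)^\cc+\overline B_{C_1\ell}(0)$. Consequently every $x\in\partial E_t(h)$ satisfies $\dist(x,\overline{E_s(h)})\le C_1\ell$ and $\dist(x,\overline{\R^N\setminus E_s(h)})\le C_1\ell$; joining a realizing point of each by a segment (along which the distance to $x$ is convex, hence $\le C_1\ell$) and using that such a segment must cross $\partial E_s(h)$, since it runs between the open sets $E_s(h)$ and $\R^N\setminus\overline{E_s(h)}$ unless an endpoint already lies on $\partial E_s(h)$, gives $\dist(x,\partial E_s(h))\le C_1\ell$.

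\emph{Step 2: bounding $\sup_{x\in\partial E_s(h)}\dist(x,\partial E_t(h))$.} Here the barrier alone is insufficient because it cannot be run backward in time, so I invoke the cone property. Fix $x\in\partial E_s(h)$; since $\B_r\subset E_s(h)$ by Theorem~\ref{thm:ve} (with $r=r(\P,|\Omega_0|)$ as in Corollary~\ref{cor:lu}), we have $x\in\B_r^\cc$, and Corollary~\ref{cor:lu} provides a basis $A\subset\P$, depending only on the location of $x$, with $y-\co_r(A)\subset E_s(h)$ for $y\in E_s(h)\cap B_r(x)$ and $z+\co_r(A)\subset E_s(h)^\cc$ for $z\in E_s(h)^\cc\cap B_r(x)$. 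After possibly shrinking $\e_0$ so that $C_2\ell\le r$ on the admissible range (with $C_2$ fixed just below), pick $y\in E_s(h)$ and $z\in E_s(h)^\cc$ with $|x-y|,|x-z|<C_2\ell$; then $y-\co_{C_2\ell}(A)\subset E_s(h)$ and $z+\co_{C_2\ell}(A)\subset E_s(h)^\cc$, and by \eqref{eqn:s3} each of these cones contains a ball of radius $\sigma_3 C_2\ell$ centered at a point ($y_-$, resp. $y_+$) within $\tfrac32 C_2\ell$ of $x$. Choosing $C_2$ large enough that $\bigl(2\delta_\ell+q/m_{\phi/\psi}\bigr)M_\phi\le\sigma_3 C_2\ell$ — possible because $\delta_\ell$ and $q$ are fixed multiples of $\ell$ — Lemma~\ref{lem:wulff-bound} and its complement version applied to the shifted flow give $y_-\in E_t(h)$ and $y_+\in E_t(h)^\cc$. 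Thus $\dist(x,\overline{E_t(h)})\le\tfrac32 C_2\ell$ and $\dist(x,\overline{\R^N\setminus E_t(h)})\le\tfrac32 C_2\ell$, and the segment argument of Step~1 yields $\dist(x,\partial E_t(h))\le\tfrac32 C_2\ell$.

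Steps~1 and~2 give \eqref{eqn:1cet} for $|t-s|\le\e_0$ with $C:=\max\{C_1,\tfrac32 C_2\}$, and the reduction of the first paragraph completes the argument. I expect the main obstacle to be precisely this time-asymmetry: Lemma~\ref{lem:wulff-bound} only propagates barriers forward, so controlling how far $\partial E_s(h)$ sits from $\partial E_t(h)$ is not symmetric to Step~1 and must be bootstrapped from the uniform Lipschitz/cone regularity of Corollary~\ref{cor:lu} — this is why that regularity, rather than a bare speed bound, enters. A secondary technical point is the rigid scaling $\delta_\ell\sim\ell$ forced by $\tau_{\delta_\ell}\ge|t-s|$, which makes every shrinkage and drift term in Lemma~\ref{lem:wulff-bound} of order $\ell$ and dictates how small $\e_0$ and (via Proposition~\ref{prop:hol}) $h_1$ must be taken.
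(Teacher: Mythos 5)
Your proposal is correct and follows essentially the same route as the paper: the barrier Lemma~\ref{lem:wulff-bound} run with $\delta\sim|t-s|^{1/2}$, together with the H\"older continuity of $\Lambda^h$ from Proposition~\ref{prop:hol}, sandwiches $E_t(h)$ between inner and outer $O(|t-s|^{1/2})$-neighborhoods of $E_s(h)$ (giving one side of the Hausdorff bound), while the uniform cone/Lipschitz regularity of Corollary~\ref{cor:lu} supplies the other side, and a chaining/triangle-inequality step extends the estimate to all $|t-s|\leq 1$. The only cosmetic difference is in the second step, where you re-apply the barrier from interior and exterior cone balls near $x\in\partial E_s(h)$ rather than, as the paper does, comparing $x$ with the boundaries of the sandwiching sets $\I^\pm(h)$ via the Lipschitz graph property.
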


\begin{proof} 
Using the H\"{o}lder continuity of $\Lh$ in Proposition~\ref{prop:hol}, one can use the barrier argument as in  the proof of Proposition~\ref{prop:cct} to conclude that  for all $0\leq s  <t <s+1$ and  $0<h<h_1$
\begin{align}
\label{eqn:cet11}
\I^-(h) \subset E_t(h) \subset \I^+(h), \quad \hbox{ where } \I^\pm(h) := \{ x \in \Rn : \sd(x, E_s(h)) < \pm C_1 |t-s|^{\frac{1}{2}} \},
\end{align}
for some $C_1$ depending on parameters $\P, M_\phi, m_{\phi/\psi}, m_\psi$. As a consequence, we have
\begin{align}
\label{eqn:cet21}
\sup_{x \in \partial E_t(h)} d(x, \partial E_s(h)) \leq C_1 |t-s|^{\frac{1}{2}} \hbox{ for all } h \in (0, h_1).
\end{align}

\medskip

From the Lipschitz continuity in Corollary~\ref{cor:lu}, there exists $C_2 = C_2(\P) > 1$ such that
\begin{align}
\label{eqn:cet30}
d(x, \partial \I^\pm(h) ) \leq C_1 C_2 |t-s|^{\frac{1}{2}} \quad \hbox{ for all } x \in \partial E_s(h) \hbox{ and } h \in (0, h_1) \hbox{ if } |t-s| \leq \e
\end{align}
for sufficiently small $\e=\e(\P) > 0$.  On the other hand, \eqref{eqn:cet11} implies $d(x, \partial E_t(h)) \leq d(x, \partial \I^\pm(h))$. From this and \eqref{eqn:cet30}, we have
\begin{align*}
\sup_{x \in \partial E_s(h)} d(x, \partial E_t(h)) \leq C_1 C_2 |t-s|^{\frac{1}{2}} \quad \hbox{ for all } h \in (0, h_1) \hbox{ if } |t-s| \leq \e.
\end{align*}
We can now conclude \eqref{eqn:1cet} using the above inequality and \eqref{eqn:cet21}, as well as the triangle inequality.
\end{proof}

\section{The proof of Theorem~\ref{THM:1}}
\label{sec:gexi}

In this section we prove Theorem~\ref{THM:1}. We construct the flow as the limit as $h\to 0$ of the approximate flow with the discrete-time forcing $\lambda_h$ introduced in Section~\ref{sec:lam}.
The main ingredients in the analysis are the cone property and the H\"{o}lder time-continuity of the approximate flow obtained in Section~\ref{sec:lam}. Due to these properties, we can rule out potential fattening of the sets as $h$ tends to zero. Recall that the smoothness of $\phi$ is necessary both for the reflection comparison principle and for the preservation of \eqref{eqn:1as} in Section~\ref{sec:pre}.

\medskip

We resort to the level set approach, which is more convenient for convergence arguments. Let us define
\begin{align}
\label{eqn:uh}
u^h(x,t) := \max \{ -\sd (x, E_t(h)) , a \} \qquad \hbox{for } (x,t) \in \Rn \times \Rpz,
\end{align}
where $E_t(h)$ is as given in Section~\ref{sec:lam}. The value of the constant $a<0$ is not important and is chosen only so that the solution is bounded.

\begin{lem}
\label{lem:uhc}
Let $\phi$ and $\oz$ be as in Theorem~\ref{THM:1}. Then along a subsequence $u^h$ locally uniformly converges to $u$ in $\Rn \times [0,\infty)$ as $h$ tends to zero.
\end{lem}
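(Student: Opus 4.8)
The plan is to extract a locally uniformly convergent subsequence of $u^h$ by an Arzelà--Ascoli argument, so the key is to establish uniform boundedness and equicontinuity of the family $\{u^h\}_{h \in (0,h_1)}$ on compact subsets of $\Rn \times [0,\infty)$. Boundedness is immediate: by definition $u^h$ takes values in $[a,0]$ on $\Rn$, wait---more precisely $u^h(x,t) = \max\{-\sd(x,E_t(h)), a\} \in [a, \infty)$, but by Theorem~\ref{thm:ve} (together with Theorem~\ref{thm:bal}) the sets $E_t(h)$ are contained in a fixed ball $B_{R_0}(0)$ with $R_0$ depending only on $\P, M_\phi, m_{\phi/\psi}, m_\psi$ and $|\Omega_0|$, uniformly in $t \geq 0$ and $h \in (0,h_0)$. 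Hence $-\sd(x, E_t(h)) \leq R_0$ whenever $x \in B_{2R_0}(0)$, and $u^h \equiv a$ outside a fixed compact set, giving a uniform bound on every compact set.

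For equicontinuity, I would treat the space and time variables separately. In space, $x \mapsto -\sd(x, E_t(h))$ is $1$-Lipschitz for every fixed $t$ and $h$, and the truncation $\max\{\cdot, a\}$ preserves this, so $u^h(\cdot, t)$ is $1$-Lipschitz uniformly. In time, the signed distance functions to two sets $E_s(h)$ and $E_t(h)$ differ, in sup norm, by at most the Hausdorff distance $d_H(\partial E_s(h), \partial E_t(h))$ between their boundaries---more carefully, by $d_H(E_s(h), E_t(h))$ together with the boundary Hausdorff distance; since both sets contain the fixed ball $\B_r$ and are Lipschitz domains with uniform constants (Corollary~\ref{cor:lu}), control of $d_H(\partial E_s(h), \partial E_t(h))$ controls $\|\sd(\cdot, E_s(h)) - \sd(\cdot, E_t(h))\|_{L^\infty(\Rn)}$ up to a uniform multiplicative constant. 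Corollary~\ref{prop:cet} gives exactly $d_H(\partial E_s(h), \partial E_t(h)) \leq C|t-s|^{1/2}$ for $|t-s| \leq 1$ and $h \in (0,h_1)$, uniformly in $h$. Combining, $|u^h(x,t) - u^h(y,s)| \leq |x - y| + C'|t-s|^{1/2}$ for all $h \in (0,h_1)$, $|t-s|\leq 1$, which is the desired uniform equicontinuity.

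With uniform boundedness and uniform equicontinuity on every compact subset of $\Rn \times [0,\infty)$ in hand, the Arzelà--Ascoli theorem (applied on an exhausting sequence of compacta together with a diagonal extraction) yields a subsequence $h_k \to 0$ along which $u^{h_k}$ converges locally uniformly to some continuous $u$ on $\Rn \times [0,\infty)$; since the $u^h$ are uniformly $1$-Lipschitz in space and eventually constant $(=a)$ outside a fixed ball, the limit $u$ inherits $u - a \in C_c(\Rn)$ for each $t$, consistent with working in $C_a(\Rn)$. The main---and only slightly delicate---point is the passage from the Hausdorff convergence of boundaries in Corollary~\ref{prop:cet} to the uniform time-modulus of the signed distance functions; this is where the Lipschitz regularity of $E_t(h)$ with uniform constants (Corollary~\ref{cor:lu}) is essential, since for general closed sets the signed distance is not controlled by the Hausdorff distance of boundaries alone. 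Everything else is a routine compactness argument, and the identification of $u$ as a viscosity solution of \eqref{main} (via the stability Theorem~\ref{thm:sta} and the fact that $\Lambda^h \to \Lambda$ locally uniformly by Proposition~\ref{prop:hol}) is deferred to the subsequent parts of Section~\ref{sec:gexi}.
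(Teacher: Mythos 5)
Your argument is correct and follows essentially the same route as the paper: Arzel\`a--Ascoli, uniform boundedness via the uniform boundedness of $E_t(h)$ (Corollary~\ref{cor:lu}), $1$-Lipschitz continuity in space from the definition of $u^h$, and H\"older equicontinuity in time via Corollary~\ref{prop:cet}. Your extra remark that the passage from the boundary Hausdorff estimate to a sup-norm bound on the signed distance functions uses the uniform Lipschitz/cone regularity of the sets is a valid point that the paper's proof leaves implicit.
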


\begin{proof}
By Arzel\`{a}--Ascoli, it is enough to show that $\{u^h\}$ is uniformly bounded and equi-continuous for sufficiently small $h$. For the bound, the lower bound follows from its definition, and the upper bound is a consequence of Corollary~\ref{cor:lu}.

$u^h$ is $1$-Lipschitz in space for all $h > 0$ due to its definition, and furthermore
$$
| u^h(x,t) - u^h(x,s) | \leq | \sd(x, E_t(h)) - \sd(x, E_s(h)) | \hbox{ for all } s,t \in [0,\infty) \hbox{ with } |t-s| <1.
$$
As the evolving sets $\{E_t(h)\}_{t \in [0,T]}$ for $h \in (0, h_0)$ are uniformly H\"{o}lder continuous in the Hausdorff distance from Proposition~\ref{prop:cet}, we conclude. 
\end{proof}

\begin{prop}
\label{lem:cpt}
Let $\oz$ and $\phi$ be as given in Theorem~\ref{THM:1}. For $u$ given in Lemma~\ref{lem:uhc} and the corresponding subsequence $\{h_i\}_i$, we have 
\begin{align}
\label{eqn:1nof}
\limsups_{i \rightarrow \infty} \chi_{E_t(h_i)} = \chi_{\overline{\ot}}\,\, \hbox{ and } \,\, \liminfs_{i \rightarrow \infty} \chi_{E_t(h_i)} = \chi_{\ot} \hbox{ for each } t\geq 0,\end{align}
where
\begin{align*}
\ot := \{ u(\cdot,t) > 0\} \hbox{ and } \overline{\ot}^{\cc} = \{u(\cdot,t)<0\}. 
\end{align*}
\end{prop}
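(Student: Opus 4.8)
The plan is to identify the half-relaxed limits of the indicator functions $\chi_{E_t(h_i)}$ with the indicators of the open and closed super-level sets of the limit function $u$, using that $u^{h_i}$ converges locally uniformly to $u$ and that each $u^{h_i}$ is precisely a truncated signed-distance function to $E_t(h_i)$. The key point is that \emph{no fattening} occurs: the set $\{u(\cdot,t)=0\}$ has empty interior, so $\overline{\{u(\cdot,t)>0\}}^{\cc}=\{u(\cdot,t)<0\}$ and the two limits in \eqref{eqn:1nof} agree with $\chi_{\overline{\ot}}$ and $\chi_{\ot}$. The mechanism ruling out fattening is the uniform cone property for $E_t(h)$ from Corollary~\ref{cor:lu}, which passes to the limit and forces $\partial\ot$ to be a Lipschitz (hence Lebesgue-null, locally graph-like) hypersurface, so no thick plateau of $u$ can form.

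First I would record two elementary inclusions valid at each fixed $t$. If $x\in\ot$, i.e.\ $u(x,t)>0$, then by local uniform convergence $u^{h_i}(x,t)>0$ for $i$ large, so $x\in E_t(h_i)$ eventually, giving $\liminfs_i \chi_{E_t(h_i)}(x)\geq 1$; conversely if $u(x,t)<0$, then $u^{h_i}(x,t)<0$ for $i$ large, so $x\notin E_t(h_i)$ eventually and $\limsups_i \chi_{E_t(h_i)}(x)\leq 0$. Since the half-relaxed limits are by definition lower/upper semicontinuous, we get $\liminfs_i\chi_{E_t(h_i)}\geq \chi_{\ot}$ and $\limsups_i\chi_{E_t(h_i)}\leq \chi_{\overline{\ot}}$ pointwise, where $\overline{\ot}=\{u(\cdot,t)\geq 0\}$ provided $\{u(\cdot,t)<0\}=(\overline{\ot})^{\cc}$. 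The reverse inequalities $\limsups_i\chi_{E_t(h_i)}\geq\chi_{\overline{\ot}}$ and $\liminfs_i\chi_{E_t(h_i)}\leq\chi_{\ot}$ would follow immediately if $\{u(\cdot,t)=0\}$ had empty interior, because then for $x\in\overline{\ot}$ every neighborhood of $x$ meets $\{u(\cdot,t)>0\}$, so by the previous step meets $E_t(h_i)$ for large $i$, forcing the $\limsups$ of nearby values to be $1$; symmetrically for the closed complement.

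Thus the crux reduces to showing $\operatorname{int}\{u(\cdot,t)=0\}=\emptyset$ for every $t$. Here I would use the uniform cone property from Corollary~\ref{cor:lu}: for each $h\in(0,h_0)$ and each boundary point of $E_t(h)$ there is a fixed-size exterior and interior $r$-cone with directions depending only locally on the base point (and $r=r(\P,|\Omega_0|)$ independent of $h,t$). Passing to the Hausdorff limit along $h_i\to 0$ (the sets are uniformly bounded by Theorem~\ref{thm:ve} and Theorem~\ref{thm:bal}, and equicontinuous in time by Corollary~\ref{prop:cet}), this cone property is inherited by the limit set $\ot$: at every point of $\partial\ot$ there is an interior and an exterior $r$-cone. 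Consequently $\partial\ot$ is locally the graph of a Lipschitz function and has zero Lebesgue measure, and moreover $\{u(\cdot,t)<0\}=(\overline{\ot})^{\cc}$ since a point $x$ with a small exterior cone at a nearby boundary point, or an interior point of $(\ot)^{\cc}$, is at negative distance from $E_t(h_i)$ for large $i$. Since $u(\cdot,t)$ is $1$-Lipschitz and equals $-\sd(\cdot,\ot)$ truncated at $a$ near $\partial\ot$ (as the locally uniform limit of $u^{h_i}=\max\{-\sd(\cdot,E_{\cdot}(h_i)),a\}$), the zero set of $u(\cdot,t)$ is contained in $\partial\ot$, which has empty interior. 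This completes the identification \eqref{eqn:1nof}.

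The main obstacle I anticipate is the passage of the cone property from the approximate sets $E_t(h_i)$ to the limit $\ot$, and in particular checking that the limit set is "clean," i.e.\ $\overline{\{u(\cdot,t)>0\}}^{\cc}=\{u(\cdot,t)<0\}$ with $\partial\ot$ null. One must be careful that the locally constant cone directions in Theorem~\ref{thm:lpb} are genuinely independent of $h$ (which they are, as they depend only on $x_0$ and $\P$ through Lemma~\ref{lem:nv}), and that the uniform time-equicontinuity in Corollary~\ref{prop:cet} lets us select, for each fixed $t$, a Hausdorff-convergent subsequence whose limit coincides with $\{u(\cdot,t)>0\}$ and $\{u(\cdot,t)<0\}$ on the nose rather than merely up to the boundary. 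Once the cone property is in hand for the limit, the measure-theoretic statement is routine.
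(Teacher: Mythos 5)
Your first paragraph of elementary inclusions is fine, and your overall strategy is the same as the paper's: use the uniform cone property of $E_t(h)$ from Corollary~\ref{cor:lu} to rule out fattening and then identify the half-relaxed limits. However, the execution of the crucial step has a genuine gap. The reduction of the crux to ``$\operatorname{int}\{u(\cdot,t)=0\}=\emptyset$'' is not sufficient: empty interior of the zero set does not imply $\overline{\ot}^{\cc}=\{u(\cdot,t)<0\}$ (which you need for $\limsups_i\chi_{E_t(h_i)}=0$ off $\overline{\ot}$), nor does it produce, at a point $x$ with $u(x,t)=0$, points near $x$ lying \emph{outside} $E_t(h_i)$ for all large $i$ (which you need for $\liminfs_i\chi_{E_t(h_i)}(x)=0$); a neighborhood of a zero point could a priori meet only $\{u>0\}\cup\{u=0\}$. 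What is actually required is a two-sided nondegeneracy at zero points of $u(\cdot,t)$: $u>0$ on an interior cone and $u<0$ on an exterior cone attached at $x$.

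Moreover, your mechanism for obtaining this --- that ``the cone property is inherited by the limit set $\ot$'' by passing to the Hausdorff limit, together with the claim that $u(\cdot,t)$ equals the truncated signed distance to $\ot$ --- is asserted rather than proved, and as stated it is circular: knowing that the Hausdorff limit of $E_t(h_i)$, resp.\ $\partial E_t(h_i)$, coincides with $\ot$, resp.\ $\partial\ot$, is essentially the no-fattening statement you are trying to prove. Also, a locally uniform limit of truncated signed distance functions need not be a truncated signed distance function in the absence of nondegeneracy (take $E_i$ a union of tiny balls centered at a $1/i$-net of a fixed ball: the signed distances tend to $0$ on an open set), so that identity cannot be used as an input; it is precisely the cone property that excludes such behavior. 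The non-circular route, which is the one the paper takes, works at the level of the functions: if $u(x,t)=0$, then since $u^{h_i}\to u$ locally uniformly and each $u^{h_i}(\cdot,t)$ is a truncated signed distance to $E_t(h_i)$, there exist $y_i\in\partial E_t(h_i)$ with $y_i\to x$; picking $x_0\in\B_r^{\cc}\cap B_{r/2}(x)$ and the basis $A\subset\P$ from Theorem~\ref{thm:lpb}, the cone property \eqref{eqn:lpb12} applied to $E_t(h_i)$ gives $-\sd(\cdot,y_i-\co_r(A))\le u^{h_i}(\cdot,t)\le \sd(\cdot,y_i+\co_r(A))$ on $B_{|a|}(y_i)$, and passing to the limit sandwiches $u(\cdot,t)$ near $x$. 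This yields simultaneously $x\in\overline{\ot}$ (hence $u<0$ on $(\overline{\ot})^{\cc}$, which gives the $\limsups$ identity) and points arbitrarily close to $x$ where $u<0$ (which gives the $\liminfs$ identity at zero points). With this replacement of your inheritance step, your outline becomes the paper's proof.
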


\begin{proof}
We only show the first equality in \eqref{eqn:1nof}, since the other can be shown by parallel arguments.  If  $u(x,t)>0$,  $u^{h_i}(x,t) > 0$ for sufficiently large $i$. Thus, if we define $\ot:= \{u(\cdot,t)>0\}$, then  
\begin{align*}
\limsups_{i \rightarrow \infty} \chi_{E_t({h_i})}(x,t) = 1 \hbox{ for all } x\in \overline{\ot}.
\end{align*}

Next we show that $u(\cdot, t)$ is negative outside of $\overline{\ot}$. Fix $x$ with $u(x,t) = 0$. By the uniform convergence $u^{h_i}$ to $u$ and the definition of $u^{h_i}$ as the signed distance function, there exists $y_i \to x$ with $u^{h_i}(y_i, t) = 0$. 
 Pick $x_0 \in \B_r^\cc \cap B_{r/2}(x)$ and let $A := \{p_j\}_{j=1}^N \subset \P$ be the basis from Theorem~\ref{thm:lpb} for $x_0$.  It follows from \eqref{eqn:lpb12} applied to $E_t(h_i)$ that 
\begin{equation}\label{cone2}
-\sd(\cdot, y_i - \co_r(A)) \leq u^{h_i}(\cdot, t) \leq \sd(\cdot, y_i +\co_r(A)) \qquad \hbox{on } B_{|a|}(y_i),
\end{equation}
where $a$ is the constant from \eqref{eqn:uh}.
Due to the local uniform convergence $u^{h_i} \to u$, \eqref{cone2} holds for $u$ too with $x$ in place of $y_i$, and we conclude that $x \in \overline{\{u(\cdot, t) >0\}} = \overline{\ot}$ and so $u(\cdot, t) < 0$ in $(\overline{\ot})^\cc$.

\medskip

Now, if $x\in (\overline{\ot})^{\cc}$, then $u(\cdot,t)<0$ in a compact neighborhood of $x$ and thus $u^{h_i}(\cdot,t)<0$ for sufficiently large $i$ in the same neighborhood. Thus we conclude that
\begin{align*}
\limsups_{i \rightarrow \infty} \chi_{E_t({h_i})}(x,t) = 0.
\end{align*}
\end{proof}

Let us finish this section with carefully verifying the statements of Theorem~\ref{THM:1}.

\medskip

{\bf Proof of Theorem~\ref{THM:1}.}

\medskip

Let $\ott$ be as given in Proposition~\ref{lem:cpt}. From Proposition~\ref{prop:hol}, along a subsequence $\Lh$ locally uniformly converges to $\Lam\in C^{1/2}([0,\infty))$ as $h \to 0$. We combine Lemma~\ref{lem:glue} with Proposition~\ref{lem:cpt} and Theorem~\ref{thm:sta} to conclude that $\ott$ is a viscosity solution of $V = \psi(\n)(-\kp + \Lam')$.

\medskip

As $E_t(h)$ satisfies \eqref{eqn:1as} and contains $\B_r$ due to Theorem~\ref{thm:ve}, Proposition~\ref{lem:cpt} yields the same property for  $\ot$.  Moreover  $\ot$ is uniformly bounded for all $t\geq 0$: this follows from Theorem~\ref{thm:bal} and the  fact that $\ot$ contains $\B_r$. Hence one can find a finite number of neighborhoods $\mathcal{O}_i:=B_r(x_i)$ with $x_i \in \B_r^\cc$, such that $\bigcup_i (\mathcal{O}_i \times [0,\infty))$ contains $\Gamma$. 

\medskip

As explained in the proof of Proposition~\ref{lem:cpt}, from the local uniform convergence of $u^h$ to $u$ and their non-degeneracy it follows that \eqref{eqn:lpb12} holds for $\ott$, where $A$ is the basis given from Theorem~\ref{thm:lpb}. In particular it follows that $\partial\ot$ has interior and exterior $r$-cone properties in each $B_r(x_i)$, with the axis of the cone $\nu_i$ only depending on  $x_i$. In other words $\partial\ot$ in each set $\mathcal{O}_i$ can be represented as a Lipschitz graph $\{x\cdot \nu_i = f_i(x',t)\}$, where $x' = x - (x\cdot\nu_i)\nu_i$, with $f_i(\cdot,t)$ having uniform Lipschitz constant over $t$.

\medskip

 The H\"{o}lder regularity of $f_i$ in time follows from
 $$
  d_H(\partial\ot, \partial\Omega_s) \leq C |t-s|^{\frac{1}{2}} \hbox{ for any } |t-s| \leq 1
  $$
 with $C$ only depending on $\phi$ and $\psi$. This is a consequence of Corollary~\ref{prop:cet} and the fact that $d_H(\partial\ot, \partial E_t(h))$ tends to zero as $h\to 0$, for each fixed $t\geq 0$.  We can now conclude.

\medskip

Lastly, it remains to show that $|\ot| = |\oz|$ for all $t \geq 0$. Since $\Omega_t$ is a Lipschitz domain, we have $|\ot| = |\overline{\ot}|$. Proposition~\ref{lem:cpt} and Fatou's lemma yield that
\begin{align*}
|\overline{\ot}| = \int_{\Rn} \limsups_{i \rightarrow \infty} \chi_{E_t(h_i)}(x) \;dx \geq \limsup_{i \rightarrow \infty} \int_{\Rn}  \chi_{E_t(h_i)}(x) \;dx = \limsup_{i \rightarrow \infty} |E_t(h_i)|
\end{align*}
and 
\begin{align*}
|\ot| = \int_{\Rn} \liminfs_{i \rightarrow \infty} \chi_{E_t(h_i)}(x) \;dx \leq \liminf_{i \rightarrow \infty} \int_{\Rn}  \chi_{E_t(h_i)}(x) \;dx = \liminf_{i \rightarrow \infty} |E_t(h_i)|.
\end{align*}
From Theorem~\ref{thm:ve} we have $\lim_{i \rightarrow \infty} |E_t(h_i)| = |\oz|$, and  thus we conclude. It follows that $(\ott, \Lam)$ is a viscosity solution of \eqref{model} in the sense of Definition~\ref{def:model}, and we can conclude the theorem.
\hfill$\Box$

\section{Global existence: Crystalline flows}
\label{sec:global-existence}

In this last section we focus on the remaining case of Lipschitz but non-differentiable anisotropy $\phi$. Given $\phi$, let $\{\phi_n\}$ be a sequence of positively one-homogeneous, convex functions in $C^2(\Rn\setminus\{0\})$ such  that $\phi_n\to\phi$ locally uniformly with
\begin{itemize}
\item $\{\phi_n < 1\}$ are strictly convex; 
\item all $\phi_n$ have a reflection symmetry with respect to the same root system $\mathcal P$ as $\phi$.
\end{itemize}
 For example, we can take positively one-homogeneous functions $\phi_n$ such that $\{\phi_n < 1\} = \{p :(\phi * \eta_{1/n})(p) + |p|^2 / n < 1\}$, where $\eta_{1/n}$ is the standard mollifier with radius $1/n$.
 
 \medskip
 
  We can then follow Section~\ref{sec:lam} to construct a sequence of approximate solutions for the anisotropy $\phi_n$ with forcing $\Lambda_n$. Using this approximation, we present two results which characterize our limit with the available notion of solutions of \eqref{model}.

\medskip

If $\phi \in C^2(\Rn \setminus \{0\})$, this would  immediately follow from the stability properties of viscosity solutions. When $\phi$ is only Lipschitz and its graph has corners, that is, more than one tangent hyperplane at a point, there are challenges associated to defining a notion of solutions, even with smooth forcing. As we already explained in Section~\ref{sec:intro}, the main challenge lies in the nonlocality of the evolution as the crystalline curvature depends on the size and shape of flat facets parallel to the facets of the Wulff shape, and is in fact very sensitive (discontinuous) to \emph{facet breaking} and \emph{facet bending}.  The crystalline curvature is usually constant on facets and they are therefore preserved in the evolution, but for certain geometries of facets or for non-uniform forcing the curvature might not be a constant and facets break or bend \cite{BNP99}. In dimension $N = 2$ the flow is relatively well understood and a notion of viscosity solutions has been defined \cite{GigaGiga98,GigaGiga01}. In dimensions $N \geq 3$ there are two notions of solutions only recently available. 

\medskip

In Section~\ref{sec:general-mobility} we discuss the notion of viscosity solutions  introduced by Giga and Pozar~\cite{GigPoz16,GigPoz18}, based on the level set functions. This notion allows a general mobility $\psi$ but requires the anisotropy $\phi$ to be purely crystalline. In Section~\ref{se:phi-regular-mobility} we discuss the alternative notion by Chambolle, Morini, Novaga and Ponsiglione~\cite{CMP17,CMNP19}. Here variational approach is used to introduce a notion of solutions, using the signed distance function to the evolving set. This notion directly deals with the set evolution and thus fits well with the approach taken in the preceding sections.  On the other hand, when the mobility $\psi$ is not {\it $\phi$-regular} such as $\psi \equiv 1$, a solution is indirectly defined as a limit of solutions with $\phi$-regular mobilities.

\subsection{General mobility for purely crystalline case}
\label{sec:general-mobility}

In this section we assume $\phi$ to be a {\it purely crystalline} anisotropy, that is, $\phi(p) = \max_{i} (x_i \cdot p)$, where $x_i$ are the vertices of the corresponding Wulff shape. This is so that we can use the notion of viscosity solutions developed in \cite{GigPoz16,GigPoz18,GigPoz20}.

\medskip

In contrast to Section~\ref{se:phi-regular-mobility}, we can consider a mobility $\psi$ that is not $\phi$-regular, as long as it is reflection symmetric with respect to the root system $\P$.  On the other hand,  in this generality the only stability result currently available in \cite{GigPoz20} is for limits of continuous solutions of the problem with smooth anisotropy. For this reason we follow Section~\ref{sec:lam} but \emph{without} the re-initialization of the distance function after every $h$-step to construct a sequence of continuous approximate solutions. This way we obtain approximating solutions for the level set equation. 

\medskip

 Due to issues with possible fattening and consequent non-uniqueness, it is not clear whether the zero super-level set of the limit solution has the correct volume. Thus our existence result for the set flow \eqref{model} is under the assumption of no fattening; see Theorem~\ref{th:crystalline-existence-no-fat}.   

\medskip

Let us fix $n \in \N$, $h = 1/n$ and $a < 0$ throughout this section. For given fixed $n \in \N$ the algorithm is parallel to that of Section~\ref{sec:lam} except the re-initialization step as we describe. With initial data $u_0$ in \eqref{freezing-init-data} and $\lambda_n(t) := \lambda(E, 1/n)$ for $t \in (0, h)$ where $\lambda$ is defined in \eqref{eqn:lam}, we find the unique continuous viscosity solution  $u_n \in C_a(\Rn \times [0, h])$ of the level set formulation of 
$$V = \psi(\n) (- \kappa_{\phi_n} + \lambda_n) \hbox{ in } \Rn \times [0, h], $$
 with initial data $u_0$. Then for every $k \in \N$, we iteratively define $\lambda_n(t) := \lambda(\{u_n(\cdot, kh) > 0\}, 1/n)$ for $t \in (kh, (k+1)h)$ and extend $u_n$ to $C_a(\Rn \times [0, (k+1)h])$ to be the unique continuous viscosity solution of the level set formulation of $V = \psi(\n)(- \kappa_{\phi_n} + \lambda_n)$ in $\Rn \times [kh, (k+1)h]$ with initial data $u_n(\cdot, kh)$ (that is, we do not reinitialize the data as the signed distance function to $\{u(\cdot, kh) > 0\}$).

\medskip

Note that 
\begin{equation}\label{um-Et}
\{u_n(\cdot, t) > 0\} = E_t(1/n)\qquad \text{and} \qquad \{u_n(\cdot, t) \geq 0\} = \overline{E_t(1/n)}.
\end{equation}
Indeed, on the interval $[0, h)$ the equality holds by the definition of $E_t(h)$ and no fattening is established in Corollary~\ref{cor:lu}. Furthermore, the absence of fattening also implies that reinitializing the signed distance function in \eqref{freezing-init-data} using $E = E_h(h)$ to continue the construction of $E_t(h)$ on the interval $[h, 2h)$ does not change the zero level set and therefore the equality continues to hold on this interval. Iteratively we conclude that the equality holds for all $t \geq 0$.   

Proposition~\ref{prop:hol} yields a constant $h_1 =h_1(\P, m_\psi, M_\phi, m_{\phi/\psi})>0$ such  that for $n >1/h_1$ we have uniform H\"{o}lder continuity for $\Lambda_n(t) := \int_0^t \lambda_n(s)\;ds$. Thus along a subsequence $\Lambda_n$ locally uniformly converges to $\Lambda\in C^{1/2}([0,\infty))$.

\medskip

By viscosity solutions in the sense of Definition~\ref{def:visl} below, we mean the generalization of viscosity solutions defined in \cite{GigPoz20} to a continuous $\Lambda$ as in Definition~\ref{def:visl},  using~\eqref{eqn:the2} in the sense of \cite{GigPoz20}.

\begin{prop}
\label{pr:crystalline-half-limits}
The functions
\begin{align*}
\overline u := \limsups_n u_n, \qquad \underline u := \liminfs_n u_n
\end{align*}
are a viscosity subsolution and a viscosity supersolution, respectively, of the crystalline mean curvature flow $V= \psi(\vec{n})(- \kappa_{\phi}+ \Lambda')$ in the sense of Definition~\ref{def:visl}.
\end{prop}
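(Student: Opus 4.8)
The plan is to reduce the statement, via Definition~\ref{def:visl}, to a stability assertion for the \emph{fixed-forcing} crystalline equation, and then to invoke the stability of viscosity solutions of crystalline mean curvature flow under smooth approximation of the anisotropy from \cite{GigPoz20}. I will only describe the subsolution half; the supersolution half is symmetric, replacing sup-convolutions over $W_\psi$ by inf-convolutions over $-W_\psi$, the inequality $\Theta > \Lambda$ by $\Theta < \Lambda$, and $\limsups$ by $\liminfs$. First, by the gluing argument of Lemma~\ref{lem:glue} (now with the smooth anisotropy $\phi_n$ and the piecewise-constant forcing $\lambda_n$), each $u_n$ is a viscosity solution of \eqref{main} with $\Lambda = \Lambda_n$ in the sense of Definition~\ref{def:visl}. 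Uniform bounds on $u_n$, and hence on $\overline u$ and $\underline u$ (so that $\overline u < \infty$ and $\underline u > -\infty$), follow from comparison with shrinking Wulff-shape barriers together with the uniform boundedness of $\{u_n(\cdot,t)>0\} = E_t(1/n)$ provided by Theorems~\ref{thm:ve} and~\ref{thm:bal}.

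Next, I fix $0 \le t_1 < t_2$ and $\Theta \in C([t_1,t_2]) \cap C^1((t_1,t_2))$ with $\Theta > \Lambda$ on $[t_1,t_2]$, and set $\delta := \min_{[t_1,t_2]}(\Theta - \Lambda) > 0$. Since $\Lambda_n \to \Lambda$ uniformly on $[t_1,t_2]$, there is $N_0$ with $\Theta > \Lambda_n$ on $[t_1,t_2]$ for $n \ge N_0$, so by Definition~\ref{def:visl} the sup-convolution $\widehat{u_n}(\cdot; \Theta - \Lambda_n)$ is a viscosity subsolution of $u_t = \psi(-Du)(-\dv D\phi_n(-Du) + \Theta')$ on $(t_1,t_2)\times\Rn$ for all $n \ge N_0$. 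Because $\phi_n \to \phi$ locally uniformly with $\phi$ purely crystalline and the forcing $\Theta$ is now fixed and smooth, the stability result of \cite{GigPoz20} — applied in its one-sided form to the upper half-relaxed limit of subsolutions, whose hypotheses are met since the $\widehat{u_n}(\cdot;\Theta-\Lambda_n)$ are continuous (inheriting continuity from $u_n$ and from $\Lambda_n$) — gives that $v := \limsups_{n} \widehat{u_n}(\cdot; \Theta - \Lambda_n)$ is a viscosity subsolution of $u_t = \psi(-Du)(-\dv D\phi(-Du) + \Theta')$ on $(t_1,t_2)\times\Rn$ in the sense of \cite{GigPoz20}.

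It remains to identify $v$ with $\widehat{\overline u}(\cdot; \Theta - \Lambda)$, which then shows, $\Theta$ being an arbitrary admissible test forcing, that $\overline u$ is a viscosity subsolution of $V = \psi(\n)(-\kappa_\phi + \Lambda')$ in the generalized sense. Write $\gamma := \Theta - \Lambda \ge \delta$ and $\gamma_n := \Theta - \Lambda_n$. For each $\e \in (0,\delta)$ and all large $n$ we have $\gamma_n - \gamma \in [-\e,\e]$ on $[t_1,t_2]$; since $0 \in W_\psi$ and $W_\psi$ is convex, the scaling $s \mapsto s W_\psi$ is monotone for $s \ge 0$, so $(\gamma-\e)W_\psi \subseteq \gamma_n W_\psi \subseteq (\gamma+\e)W_\psi$ and hence $\widehat{u_n}(\cdot;\gamma-\e) \le \widehat{u_n}(\cdot;\gamma_n) \le \widehat{u_n}(\cdot;\gamma+\e)$ pointwise. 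Taking $\limsups_n$ and using the commutation of the sup-convolution over the \emph{fixed} shape $(\gamma\pm\e)W_\psi$ with the upper half-relaxed limit — the Wulff-shape analogue of \cite[Lemma~C.8]{KimKwo18b} — yields $\widehat{\overline u}(\cdot;\gamma-\e) \le v \le \widehat{\overline u}(\cdot;\gamma+\e)$. Letting $\e \to 0$ and using that $\overline u$ is upper semicontinuous, that $W_\psi$ is a compact convex body with $0$ in its interior, and that $\gamma \ge \delta > 0$, the bounds $\widehat{\overline u}(\cdot;\gamma\pm\e)$ converge to $\widehat{\overline u}(\cdot;\gamma)$ from above and below, so $v = \widehat{\overline u}(\cdot;\Theta-\Lambda)$.

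The main obstacle is the coupling of the two limiting processes: the convolution parameter $\Theta - \Lambda_n$ and the anisotropy $\phi_n$ vary simultaneously, while the target notion of solution is the nonlocal crystalline one. Handling this requires, on the one hand, the $\e$-sandwich identification of $\limsups_n \widehat{u_n}(\cdot;\Theta-\Lambda_n)$ with $\widehat{\overline u}(\cdot;\Theta-\Lambda)$, for which the commutation of sup-convolutions with half-relaxed limits must be transported from the isotropic ball-convolutions of \cite{KimKwo18b} to the Wulff-shape convolutions used here; and, on the other hand, a careful use of the crystalline stability result of \cite{GigPoz20} in its one-sided form, checking that its hypotheses (a fixed smooth forcing and continuous approximating subsolutions for the smooth anisotropies $\phi_n$) are indeed satisfied in the present setup.
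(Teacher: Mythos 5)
Your overall strategy coincides with the paper's: fix an admissible test forcing $\Theta>\Lambda$, note that for large $n$ the sup-convolutions $\hu_n(\cdot;\Theta-\Lambda_n)$ are subsolutions of the smooth problems with the fixed forcing $\Theta'$, pass to the crystalline limit in $n$, and identify $\limsups_n \hu_n(\cdot;\Theta-\Lambda_n)$ with $\widehat{\overline u}(\cdot;\Theta-\Lambda)$. However, two steps have genuine gaps. First, the passage $\phi_n\to\phi$ cannot be handled by quoting a ``one-sided stability result of \cite{GigPoz20}'' for upper half-relaxed limits of subsolutions: as the paper itself points out, the stability available there is for limits of \emph{continuous solutions} of the problems with smooth anisotropy, and no theorem in the required generality (varying anisotropies, subsolutions only) is available to cite. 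The paper instead inserts a second regularization $\phi_{n,\delta}(p)=(\phi_n*\eta_\delta)(p)+\delta|p|^2$, whose operator is uniformly elliptic, checks that $\hu_{n,\delta}(\cdot;\Theta-\Lambda_n)$ are subsolutions of the corresponding equations and that $\hu_{n,\delta}\to\hu_n$ locally uniformly as $\delta\to0$, and then reruns the perturbed test function proof of \cite[Th.~8.9]{GigPoz16} with the time-dependent-forcing modifications of \cite{GigPoz20}; the uniform ellipticity of the $\delta$-approximation is explicitly needed for that method. Your proposal leaves this, which is the heart of the proposition, unproved.

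Second, your identification step fails in the lower direction as written. The sandwich gives $\widehat{\overline u}(\cdot;\gamma-\e)\le v\le\widehat{\overline u}(\cdot;\gamma+\e)$, and as $\e\to0$ the left-hand side increases only to the supremum of $\overline u(\cdot,t)$ over $x-\operatorname{int}\bigl(\gamma(t)W_\psi\bigr)$, which for a merely upper semicontinuous $\overline u$ can be strictly smaller than $\widehat{\overline u}(x,t;\gamma)$ (take $\overline u(\cdot,t)$ with a strict maximum at a boundary point of $x-\gamma(t)W_\psi$; such limits do arise from continuous $u_n$). So you obtain $v\le\widehat{\overline u}(\cdot;\gamma)$ but not the equality, and knowing that $v$ is a subsolution then says nothing about $\widehat{\overline u}(\cdot;\gamma)$. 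The correct argument is the one in Lemma~\ref{le:uhat-limsup}: pick $y$ attaining $\widehat{\overline u}(x,t;\gamma)$, take $(y_k,t_k)\to(y,t)$ and indices $n_k$ realizing $\overline u(y,t)$ in the half-relaxed limit, and recenter by choosing $x_k\to x$ with $y_k\in x_k-\gamma_{n_k}(t_k)W_\psi$; this exploits the moving-point structure of $\limsups$ rather than a monotone limit in the radius. Both gaps are repairable (the second is exactly what Lemma~\ref{le:uhat-limsup} provides), but as written the proof is incomplete.
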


\begin{proof}
We modify the proof of \cite[Th.~8.9]{GigPoz16}. For $\delta > 0$ we can approximate every $\phi_n$ by the smooth $\phi_{n,\delta}(p) := (\phi_n * \eta_\delta) + \delta|p|^2$ and find viscosity solutions $u_{n,\delta}$ of 
\begin{align}
\label{unif-elliptic}
u_t = \psi(-D u) (-\dv D \phi_{n,\delta}(-D u) + \lambda_n)
\end{align}
with initial data $u(\cdot, 0) = u_0$ as above. By the stability result for the viscosity solutions of the smooth anisotropic mean curvature flow, $u_{n, \delta} \to u_n$ locally uniformly. Note that this approximation by solutions with uniformly elliptic operator $-\dv D \phi_{n,\delta}(-D u)$ is necessary in the proof of \cite[Th.~8.9]{GigPoz16} to be able to use the perturbed test function method.

Let us now take any $\Theta \in C([t_1, t_2]) \cap C^1((t_1, t_2))$ with $\Theta > \Lambda$ on $[t_1, t_2]$. For $n$ sufficiently large, $\Theta - \Lambda_n > 0$ on $[t_1, t_2]$ due  to the local uniform convergence of $\Lambda_n$ to  $\Lambda$. For these $n$, by definition and by following the proof of Lemma~\ref{lem:glue} the function $\hu_{n,\delta}(\cdot; \Theta - \Lambda_n)$ (see \eqref{eqn:sup} for the notation) is a viscosity subsolution of 
\begin{align*}
u_t =\psi(-Du)(- \dv D \phi_{n, \delta}(-D u) + \Theta')
\end{align*}
in $\Rn \times (t_1, t_2)$. 
We note that
\begin{align*}
\hu_{n, \delta}(\cdot, \Theta - \Lambda_n) \to \hu_n(\cdot, \Theta - \Lambda_n) \quad \text{as } \delta \to 0
\end{align*}
locally uniformly and, by Lemma~\ref{le:uhat-limsup},
\begin{align*}
\hat{\overline u}(\cdot; \Theta - \Lambda) = \limsups_n \hu_n(\cdot; \Theta - \Lambda_n).
\end{align*}

Therefore we can follow the proof of \cite[Th.~8.9]{GigPoz16}, with necessary modifications to allow a time-dependent forcing done in \cite{GigPoz20}, to conclude that $\hat{\overline u}(\cdot; \Theta - \Lambda) $ is a viscosity subsolution of \eqref{eqn:the2}.
\end{proof}

\begin{lem}
\label{le:crystalline-half-lim-conincidence}
$\overline u$ and $\underline u$ from Proposition~\ref{pr:crystalline-half-limits} satisfy $\overline u(\cdot, 0) \leq u_0 \leq \underline u(\cdot, 0)$. In particular, $\overline u = \underline u$ in $\Rn \times [0,\infty)$ and the subsequence $u_n$ converges to the continuous function $u := \overline u$ locally uniformly.
\end{lem}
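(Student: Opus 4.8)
The plan is to establish the two initial inequalities $\overline u(\cdot,0) \leq u_0$ and $u_0 \leq \underline u(\cdot,0)$ by a barrier argument at time zero, and then invoke the comparison principle for the crystalline level set flow together with the known uniqueness of the zero-forcing case to upgrade these to equality everywhere. First I would recall that each $u_n$ has the common initial datum $u_0 = \max\{-\sd(\cdot,\oz),a\}$ from \eqref{freezing-init-data}, so the only issue is the possible loss of continuity in time as $n\to\infty$: a priori $\overline u(\cdot,0)$ could exceed $u_0$ and $\underline u(\cdot,0)$ could fall below it. To rule this out I would use Lemma~\ref{lem:wulff-bound} (or rather the $h$-flow speed bound it yields, as already exploited in Proposition~\ref{prop:cct}): since $\Lambda_n$ is uniformly H\"older with $\Lambda_n(0)=0$, the sets $E_t(1/n)$ satisfy $E_t(1/n)\subset E_0(1/n) + \overline B_{C\sqrt t}(0)$ and $E_t(1/n)^\cc \subset E_0(1/n)^\cc + \overline B_{C\sqrt t}(0)$ uniformly in $n$, with $E_0(1/n)=\oz$. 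Passing to the half-relaxed limits via \eqref{um-Et} gives $\overline u(\cdot,0) \leq u_0$ and $\underline u(\cdot,0)\geq u_0$ pointwise, since $u^h$ is built as a truncated signed distance function and these Hausdorff-type inclusions translate into the desired one-sided pointwise bounds for the limit.

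Once the initial comparison is in place, I would apply the comparison principle for viscosity sub/supersolutions of $V = \psi(\n)(-\kappa_\phi + \Lambda')$ in the sense of \cite{GigPoz20} — extended to continuous $\Lambda$ exactly as in Definition~\ref{def:visl} — which is available because Proposition~\ref{pr:crystalline-half-limits} already identifies $\overline u$ as a subsolution and $\underline u$ as a supersolution with the \emph{same} forcing $\Lambda$. Comparison then yields $\overline u \leq \underline u$ on $\Rn\times[0,\infty)$. The reverse inequality $\underline u \leq \overline u$ is automatic from the definition of the half-relaxed limits. Hence $\overline u = \underline u =: u$, and a standard argument (a monotone function that equals its own upper and lower relaxed limits is continuous, and the convergence $u_n\to u$ is then locally uniform by Dini-type reasoning on the half-relaxed limits) gives local uniform convergence of the subsequence.

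The main obstacle I anticipate is not the initial-layer estimate, which is a routine consequence of the uniform speed bounds already proved, but rather making sure the comparison principle is legitimately applicable: one must check that the generalized notion of viscosity solution from \cite{GigPoz20} with a merely continuous forcing $\Lambda$ indeed satisfies a comparison principle, and that $\overline u, \underline u$ lie in the correct function class ($C_a$-type decay, proper semicontinuity) for it to apply. This is where the care in Proposition~\ref{pr:crystalline-half-limits} — in particular the reduction through the uniformly elliptic approximations $\phi_{n,\delta}$ and the identity $\hat{\overline u}(\cdot;\Theta-\Lambda) = \limsups_n \hu_n(\cdot;\Theta-\Lambda_n)$ from Lemma~\ref{le:uhat-limsup} — pays off, since it certifies that the crystalline comparison theory of \cite{GigPoz20} covers our setting verbatim after the substitution $\Lambda \rightsquigarrow \Theta$.
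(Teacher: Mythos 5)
There is a genuine gap in the first half of your plan, and it concerns precisely the step the paper spends most of its proof on. In Section~\ref{sec:general-mobility} the approximate solutions $u_n$ are, by design, \emph{not} re-initialized as signed distance functions: the identity \eqref{um-Et} only says that the zero super-level sets of $u_n$ coincide with $E_t(1/n)$, while the values of $u_n(\cdot,t)$ for $t>0$ are whatever the level set PDE produces. Consequently, the uniform Hausdorff-type inclusions $E_t(1/n)\subset \oz+\overline B_{C\sqrt t}(0)$ and $E_t(1/n)^\cc\subset \oz^\cc+\overline B_{C\sqrt t}(0)$ (from Lemma~\ref{lem:wulff-bound}, Proposition~\ref{prop:cct}, Corollary~\ref{prop:cet}) control only where $u_n$ changes sign; they do not translate into the pointwise inequalities $\overline u(\cdot,0)\leq u_0\leq \underline u(\cdot,0)$, because $u_n$ is not the truncated signed distance to $E_t(1/n)$ — your justification ``since $u^h$ is built as a truncated signed distance function'' imports the construction \eqref{eqn:uh} of Sections~\ref{sec:lam}--\ref{sec:gexi} into a setting where it was deliberately dropped. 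At best your argument gives sign information at $t=0$, which is not enough to start the comparison argument. One could try to repair it by applying the speed bounds to every super-level set $\{u_n(\cdot,t)>c\}$, but Lemma~\ref{lem:wulff-bound} needs an interior Wulff-shape margin that degenerates for level sets with no fixed inradius, so this does not go through uniformly in $c$ and $n$. The paper instead builds an explicit uniform-in-$n$ supersolution barrier $v_n$ out of the polar $\phi_n^\circ$ (a growing Wulff-type profile plus a $\Theta$-term), compares it with $\hu_n(\cdot;\Theta-\Lambda_n)$ using that $u_0$ is $1$-Lipschitz, and lets $\e\to0$; this function-level barrier at $t=0$ is the missing idea in your proposal.

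The second half of your plan — use Proposition~\ref{pr:crystalline-half-limits} to view $\overline u$ and $\underline u$ as sub/supersolution with the same forcing $\Lam$, invoke a comparison argument to get $\overline u\leq\underline u$, note $\underline u\leq\overline u$ is automatic, and conclude locally uniform convergence from equality of the half-relaxed limits — is consistent with what the paper does (it runs the argument of \cite[Theorem 2.10]{KimKwo18b} adapted to the crystalline setting), and your caution about verifying that the comparison theory of \cite{GigPoz20} applies with merely continuous $\Lam$ is the right concern. But as written, the lemma's key assertion $\overline u(\cdot,0)\leq u_0\leq\underline u(\cdot,0)$ is not proved by your argument.
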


\begin{proof}
We need to construct barriers at $t = 0$ for $u_n$ that can be controlled in $n$ uniformly.
Fix $\e > 0$ and choose $\Theta \in C^1([0, 1])$ with $\Lambda < \Theta < \Lambda + \e$ in $[0,1]$. As $\Lambda_n \to \Lambda$ locally uniformly (along a subsequence), for large enough $n$ we have $\Lambda_n < \Theta < \Lambda_n + \e$. We have that $\hu_n(\cdot; \Theta - \Lambda_n)$ is a subsolution of 
\begin{align}
\label{smooth-lset-pde}
u_t = \psi(-Du) (-\dv D\phi_n(-Du) + \Theta').
\end{align}

Let us recall the positively one-homogeneous level set function $\phi_n^\circ$ of the Wulff shape $W_{\phi_n} = \{\phi_n^\circ \leq 1\}$ defined as $\phi_n^\circ(x) := \max \{x \cdot p: \phi_n(p) \leq 1,\ p \in \Rn\}$. By a standard convex analysis, $\phi_n^\circ \in C^2(\Rn \setminus \{0\})$ and due to the local uniform convergence $\phi_n \to \phi$, we deduce $\phi_n^\circ \to \phi^\circ$ locally uniformly.
Let us define
\begin{align*}
v_n(x,t) := M_{\phi_n}\sqrt{\phi_n^\circ(-x)^2 + \tfrac{2(N-1) M_{\psi}}{m_{\phi_n}} t } + 
\tfrac {M_\psi M_{\phi_n}}{m_{\phi_n}} (\Theta(t) - \Theta(0)).
\end{align*}
We check that it is a classical supersolution on $(\Rn \setminus \{0\}) \times (0, \infty)$. Indeed, we have
\begin{align*}
\partial_t v_n(x,t) &= \tfrac{M_\psi M_{\phi_n}}{m_{\phi_n}} (\tfrac{N-1}{\sqrt{\cdots}} + \Theta'(t)),\\
Dv_n(x,t) &= M_{\phi_n} \tfrac{\phi_n^\circ(-x)}{\sqrt{\cdots}} (-D\phi_n^\circ(-x)).
\end{align*}
A standard convex analysis (see \cite{Bel10}) yields 
\begin{align*}
-\dv D_p\phi_n(-Dv_n(x,t)) = -\dv D_p\phi_n(D\phi_n^\circ(-x)) = \frac{N-1}{\phi_n^\circ(-x)}
\end{align*}
and $\phi_n(D\phi_n^\circ(-x)) = 1$. The latter yields $|D\phi_n^\circ(-x)| \leq \frac1{m_{\phi_n}}$ and, using that  $\tfrac{\phi_n^\circ(-x)}{\sqrt{\cdots}} \leq 1$, also $\psi(-Dv_n) \leq \tfrac{M_\psi M_{\phi_n}}{m_{\phi_n}}$. We conclude that
\begin{align*}
\psi(-Dv_n(x,t)) (-\dv D_p\phi_n(-Dv_n(x,t)) + \Theta'(t)) \leq \tfrac{M_\psi M_{\phi_n}}{m_{\phi_n}} (\tfrac{N-1}{\sqrt{\cdots}} + \Theta'(t)) = \partial_t v_n(x,t).
\end{align*}
Therefore $v_n$ is a classical supersolution of \eqref{smooth-lset-pde} in $(\Rn \setminus {0}) \times (0, \infty)$. Since the first term in $v_n$ is nondecreasing in $t$, we conclude that it is a viscosity supersolution in $\Rn \times (0, \infty)$.

Furthermore, as $\phi_n^\circ(-x) \geq M_{\phi_n}^{-1} |x|$, we observe that $v_n(x, 0) \geq |x|$ and therefore for any fixed $y \in \Rn$,
\begin{align*}
\hu_n(x, 0; \Theta(0) - \Lambda_n(0)) = \hu_0(x; \Theta(0) - \Lambda_n(0)) \leq v_n(x-y, 0) + \hu_0(y; \Theta(0) - \Lambda_n(0)), \qquad x \in \Rn,
\end{align*}
as both $u_0$ and $\hu_0$ are $1$-Lipschitz in space.
By comparison principle,
\begin{align*}
\hu_n (x,t)\leq v_n(x-y,t) + \hu_0(y; \Theta(0) - \Lambda_n(0))\qquad \hbox{ for }  (x,t) \in\Rn \times [0,1],
\end{align*}
where the right-hand side converges locally uniformly to $v(x-y, t) + \hu_0(y; \Theta(0) - \Lambda(0))$. Here $v$ is defined as $v_n$ but with $\phi$ instead of $\phi_n$.
We deduce
\begin{align*}
\limsups_n \hu_n(y, 0) &\leq v(y-y, 0) + \hu_0(y; \Theta(0) - \Lambda(0)) = \hu_0(y; \Theta(0) - \Lambda(0)) \\
&\leq u_0(y) + M_\psi|\Theta(0) - \Lambda(0)|\\
&\leq u_0(y) + M_\psi\e,
\end{align*}
where we used that $u_0$ is $1$-Lipshitz and that $W_\psi \subset \oB_{M_\psi}$ for the second inequality.
Since $\e >0$ was arbitrary, we conclude $\overline u(\cdot, 0) \leq u_0$. The inequality for $\underline u$ can be deduced analogously.

\medskip

Following the argument in \cite[Theorem 2.10]{KimKwo18b}, we can then show that $\overline u \leq \underline u$ in $\Rn \times [0, \infty)$ and hence $u := \overline u = \underline u$. We conclude that $u_n$ converges to $u$ locally uniformly along a subsequence.
\end{proof}

Now we turn to the question of the volume of the zero super-level set of the limit $u$. We will show that if the sets $\{u(\cdot, t) >0\}$ and $\{u(\cdot, t) \geq 0\}$ have the same volume, then it must be $|\Omega_0|$. To this end, we fix $a < 0$ and define the signed distance functions similar to \eqref{eqn:uh},
\begin{align*}
d_n(x,t) := \max(-\sd(x, \{u_n(\cdot, t) > 0\}), a).
\end{align*}
 Following the proof of Lemma~\ref{lem:uhc}, this sequence is locally uniformly bounded and equicontinuous. Thus by selecting a further subsequence if necessary,  there exists a continuous function $d$ such that $d_n \to d$ locally uniformly. 

\begin{lem}
\label{le:crystal-dist-u-relation}
Let $u$ be as in Lemma~\ref{le:crystalline-half-lim-conincidence} and $d$ be as introduced above. We have
\begin{align*}
\{u > 0\} \subset \{d > 0 \} \subset \{d \geq 0\} \subset \{u \geq 0\}.
\end{align*}
\end{lem}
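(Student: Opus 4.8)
The plan is to compare the sequences $u_n$ and $d_n$ directly, exploiting the fact that $d_n$ is the truncated signed distance to the zero super-level set of $u_n$, which is $E_t(1/n)$ by \eqref{um-Et}. The key structural facts I would use are: (i) by \eqref{um-Et}, $\{u_n(\cdot,t)>0\} = \{d_n(\cdot,t)>0\} = E_t(1/n)$ and $\{u_n(\cdot,t)\geq 0\}=\overline{E_t(1/n)}=\{d_n(\cdot,t)\geq 0\}$; (ii) $u_n$ and $d_n$ converge locally uniformly to $u$ and $d$ respectively (along a common subsequence).

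First I would establish $\{u>0\}\subset\{d\geq 0\}$ and $\{d>0\}\subset\{u\geq 0\}$ by a soft argument: if $u(x,t)>0$, then $u_n(x,t)>0$ for large $n$ by local uniform convergence, so $x\in E_t(1/n)$, hence $d_n(x,t)\geq 0$ (indeed $d_n(x,t) = \max(-\sd(x,E_t(1/n)),a) \geq 0$ when $x\in \overline{E_t(1/n)}$, and in fact $>0$ when $x$ is interior, but $\geq 0$ suffices in the limit), so $d(x,t)\geq 0$. The symmetric argument gives $\{d>0\}\subset\{u\geq 0\}$. Combined with the trivial $\{d>0\}\subset\{d\geq 0\}$, this already yields $\{u>0\}\subset\{u\geq 0\}$ and the chain would be nearly complete — but I still need the sharper inclusions $\{u>0\}\subset\{d>0\}$ and $\{d\geq 0\}\subset\{u\geq 0\}$.

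For $\{u>0\}\subset\{d>0\}$: suppose $u(x_0,t)>0$. Then a whole ball $B_\delta(x_0)\subset\{u_n(\cdot,t)>0\}=E_t(1/n)$ for large $n$, using local uniform convergence and $u(\cdot,t)>0$ on a neighborhood of $x_0$. Then $-\sd(x_0,E_t(1/n))\geq \delta$, so $d_n(x_0,t)\geq \min(\delta,|a|)>0$ uniformly in large $n$; passing to the limit, $d(x_0,t)>0$. For $\{d\geq 0\}\subset\{u\geq 0\}$: equivalently, if $u(x_0,t)<0$ then $d(x_0,t)<0$. If $u(x_0,t)<0$, then by local uniform convergence $u_n(\cdot,t)<0$ on a neighborhood $B_\delta(x_0)$ for large $n$, so $B_\delta(x_0)\cap E_t(1/n)=\emptyset$, hence $\sd(x_0,E_t(1/n))\geq\delta$ and $d_n(x_0,t)\leq -\min(\delta,|a|)<0$; in the limit $d(x_0,t)\leq -\min(\delta,|a|)<0$. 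Chaining all four inclusions gives $\{u>0\}\subset\{d>0\}\subset\{d\geq 0\}\subset\{u\geq 0\}$.

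The main obstacle — though it is a mild one — is making sure the truncation at the level $a$ does not interfere: one must keep the radii $\delta$ bounded below (which follows from local uniform convergence on a fixed compact neighborhood), and remember that the truncation only affects points far from the interface and hence does not change the sign of $d_n$ or $d$ near the relevant points. The other point to be careful about is that all the convergences ($u_n\to u$, $d_n\to d$) and the identity \eqref{um-Et} are taken along the same subsequence, which has already been arranged in the paragraphs preceding the lemma. No fattening or geometric cone estimates are needed here; this lemma is purely about reconciling two different level-set representations of the same evolving family of sets.
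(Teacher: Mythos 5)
Your proposal is correct and follows essentially the same route as the paper: the paper also fixes $(x,t)$ with $u(x,t)>0$, uses continuity to get a ball $\overline B_\delta(x)$ on which $u(\cdot,t)>0$, invokes locally uniform convergence to conclude $u_n(\cdot,t)>0$ there (hence $d_n(x,t)\geq\delta$ and $d(x,t)\geq\delta$), and handles $u(x,t)<0$ by the parallel argument, which gives exactly your two nontrivial inclusions. Your preliminary ``soft'' step is redundant but harmless, and your care with the truncation at level $a$ and with working along the common subsequence matches the paper's (implicit) treatment.
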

\begin{proof}
Observe that  \eqref{um-Et} yields $\sign u_n = \sign d_n$. Let us fix $(x,t)$ with $u(x,t) > 0$. By continuity, there exists $\delta > 0$ with $\min_{\overline B_\delta(x)} u(\cdot, t) > 0$ and so by the locally uniform convergence we have $\min_{\overline B_\delta(x)} u_n(\cdot, t) > 0$ for sufficiently large $n$. In particular, $d_n(x,t) \geq \delta$ and hence $d(x,t) \geq \delta$.

A parallel argument verifies that $u(x,t) <0$ implies $d(x,t) < 0$. The claim of the theorem follows.
\end{proof}

\begin{thm}
\label{th:crystalline-existence-no-fat}
Let $\Omega_0$ satisfy~\eqref{eqn:1as} and let $u$ be from Lemma~\ref{le:crystalline-half-lim-conincidence}. Set $\Omega_t := \{u(\cdot, t) > 0\}$.
If there is no fattening of $\{u(\cdot, t)=0\}$ in measure, that is, if $|\{u(\cdot, t) = 0\}| = 0$ for all $t \geq 0$, then $(\ott, \Lam)$ is a viscosity solution of \eqref{model}.
\end{thm}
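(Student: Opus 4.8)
The plan is to separate the two requirements of Definition~\ref{def:model}: the geometric evolution law and the volume constraint. The evolution law is already in hand. By Proposition~\ref{pr:crystalline-half-limits} together with Lemma~\ref{le:crystalline-half-lim-conincidence} the limit $u=\overline u=\underline u$ is simultaneously a viscosity subsolution and a viscosity supersolution of $V=\psi(\n)(-\kp+\Lam')$ (in the crystalline, time-dependent sense of \cite{GigPoz20}), and $u(\cdot,0)=u_0$ with $\{u_0>0\}=\oz$. Hence $\ott$ with $\ot=\{u(\cdot,t)>0\}$ is a viscosity solution of the flow $V=\psi(\n)(-\kp+\Lam')$ issuing from $\oz$, and it only remains to prove $|\ot|=|\oz|$ for every $t\ge0$.

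For the volume I would first note that the constants produced by Theorems~\ref{thm:ve} and~\ref{thm:bal}, applied to the smooth anisotropies $\phi_n$, are uniform in $n$: they depend on $\phi_n$ only through $M_{\phi_n}$, $m_{\phi_n}$ and $m_{\phi_n/\psi}$, which converge to the finite positive numbers $M_\phi$, $m_\phi$, $m_{\phi/\psi}$ since $\phi_n\to\phi$ locally uniformly and $\phi$ is positive away from the origin, while $\P$ and $|\oz|$ are fixed. Thus, for $n$ large, Theorem~\ref{thm:ve} gives $\big||E_t(1/n)|-|\oz|\big|\le C n^{-1/2}$ with $C$ independent of $n$ and $t$, so that $|E_t(1/n)|\to|\oz|$ for each fixed $t$, while Theorem~\ref{thm:bal} confines all the sets $E_t(1/n)$ to a fixed ball $B_{R_0}(0)$. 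Moreover, by~\eqref{um-Et} and the fact that each $E_t(1/n)$ is a Lipschitz domain (Corollary~\ref{cor:lu} applied with $\phi_n$), one has $|E_t(1/n)|=|\{d_n(\cdot,t)>0\}|=|\{d_n(\cdot,t)\ge0\}|$.

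Next I would pass to the limit in measure using the local uniform convergence $d_n\to d$. If $d(x,t)>0$ then $d_n(x,t)>0$ for all large $n$, and if $d(x,t)<0$ then $d_n(x,t)<0$ for all large $n$; hence, pointwise,
\begin{align*}
\chi_{\{d(\cdot,t)>0\}}\le\liminf_{n\to\infty}\chi_{\{d_n(\cdot,t)>0\}},\qquad \limsup_{n\to\infty}\chi_{\{d_n(\cdot,t)\ge0\}}\le\chi_{\{d(\cdot,t)\ge0\}}.
\end{align*}
Since all the sets involved lie in $B_{R_0}(0)$, Fatou's lemma and its reverse form give
\begin{align*}
|\{d(\cdot,t)>0\}|\le\liminf_{n\to\infty}|E_t(1/n)|=|\oz|=\limsup_{n\to\infty}|E_t(1/n)|\le|\{d(\cdot,t)\ge0\}|.
\end{align*}
Combining this with the inclusions $\{u>0\}\subset\{d>0\}\subset\{d\ge0\}\subset\{u\ge0\}$ from Lemma~\ref{le:crystal-dist-u-relation} yields, for every $t\ge0$,
\begin{align*}
|\ot|=|\{u(\cdot,t)>0\}|\le|\{d(\cdot,t)>0\}|\le|\oz|\le|\{d(\cdot,t)\ge0\}|\le|\{u(\cdot,t)\ge0\}|.
\end{align*}
Under the no-fattening hypothesis $|\{u(\cdot,t)=0\}|=0$ we have $|\{u(\cdot,t)\ge0\}|=|\{u(\cdot,t)>0\}|+|\{u(\cdot,t)=0\}|=|\ot|$, so all the inequalities above collapse to equalities; in particular $|\ot|=|\oz|$. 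Together with the first paragraph this shows that $(\ott,\Lam)$ is a viscosity solution of \eqref{model} in the sense of Definition~\ref{def:model}.

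The bulk of the work, and the only genuine obstacle, is the uniformity bookkeeping of the second paragraph: one must verify that the $O(\sqrt h)$ volume error and the uniform confinement established in Section~\ref{sec:lam} survive the simultaneous limit $h=1/n\to0$ along the approximating anisotropies $\phi_n$, and one must use \eqref{um-Et} (itself a consequence of the absence of fattening in the $h$-step, Corollary~\ref{cor:lu}) to replace the closed set $\{d_n\ge0\}$ by the open set $\{d_n>0\}$ without changing the measure. The only new ingredient beyond the smooth case is the standing hypothesis $|\{u(\cdot,t)=0\}|=0$, which is precisely what bridges the otherwise unavoidable gap between the open and closed zero super-level sets of the limit $u$.
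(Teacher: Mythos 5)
Your proposal is correct, and its skeleton coincides with the paper's: the evolution law is exactly the content of Proposition~\ref{pr:crystalline-half-limits} and Lemma~\ref{le:crystalline-half-lim-conincidence}, and the volume identity is obtained by pushing the scheme's volume estimate to the limit and closing the gap with Lemma~\ref{le:crystal-dist-u-relation} and the no-fattening hypothesis. The one place where you deviate is the treatment of the limit distance function $d$: the paper simply re-runs the proof of Theorem~\ref{THM:1} with $d_n$ in place of $u^h$, which yields both the local Lipschitz-graph structure of $\{d(\cdot,t)=0\}$ and the exact identity $|\{d(\cdot,t)>0\}|=|\Omega_0|$, and only then invokes Lemma~\ref{le:crystal-dist-u-relation}; you instead avoid any regularity statement for the limit interface of $d$ and prove only the two-sided bound $|\{d(\cdot,t)>0\}|\le|\Omega_0|\le|\{d(\cdot,t)\ge 0\}|$ by Fatou and reverse Fatou (the latter legitimately justified by the uniform confinement of the sets $E_t(1/n)$ coming from Theorem~\ref{thm:bal} together with the volume estimate), letting the hypothesis $|\{u(\cdot,t)=0\}|=0$ collapse the whole chain. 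This buys a slightly more elementary volume step---no Lipschitz graph property of $\{d(\cdot,t)=0\}$ is needed there---at the price of doing explicitly the uniformity bookkeeping in $n$ (the constants of Theorem~\ref{thm:ve} and Corollary~\ref{cor:lu} through $M_{\phi_n}$, $m_{\phi_n/\psi}$, $m_\psi$), which the paper leaves implicit in its reference to Theorem~\ref{THM:1}; both routes rest on the same ingredients, namely \eqref{um-Et}, Corollary~\ref{cor:lu}, Theorem~\ref{thm:ve} and Lemma~\ref{le:crystal-dist-u-relation}, so the difference is one of economy rather than substance.
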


\begin{proof}
We can follow the proof of Theorem~\ref{THM:1} with $d_n$ in place of $u^h$ in \eqref{eqn:uh} to show that $\{d(\cdot, t) = 0\}$ can be locally expressed as a graph of a Lipschitz function and $|\{d(\cdot, t) > 0\}| = |\Omega_0|$. 

By Lemma~\ref{le:crystal-dist-u-relation}, for any $t \geq 0$
\begin{align*}
|\{u(\cdot, t) > 0\}| \leq |\{d(\cdot, t) > 0\}| \leq |\{u(\cdot, t) >0\}| +|\{u(\cdot, t) = 0\}| = |\{u(\cdot, t) > 0\}|.
\end{align*}
We conclude that $|\{u(\cdot, t) > 0\}| = |\Omega_0|$. Now the theorem follows from the previous characterization of $u$ as a viscosity solution of $V= \psi(\vec{n})(- \kappa_{\phi}+ \Lambda')$.
\end{proof}

\subsection{With \texorpdfstring{$\phi$}{phi}-regular mobility}
\label{se:phi-regular-mobility}

In this section we assume $\psi$ to be {\it $\phi$ regular}, namely that there exists $\e_0>0$ and a convex function $\eta$ such that 
\begin{equation}\label{uniform_phi_reg}
\psi(\nu) = \eta(\nu) + \e_0\phi(\nu).
\end{equation}  \eqref{uniform_phi_reg} is equivalent to ensuring that, for a closed set $E$, positive level sets of distance function $d:= \dist^{\psi^o}(x,E)$ satisfy the interior Wulff-shape property. When $\phi$ is differentiable, this property yields the curvature bound
\begin{equation}\label{bound_00}
(\dv z)_+ \leq  (N-1)/(\e_0d), \hbox{ where } z := D\phi(D d)
\end{equation}
(see \cite{CMNP19, chambolle_AP} for further discussions on $\phi$-regularity). Based on this observation, a notion of distributional solutions for the set evolution of $V = \psi(\n)(-\kappa_{\phi} + \lambda)$ was introduced in \cite{chambolle_AP} as well as its uniqueness, when $ \lambda$ is in $L^{\infty}_{loc}([0,\infty))$.

\medskip

We are not able to obtain such regularity for the volume-preserving forcing term $\Lambda'$ for our limit flow, which only exists in the distributional sense. (In general it appears difficult to obtain strong regularity properties for $\Lambda$ in non-convex setting: see \cite[Example A.2]{KimKwo18b}). Instead, here we will show that our limit satisfies a natural extension of the distributional solutions in \cite{CMNP19}, with necessary modifications to address the weaker regularity of our forcing term. We expect this notion to deliver uniqueness for crystalline flows of this form (that is with fixed forcing $\Lambda'$ where its anti-derivative $\Lambda$ is merely in $C([0,T])$),  however we do not pursue this issue here.

\medskip

For $\phi_n$ as given earlier in the section, we define $\psi_n(\nu):= \eta(\nu)+\e_0\phi_n(\nu)$. Let us denote its corresponding solution $((\Omega_t^n)_{t\geq0}, \Lambda_n)$ and define
$$
d_n(x,t):= \dist^{\psi_n^o} (x,\Omega_t^n) \hbox{ and } \tilde{d}_n(x,t):=\dist^{\psi_n^o} (x,(\Omega_t^n)^\cc),\,\,\, \hbox{ where } \dist^\eta(x,E):= \inf_{y\in E} \eta(x-y).
$$
Note that due to the geometric properties we have on $(\Omega_t^n)_{t\geq0}$ and the uniform H\"{o}lder continuity of $\Lam_n$ (see Theorem~\ref{THM:1}), along a subsequence, $\Omega^n_t$ converges to $\Omega_t$ locally uniformly in Hausdorff distance, and $d_n, \tilde{d}_n, \Lam_n$ converge to $d, \tilde{d}, \Lam$  locally uniformly in space and time. Below we will show that the limiting flow $(\ott, \Lam)$ satisfies the properties of distributional solutions for the crystalline flow.

\begin{thm}\label{sec7.1_main}
Let $(\psi_n, \phi_n)$ be as given above, and  let $(\ott, \Lam)$ be a subsequential limit of $((\Omega_t^n)_{t\geq0}, \Lambda_n)$ as discussed above. Then the following holds for $E_t:=\overline{\Omega}_t$:
\begin{itemize}
\item[(a)] Let $d(x,t):= \dist^{\psi^o}(x,E_t)$. Then there exists $z\in L^{\infty}(\Rn \times (0,T))$ such that 
$z\in \partial\phi(D d)$ a.e., $\dv z$ is a Radon measure in $\Sigma:=\bigcup_{0<t<T}(\Rn  \setminus E_t) \times\{t\}$, and
\begin{equation}\label{upper_bound}
(\dv z)^+ \in L^{\infty}(\{d(x,t) \geq \delta\}) \hbox{ for every } \delta \in (0,1).
\end{equation} 
Moreover, for any smooth $\phi$ supported in $\Sigma$ we have 
\begin{equation}\label{level_set_pde}
\int \int d(-\phi_t)  dx dt \geq \int\int (z\cdot D \phi - \Lambda \phi_t) dxdt.
\end{equation} 
\item[(b)] The statements of $(a)$ hold for $E_t$ replaced by $\tilde{E}_t:= \Rn \setminus \Omega_t$ and $\Lambda$ replaced by $-\Lambda$.\\
\item[(c)] $|\Omega_t| = |\Omega_0|$ for all  $t>0$.
\end{itemize}
\end{thm}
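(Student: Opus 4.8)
The plan is to obtain $(\ott,\Lam)$ as a limit of the smooth-anisotropy flows $((\Omega_t^n)_{t\geq0},\Lambda_n)$ from the start of the section and to pass the distributional formulation of \cite{CMNP19,chambolle_AP} to the limit. I would first record that for each fixed $n$, since $\phi_n\in C^2(\Rn\setminus\{0\})$, $\psi_n=\eta+\e_0\phi_n$ is $\phi_n$-regular, and the discrete forcing $\lambda_n=\Lambda_n'$ built as in \eqref{eqn:et}--\eqref{eqn:lam} is piecewise constant in time (hence in $L^\infty_{\mathrm{loc}}$), the flow $\Omega^n_t$ is the distributional solution of $V=\psi_n(\n)(-\kappa_{\phi_n}+\lambda_n)$: on each time step this is the unique smooth flow, and a gluing argument parallel to Lemma~\ref{lem:glue} shows the glued $h$-flow still satisfies the distributional formulation globally. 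By definition this means there are Cahn--Hoffman fields $z_n,\tilde z_n\in L^\infty$ with $z_n=D\phi_n(Dd_n)$ a.e.\ on $\Sigma_n:=\{d_n>0\}$ and $\tilde z_n=D\phi_n(D\tilde d_n)$ a.e.\ on $\{\tilde d_n>0\}$, with $\dv z_n$, $\dv\tilde z_n$ Radon measures there obeying the one-sided bounds $(\dv z_n)^+\le (N-1)/(\e_0 d_n)$, $(\dv\tilde z_n)^+\le (N-1)/(\e_0\tilde d_n)$ from \eqref{bound_00}, and with the inequality \eqref{level_set_pde} holding for $(d_n,z_n,\Lambda_n)$ and (with $\Lambda_n$ replaced by $-\Lambda_n$) for $(\tilde d_n,\tilde z_n,-\Lambda_n)$. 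All geometric constants in these statements are uniform in $n$ because $\phi_n\to\phi$, $\psi_n\to\psi$ uniformly on $\S^{N-1}$.

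Next I would carry out the limit for $d_n$; the argument for $\tilde d_n$ is identical and produces part (b). Passing to a further subsequence, $z_n\rightharpoonup z$ weakly-$*$ in $L^\infty$. Since $\phi_n$ is positively one-homogeneous, $z_n\in W_{\phi_n}$ and $z_n\cdot Dd_n=\phi_n(Dd_n)$ a.e.; as $W_{\phi_n}\to W_\phi$ in Hausdorff distance and the convex constraint is weak-$*$ closed, $z\in W_\phi$ a.e.\ on $\Sigma$. For fixed $\delta\in(0,1)$, on $\{d\ge\delta\}$ one has $d_n\ge\delta/2$ for $n$ large, so $(\dv z_n)^+\le 2(N-1)/(\e_0\delta)$ there, while the divergence theorem together with $|z_n|\le\max_{\S^{N-1}}\phi_n$ controls $(\dv z_n)^-$ on compact subsets; hence $\dv z_n$ has locally bounded total variation on $\{d\ge\delta\}$ and converges weakly-$*$, as a measure, to $\dv z$, which inherits the measure property and the bound $(\dv z)^+\in L^\infty(\{d\ge\delta\})$. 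Letting $\delta\downarrow0$ gives the statement on $\Sigma$. The inequality \eqref{level_set_pde} for $(d,z,\Lambda)$ then follows by passing to the limit in the same inequality for $(d_n,z_n,\Lambda_n)$: a smooth test function $\varphi$ with compact support $K\subset\Sigma$ satisfies $K\subset\Sigma_n$ for $n$ large (since $\overline{\Omega^n_t}\to E_t$ locally uniformly in Hausdorff distance), and every term is affine in $(d_n,z_n,\Lambda_n)$ against $\varphi$, so local uniform convergence $d_n\to d$, $\Lambda_n\to\Lambda$ together with $z_n\rightharpoonup z$ suffice.

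The delicate step — and I expect it to be the main obstacle — is the a.e.\ identification $z\in\partial\phi(Dd)$, which by one-homogeneity reduces, given $z\in W_\phi$, to showing $z\cdot Dd=\phi(Dd)$ a.e. Here I would exploit $\phi_n$-regularity \eqref{uniform_phi_reg} beyond the bound \eqref{bound_00}: the uniform interior Wulff-shape property of the superlevel sets $\{d_n>s\}$ (with Wulff radius comparable to $s$), together with the fact that the distance function has no second derivative in the normal direction, makes the $d_n$ equi-semiconcave on each region $\{d\ge\delta\}$ with an $n$-independent constant; by the classical gradient-convergence property for equi-semiconcave functions converging uniformly, $Dd_n\to Dd$ a.e.\ on $\{d\ge\delta\}$, hence (being uniformly bounded) in $L^1_{\mathrm{loc}}$. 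Then $z_n\cdot Dd_n=\phi_n(Dd_n)\to\phi(Dd)$ strongly in $L^1_{\mathrm{loc}}$, while $z_n\rightharpoonup z$ weakly-$*$ and $Dd_n\to Dd$ strongly give $z_n\cdot Dd_n\rightharpoonup z\cdot Dd$ weakly in $L^1_{\mathrm{loc}}$; therefore $z\cdot Dd=\phi(Dd)$ a.e.\ on $\{d\ge\delta\}$, and letting $\delta\downarrow0$ yields $z\in\partial\phi(Dd)$ a.e.\ on $\Sigma$. Establishing the equi-semiconcavity of the $d_n$ uniformly in $n$ directly from the $\phi_n$-regularity structure is the only genuinely analytic input, and where most of the care is needed.

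Finally, part (b) is the other half of the statement that the limit is a distributional solution: running the argument of the previous two paragraphs with $(d_n,z_n,\Lambda_n)$ replaced by $(\tilde d_n,\tilde z_n,-\Lambda_n)$ gives the conclusion of (a) with $E_t$, $d$, $\Lambda$ replaced by $\tilde E_t=\Rn\setminus\Omega_t$, $\tilde d$, $-\Lambda$. For part (c), since each $\phi_n\in C^2(\Rn\setminus\{0\})$, Theorem~\ref{THM:1} gives $|\Omega^n_t|=|\Omega_0|$ exactly for all $n$ and $t$; the sets $\Omega^n_t$ are uniformly Lipschitz domains (uniform interior/exterior cone property, with constants depending only on $\P$ and the extremal values of $\phi,\psi$) converging to $\Omega_t$ in Hausdorff distance, so their Lebesgue measures converge and the limit $\Omega_t$ is itself a Lipschitz domain with $|\partial\Omega_t|=0$; hence $|\Omega_t|=\lim_n|\Omega^n_t|=|\Omega_0|$ for every $t\ge0$.
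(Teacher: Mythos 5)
Your opening step does not apply to the objects the theorem is actually about. In Section~\ref{se:phi-regular-mobility} the pair $((\Omega^n_t)_{t\geq0},\Lambda_n)$ is the solution produced by Theorem~\ref{THM:1} for the smooth pair $(\phi_n,\psi_n)$: $\Lambda_n$ is only known to lie in $C^{1/2}([0,\infty))$ and $\Lambda_n'$ exists only as a distribution. It is \emph{not} the piecewise-constant discrete forcing of \eqref{eqn:et}--\eqref{eqn:lam}, so your assertion that each $\Omega^n_t$ is a distributional solution with $L^\infty_{\mathrm{loc}}$ forcing (step-by-step uniqueness plus a gluing as in Lemma~\ref{lem:glue}) cannot be used to invoke the formulation of \cite{CMNP19,chambolle_AP} directly for $(\Omega^n_t,\Lambda_n)$. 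This is exactly the difficulty the proof has to overcome: the paper chooses $C^1$ minorants $\theta_n\leq\Lambda_n$ with $\theta_n\to\Lambda$ and passes to the Wulff inf-convolution $\inf_{x+(\Lambda_n-\theta_n)(t)W_{\psi_n}}\chi_{\Omega^n_t}$, which by Definition~\ref{def:visl} is a genuine viscosity supersolution with $C^1$ forcing $\theta_n'$; only then can Lemma~2.6 of \cite{chambolle_AP} be applied, and the perturbation vanishes in the limit since $\Lambda_n-\theta_n\to0$. Your route could conceivably be repaired by a diagonal argument through the discrete flows $E_t(h)$ (whose forcing really is piecewise constant), but then you must show that the diagonal limit is the \emph{same} subsequential limit $(\ott,\Lam)$ fixed in the statement (nontrivial, since uniqueness is open), and also justify the consistency of the discrete viscosity flow with the distributional formulation and its gluing across time steps; none of this is set up in your proposal.

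The second gap is your identification step, which you yourself flag as the delicate one. Equi-semiconcavity of $d_n$ on $\{d\geq\delta\}$ fails in this generality: $\phi$-regularity allows crystalline $\psi$ (take $\eta$ a multiple of $\phi$), and the sets $E_t$ are non-convex Lipschitz domains, so the limiting distance $d=\dist^{\psi^o}(\cdot,E_t)$ can itself carry convex kinks arbitrarily far from $E_t$ --- for instance the $\ell^1$-distance to the wedge $\{y_2\leq-|y_1|\}$ equals $t+|s|$ at the point $(s,t)$, a convex kink along the entire ray $\{s=0,\ t>0\}$. Hence the $d_n$ cannot be equi-semiconcave on $\{d\geq\delta\}$ (otherwise $d$ would be semiconcave there), the interior Wulff-shape property does not yield such a bound, and the a.e.\ convergence $Dd_n\to Dd$ you need for $z\cdot Dd=\phi(Dd)$ does not follow. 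The paper instead obtains both the inclusion $z\in\partial\phi(Dd)$ and the bound \eqref{upper_bound} from the uniform curvature estimate $(\dv z_n)^+\leq (N-1)/(\e_0 d_n)$ of \eqref{bound_00}, arguing as in \cite[Theorem~2.8]{chambolle_AP}; some argument of that type is needed in place of your semiconcavity claim. Your treatment of the linear passage to the limit in \eqref{level_set_pde}, of part (b) by symmetry, and of part (c) via $|\Omega^n_t|=|\Omega_0|$, Hausdorff convergence and the uniform Lipschitz property is in line with the paper.
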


\begin{remark}
In Lemma 2.6 of \cite{chambolle_AP}, there is an additional term $-Md \phi$ in the right-hand side integrand of \eqref{level_set_pde}. This term is present due to the spatial dependence of the forcing $\Lambda$ in their case,  and thus does not appear for our problem.
\end{remark}
 
The following is an immediate consequence of properties $(a)$--$(b)$, which constitutes of the definition in \cite{CMP17} for the flow with the fixed forcing $V = \psi(\n)(-\kappa_\phi +g)$ with $g = \Lambda'$. 

\begin{cor}
When $\Lambda$ is a Lipschitz continuous function of time, $\ott$ is a solution of the flow $V = \psi(\n)(-\kappa_\phi + \Lambda')$ in the sense of \cite{CMP17}.
\end{cor}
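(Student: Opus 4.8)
The plan is to reduce the statement to a single integration by parts in the time variable. Since $\Lambda$ is Lipschitz on $[0,\infty)$, it is absolutely continuous and its distributional derivative $g:=\Lambda'$ is a genuine function in $L^\infty_{\mathrm{loc}}([0,\infty))$; consequently a notion of (distributional) solution of $V=\psi(\n)(-\kappa_\phi+g)$ in the sense of \cite{CMP17} is available. As noted just before the statement, that definition consists exactly of the items recorded in parts $(a)$ and $(b)$ of Theorem~\ref{sec7.1_main}, except that the forcing is required to appear in the level-set inequality in the form $\int\!\!\int g\varphi\,dx\,dt$ rather than as the distributional pairing $-\int\!\!\int\Lambda\varphi_t\,dx\,dt$. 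So the whole content of the corollary is to check that, when $\Lambda$ is Lipschitz, these two forms of the inequality coincide.

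First I would invoke parts $(a)$--$(b)$ of Theorem~\ref{sec7.1_main} directly: they furnish the distance functions $d(\cdot,t)=\dist^{\psi^o}(\cdot,E_t)$ and $\tilde d(\cdot,t)=\dist^{\psi^o}(\cdot,\tilde E_t)$ (with $E_t=\overline{\Omega}_t$, $\tilde E_t=\Rn\setminus\Omega_t$), the Cahn--Hoffman fields $z\in\partial\phi(Dd)$ and $\tilde z\in\partial\phi(D\tilde d)$ with $\dv z$, $\dv\tilde z$ Radon measures on the respective complement tubes, and the curvature bound \eqref{upper_bound}. None of these objects or estimates involves the forcing, so they match the requirements of \cite{CMP17} verbatim. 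For the PDE, fix $T>0$ and a smooth $\varphi$ compactly supported in $\Sigma$. For each fixed $x$ the function $t\mapsto\varphi(x,t)$ is smooth with compact support in $(0,T)$, so absolute continuity of $\Lambda$ gives $\int_0^T\Lambda(t)\,\varphi_t(x,t)\,dt=-\int_0^T\Lambda'(t)\,\varphi(x,t)\,dt$ with no boundary contribution; integrating in $x$ and using Fubini yields $-\int\!\!\int\Lambda\varphi_t\,dx\,dt=\int\!\!\int g\varphi\,dx\,dt$. Substituting into \eqref{level_set_pde} turns it into
\[
\int\!\!\int d\,(-\varphi_t)\,dx\,dt\ \geq\ \int\!\!\int\bigl(z\cdot D\varphi+g\varphi\bigr)\,dx\,dt,
\]
which is precisely the distributional level-set inequality for $V=\psi(\n)(-\kappa_\phi+g)$ in the sense of \cite{CMP17}; the same computation applied to $\tilde d$ and $-\Lambda$ from part $(b)$ gives the companion inequality for the complement.

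Collecting these, $(E_t)_{t\geq0}$ together with $z$ and $\tilde z$ satisfies every requirement of the definition in \cite{CMP17} with forcing $g=\Lambda'$, so $\ott$ is a solution of $V=\psi(\n)(-\kappa_\phi+\Lambda')$ in that sense. I do not expect a genuine obstacle here: the only delicate point is the legitimacy of the time integration by parts, which rests on the absolute continuity of $\Lambda$ and on the test functions being compactly supported in the open set $\Sigma$ (so that no contributions arise at $t=0$, at $t=T$, or along $\partial\Sigma$). This is exactly why the statement is limited to Lipschitz $\Lambda$, and it is also the reason the extra spatial term appearing in \cite[Lemma~2.6]{chambolle_AP} is absent in our setting, as already observed in the remark above.
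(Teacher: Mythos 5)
Your argument is correct and follows the paper's own route: the paper states the corollary as an immediate consequence of parts $(a)$--$(b)$ of Theorem~\ref{sec7.1_main}, which constitute the definition of \cite{CMP17} once the forcing $g=\Lambda'\in L^\infty_{loc}$ is identified. Your only addition is to spell out the time integration by parts converting $-\int\!\!\int\Lambda\varphi_t$ into $\int\!\!\int\Lambda'\varphi$, which is exactly the (implicit) content of the paper's one-line justification.
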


 Our proof largely follows that of Theorem 2.8 in \cite{chambolle_AP}, with necessary modifications made for the low regularity of $\Lambda$. We will only show $(a)$ since $(b)$ can be shown via a parallel proof. $(c)$  is a direct consequence of the following convergence: 
 $$
\sup_{x\in \Omega^n_t}  d(x, \Omega_t) \to 0 \hbox{ as } n\to \infty, \hbox{ for all } t>0.
 $$

\medskip

\textbf{Proof of Theorem~\ref{sec7.1_main}.}
Let us consider a sequence of $C^1$ functions $\theta_n$ that sits below $\Lambda_n$ and locally uniformly converges to $\Lambda$.  For instance we can choose $\tilde{\theta}_n:= \Lambda*  \eta_{1/n}$ with a standard mollifier $\eta$ and shift it down by $e_n:=\|\tilde{\theta}_n - \Lambda_n\|_{L^\infty}$ to define $\theta_n$ (Note that $e_n$ goes to zero as $n\to\infty$ due to the locally uniform convergence of $\Lambda$ to $\Lambda_n$).  By Definition~\ref{def:visl}
 $$
 \tilde{u}_n(x,t):= \inf_{x+(\Lambda_n-\theta_n)(t)W_\psi}u(\cdot,t) = \chi_{\tilde{\Omega}^n_t}(x), \quad \hbox{ where } u(\cdot,t):= \chi_{\Omega_t},
 $$
 is a viscosity supersolution of $V = \psi(\n)(-\kappa_{\phi} + \theta_n')$. 
  We accordingly define 
  $$\tilde{d}_n:= \dist^{\psi^o}(x, E^n_t), \hbox{ where }E^n_t:= \overline{\tilde{\Omega}^n_t}.
  $$

\medskip

 From Lemma 2.6 of \cite{chambolle_AP}, $d_n$ satisfies \eqref{level_set_pde} with $z$ replaced by $z_n = D\phi_n(D d_n)$ and $\Lambda$ replaced by $\theta_n$. From Theorem~\ref{THM:1} which provides uniform spatial geometric properties on $\Omega^n_t$ and uniform H\"{o}lder continuity of $d_n$ over time, we know that $E^n_t$ converges to $E_t$ in the Kuratowski sense,  and thus $d_n$ locally uniformly converges to $d$ in $\Rn\times [0,\infty)$.  Moreover, $z_n$'s are uniformly bounded in $L^{\infty}_{loc} (\Rn\times [0,\infty))$ and thus have a subsequential weak-$* $ limit $z$. Using this and the locally uniform convergence of $\tilde{d}_n, \theta_n$ to $d, \Lambda$, we can confirm that \eqref{level_set_pde} holds for $d$ and $z$.

\medskip

Now it remains to confirm that $z\in \partial\phi(D d)$ a.e. with \eqref{upper_bound}.
To this end, observe that due to the uniform $\phi_n$ regularity of $\psi_n$ we have $\dv z_n \leq (N-1)/(\e_0d_n)$ for some $\e_0>0$ as pointed out in \eqref{bound_00}.  Hence arguing as in Theorem 2.8 of \cite{chambolle_AP} we can conclude.

\hfill$\Box$

\appendix

\section{Geometric properties}


Here we show several geometric properties used in the paper.  First we show that $\P_\phi$ given below is a root system:
\begin{align*}
\P_\phi := \{p \in \S^{N-1} : \phi = \phi \circ \Psi_p \hbox{ and } \psi = \psi \circ \Psi_p \}. 
\end{align*} 
By definition of $\P_\phi$ and the fact that $\Psi_p = \Psi_{-p}$, it can be shown that
\begin{align}
\label{eqn:sym2}
p \in \P_\phi, \quad \hbox{ if and only if } -p \in \P_\phi
\end{align}

Recall the reflection with respect to a hyperplane containing the origin
\begin{align*}
\Psi_{p} = \Psi_{\Pi_p(0)} = I - 2 p \otimes p
\end{align*}
is a symmetric unitary operator and an involution. Furthermore, compositions of three (or any odd number of) reflections are also reflections. From this observation, we show that if $p$ and $q$ are directions of reflection symmetry, then $\pm \Psi_{q}(p)$ is also a direction of reflection symmetry.

\begin{lem}
\label{lem:new}
If $p, q \in \P_\phi$, then $\pm \Psi_{q}(p) \in \P_\phi$. In particular, $\P_\phi$ is a root system.
\end{lem}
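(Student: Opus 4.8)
The plan is to reduce everything to the conjugation identity
$\Psi_q\Psi_p\Psi_q = \Psi_{\Psi_q(p)}$ for reflections through hyperplanes containing the origin. First I would record that for a unit vector $p$ one has $\Psi_p = I - 2\,p\otimes p$, and that for any orthogonal matrix $R$ one has $R\,(p\otimes p)\,R^{\mathsf T} = (Rp)\otimes(Rp)$. Taking $R=\Psi_q$, which is symmetric and an involution, then gives
\[
\Psi_q\Psi_p\Psi_q = I - 2\,(\Psi_q p)\otimes(\Psi_q p) = \Psi_{\Psi_q(p)} .
\]
This is the only computation required and I expect it to be entirely routine.

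Next I would use the defining relations $\phi = \phi\circ\Psi_q$, $\phi = \phi\circ\Psi_p$ (and likewise for $\psi$) together with the identity above to compute, for $p,q\in\P_\phi$,
\begin{align*}
\phi\circ\Psi_{\Psi_q(p)} &= \phi\circ(\Psi_q\Psi_p\Psi_q) = (\phi\circ\Psi_q)\circ\Psi_p\circ\Psi_q \\
&= \phi\circ\Psi_p\circ\Psi_q = (\phi\circ\Psi_p)\circ\Psi_q = \phi\circ\Psi_q = \phi,
\end{align*}
and the identical chain of equalities for $\psi$. Since $\Psi_q$ is unitary, $\Psi_q(p)\in\S^{N-1}$, so $\Psi_q(p)\in\P_\phi$; combining this with \eqref{eqn:sym2} yields $\pm\Psi_q(p)\in\P_\phi$, which is the first assertion of the lemma.

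Finally, to conclude that $\P_\phi$ is a root system in the sense of \eqref{eqn:sym}, I would verify its two defining conditions. For the first, $\P_\phi\subset\S^{N-1}$ forces $\P_\phi\cap(\R p)\subset\{p,-p\}$, while $-p\in\P_\phi$ by \eqref{eqn:sym2}, so $\P_\phi\cap(\R p)=\{p,-p\}$. For the second, the assertion just proved (with the roles of $p$ and $q$ interchanged) shows $\Psi_p(\P_\phi)\subset\P_\phi$ for every $p\in\P_\phi$; applying the involution $\Psi_p$ once more gives the reverse inclusion, hence $\Psi_p(\P_\phi)=\P_\phi$. I do not foresee a genuine obstacle: the only point requiring care is getting the conjugation identity and the order of composition right, after which both claims follow immediately.
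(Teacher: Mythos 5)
Your proposal is correct and follows essentially the same route as the paper: both hinge on the conjugation identity $\Psi_q\Psi_p\Psi_q = \Psi_{\Psi_q(p)}$ (the paper verifies it by acting on a vector $x$, you via $R(p\otimes p)R^{\mathsf T}=(Rp)\otimes(Rp)$, which is the same computation) combined with \eqref{eqn:sym2}. You simply spell out the invariance $\phi\circ\Psi_{\Psi_q(p)}=\phi$ and the two root-system axioms of \eqref{eqn:sym} more explicitly than the paper does, which is fine.
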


\begin{proof}
As $\Psi_{q}$ is an involution and symmetric, we have
\begin{align*}
\Psi_{q} \Psi_{p} \Psi_{q} x &= \Psi_{q} \left( \Psi_{q} x - 2 ( \Psi_{q} x \cdot p) p \right) = x - 2 (x \cdot \Psi_{q} p) \Psi_{q} p = \Psi_{\Psi_{q}p} x.
\end{align*}
From $| \Psi_{q}(p) | = | p | = 1$ and \eqref{eqn:sym2}, we conclude that $\pm \Psi_{p}(q) \in \P_\phi$.
\end{proof}


\begin{lem}
\label{lem:per}
The perimeter of a set $E$ satisfying \eqref{eqn:rp} and $\B_r \subset E \subset B_R(0)$ is bounded by $C = C(\phi, r, R)>0$.
\end{lem}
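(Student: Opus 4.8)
The plan is to derive the perimeter bound from the uniform cone property of $\partial E$ supplied by Theorem~\ref{thm:lpb}. Since $\B_r\subset E$ (and $E$ is open, as elsewhere in the paper), $\partial E$ is disjoint from $\B_r$, so $\partial E$ lies in the compact region $K:=\overline{B_R(0)}\setminus\B_r$. As $E$ satisfies \eqref{eqn:rp} and contains $\B_r$, Theorem~\ref{thm:lpb} applies with $\Om=E$ and provides, for every $x_0\in\B_r^\cc$, a basis $A=A(x_0)\subset\P$ (depending only on $x_0$) such that $x+\co_r(A)\subset E^\cc$ for all $x\in E^\cc\cap B_r(x_0)$ and $y-\co_r(A)\subset E$ for all $y\in E\cap B_r(x_0)$.

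First I would convert this two-sided inclusion into a local Lipschitz-graph representation of $\partial E$ with a slope depending on $\P$ alone. Setting $e_A:=\tfrac{1}{2N}\sum_{p\in A}p$, one has $|e_A|\leq\tfrac12$ and, by the definition \eqref{eqn:s3} of $\sigma_3$, $\overline{\co_r(A)}$ contains the ball $B_{\sigma_3 r}(r\,e_A)$; hence $\co_r(A)$ contains a circular cone about $e_A$ of half-angle $\theta$ with $\sin\theta\geq\sigma_3/|e_A|\geq 2\sigma_3$, and $\theta\geq\theta(\P)>0$ by Lemma~\ref{lem:rv}. The standard argument for sets obeying a two-sided cone condition then shows that, for $x_0\in\partial E$, the set $\partial E\cap B_{r/2}(x_0)$ is the graph, over a subset of the $(N-1)$-disk obtained by projecting $B_{r/2}(x_0)$ onto the hyperplane through $x_0$ orthogonal to $e_A$, of a function with Lipschitz constant $L:=\cot\theta\leq L(\P)$: for $z_1,z_2\in\partial E\cap B_r(x_0)$ the inclusions above force $z_2-z_1\notin\co_r(A)\cup(-\co_r(A))$, which yields $|(z_2-z_1)\cdot e_A|\leq\cot\theta\,|(z_2-z_1)_\perp|$, with $(\cdot)_\perp$ the component orthogonal to $e_A$.

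Next I would cover $\partial E$ by finitely many such neighborhoods and add up. Choosing a maximal $\tfrac r2$-separated subset $\{x_i\}_{i=1}^m$ of $\partial E$, the balls $B_{r/2}(x_i)$ cover $\partial E$, while the balls $B_{r/4}(x_i)$ are pairwise disjoint and contained in $B_{R+r/4}(0)$, so a volume comparison gives $m\leq(1+4R/r)^N=:m(r,R,N)$. Each $x_i\in\partial E\subset\B_r^\cc$, so by the previous step and the area formula
$$\H^{N-1}\bigl(\partial E\cap B_{r/2}(x_i)\bigr)\;\leq\;\sqrt{1+L^2}\;\omega_{N-1}\,(r/2)^{N-1},$$
with $\omega_{N-1}$ the volume of the unit ball in $\R^{N-1}$. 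Since $E$ is a Lipschitz domain (Theorem~\ref{thm:lpb}), $\per(E)=\H^{N-1}(\partial E)$, and summing over $i$,
$$\per(E)\;\leq\;\sum_{i=1}^m\H^{N-1}\bigl(\partial E\cap B_{r/2}(x_i)\bigr)\;\leq\;m\,\sqrt{1+L^2}\;\omega_{N-1}\,(r/2)^{N-1}\;=:\;C,$$
a constant depending only on $\P$, $r$, $R$ and $N$; if $\per$ denotes the anisotropic perimeter $\per_\phi$, the same bound times $M_\phi=\sup_{\S^{N-1}}\phi$ works, which accounts for the dependence on $\phi$ recorded in the statement.

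The only genuinely nontrivial point is the first step — extracting a Lipschitz graph with a constant controlled by $\P$ alone from the cone inclusion of Theorem~\ref{thm:lpb}. This hinges on the lower bound $\sigma_3(\P)>0$ from Lemma~\ref{lem:rv}, i.e. on the fact that the simplicial cones $\co_r(A)$ have uniformly positive opening as $A$ ranges over the finitely many bases contained in $\P$; everything else is a routine covering and summation argument.
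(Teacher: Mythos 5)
Your proof is correct and follows essentially the same route as the paper: cover the region $\overline{B_R(0)}\setminus \B_r$ containing $\partial E$ by finitely many balls of radius comparable to $r$, use the cone property of Theorem~\ref{thm:lpb} to represent $\partial E$ in each ball as a Lipschitz graph with constant controlled by $\P$, and sum the resulting $\H^{N-1}$ bounds (you merely make explicit the slope via $\sigma_3$ and the covering count, and note the factor $M_\phi$ for the anisotropic perimeter). One small quantitative caveat: the circular cone inscribed in $\co_r(A)$ is truncated at length roughly $r\sqrt{|e_A|^2-\sigma_3^2}\leq r/2$, so the two-point angular estimate only applies when $|z_2-z_1|$ is below that length; hence the covering balls should have radius $c(\P)\,r$ rather than $r/2$, a harmless adjustment that only changes the constant.
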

\begin{proof}
Set $F:= B_R(0) \setminus \B_r$. There exists a finite number of points $x_i, 1 \leq i \leq m$ in $F$ such that
\begin{align*}
F \subset \bigcup_{1 \leq i \leq m} B_r(x_i).
\end{align*}

As $E$ is a Lipschitz domain from Theorem~\ref{thm:lpb}, it suffices to show that $\H^{N-1}(\partial E \cap B_r(x_i))$ is uniformly bounded for $1 \leq i \leq m$. Here, $\H^{N-1}$ is the $(N-1)$-dimensional Hausdorff measure. Either $\partial E \cap B_r(x_i)$ is empty or it can be represented by a Lipschitz graph. In particular, from the cone condition in Theorem~\ref{thm:lpb}, the Lipschitz constant only depends on $r$ and $\phi$ and thus we conclude. 
\end{proof}

Next, let us recall the uniform density from \cite[Definition 4]{Kra16}. Let $c \in (0,1)$ and $s_0 > 0$. We say that $\Om \subset \Rn$ has $(s_0, c)$-uniform lower density if the estimate
\begin{align*}
0 < c \leq \frac{|B_s(x) \cap \Om|}{|B_s(x)|}
\end{align*}
holds for all $ s \in (0, s_0)$ and $x \in \partial \Om$. Similarly, $\Om$ is said to have $(s_0, c)$-uniform upper density if
\begin{align*}
\frac{|B_s(x) \cap \Om|}{|B_s(x)|} \leq 1 - c <1.
\end{align*}
When both conditions are satisfied together, $\Om$  has $(s_0, c)$-uniform density.

\begin{lem}\cite[Theorem 4]{Kra16}
\label{lem:kra}
Let $\Om \subset \Rn$ have $(s_0, c)$-uniform density. Then
\begin{align*}
| \{ x \in \Rn : 0 < d(x, \Om) < s \} | \leq C \left(1 + \tfrac{1}{c} \right)^{\frac{N-1}{N}} \per(\Om) s \hbox{ for all } s \in (0, s_0).
\end{align*}
Here, $C$ is a dimensional constant.
\end{lem}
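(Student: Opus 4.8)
The estimate is trivial when $\per(\Om)=\infty$, so the plan is to treat the case of finite perimeter by a packing argument: cover the set $T_s:=\{x\in\Rn : 0<d(x,\Om)<s\}$ by a controlled number of balls of radius comparable to $s$, and then bound that number in terms of $\per(\Om)$ using the relative isoperimetric inequality, which the uniform density hypothesis makes available on every small ball centered on $\partial\Om$.

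First I would fix, via Zorn's lemma, a maximal family $\{B_s(x_i)\}_{i\in I}$ of pairwise disjoint balls of radius $s$ with centers $x_i\in\partial\Om$. By maximality every $y\in\partial\Om$ satisfies $|y-x_i|<2s$ for some $i$ (otherwise $B_s(y)$ could be added to the family), and since any $x\in T_s$ has a nearest point $\pi(x)\in\partial\Om$ with $|x-\pi(x)|=d(x,\Om)<s$, this forces $x\in B_{3s}(x_i)$. Hence $T_s\subset\bigcup_{i\in I}B_{3s}(x_i)$ and therefore $|T_s|\le 3^N\,|B_1(0)|\,s^N\,\#I$.

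The second step is to bound $\#I$ from above. For each $i$ the $(s_0,c)$-uniform density of $\Om$ gives $c\,|B_s(0)|\le|\Om\cap B_s(x_i)|\le(1-c)\,|B_s(0)|$, so that \emph{both} $|\Om\cap B_s(x_i)|$ and $|B_s(x_i)\setminus\Om|$ are at least $c\,|B_1(0)|\,s^N$. The relative isoperimetric inequality in $B_s(x_i)$ — with a dimensional constant $c_N$ obtained from the unit-ball case by scaling — then yields
\begin{align*}
\per(\Om;B_s(x_i))\ \ge\ c_N^{-1}\Big(\min\{\,|\Om\cap B_s(x_i)|,\,|B_s(x_i)\setminus\Om|\,\}\Big)^{\frac{N-1}{N}}\ \ge\ c_N^{-1}\,|B_1(0)|^{\frac{N-1}{N}}\,c^{\frac{N-1}{N}}\,s^{N-1}.
\end{align*}
Since the balls $B_s(x_i)$ are disjoint and $\per(\Om;\cdot)$ is a Borel measure, summing over $i$ yields $\#I\le c_N\,|B_1(0)|^{-\frac{N-1}{N}}\,c^{-\frac{N-1}{N}}\,s^{1-N}\,\per(\Om)$ (in particular $I$ is finite). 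Plugging this into the bound on $|T_s|$ and using $c\in(0,1)$, so that $c^{-\frac{N-1}{N}}\le(1+\tfrac1c)^{\frac{N-1}{N}}$, gives the claim with $C:=3^N\,|B_1(0)|^{1/N}\,c_N$.

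The main obstacle is conceptual rather than computational: one must recognize that it is precisely the uniform density hypothesis — and in particular the \emph{upper} bound, which forces $|B_s(x_i)\setminus\Om|$ to be nondegenerate as well — that converts the relative isoperimetric inequality into a uniform lower bound for the perimeter of $\Om$ inside each ball $B_s(x_i)$; without such a nondegeneracy condition the tube estimate fails for general finite-perimeter sets. Everything else is a standard Vitali-type packing computation, with the $\per(\Om)=\infty$ case disposed of at the outset. This is the content of \cite[Theorem~4]{Kra16}, and the sketch above only recalls the argument for completeness.
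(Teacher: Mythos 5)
This lemma is not proved in the paper at all: it is quoted verbatim from \cite[Theorem 4]{Kra16}, so there is no internal argument to compare yours against. On its own terms your sketch is correct and is the standard route: the maximal-disjoint-ball (Vitali-type) packing of $\partial\Om$, the inclusion $\{0<d(\cdot,\Om)<s\}\subset\bigcup_i B_{3s}(x_i)$ via the nearest-point projection, and the count of balls via the relative isoperimetric inequality, where the two-sided $(s_0,c)$-density is exactly what makes $\min\{|\Om\cap B_s(x_i)|,|B_s(x_i)\setminus\Om|\}\geq c|B_1(0)|s^N$ and hence $\per(\Om;B_s(x_i))\gtrsim c^{(N-1)/N}s^{N-1}$; summing over the disjoint balls and using $c^{-1}\leq 1+c^{-1}$ gives the stated constant. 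The only hypotheses you implicitly need—that $\Om$ has (locally) finite perimeter so that $\per(\Om;\cdot)$ is a measure and the relative isoperimetric inequality applies—are covered by your reduction to $\per(\Om)<\infty$ at the outset, so I see no gap; whether this coincides step-by-step with Kraft's original proof is immaterial for this paper, which only uses the statement (through Proposition~\ref{prop:dis}).
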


As a consequence of Theorem~\ref{thm:lpb} and the above lemma, we conclude.

\begin{prop}
\label{prop:dis}
Suppose that $E$ satisfies \eqref{eqn:rp} and $E$ contains $\B_r$ for some $r>0$. Then, there exists $c = c(\P)>0$ such that 
\begin{align}
\label{eqn:dis}
| \{ x \in \Rn : 0 < d(x, E) < s \} | \leq c s
\end{align}
for all $s \in (0,r)$. Here, $\B_r$ is given in \eqref{eqn:b}
\end{prop}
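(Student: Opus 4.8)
The plan is to combine the cone property of Theorem~\ref{thm:lpb} with the uniform density result of Lemma~\ref{lem:kra}. The key point is that a set $E$ satisfying \eqref{eqn:rp} and containing $\B_r$ has uniformly bounded perimeter and satisfies a uniform interior/exterior density estimate, with constants depending only on $\P$; once these are in place, \eqref{eqn:dis} is immediate from Lemma~\ref{lem:kra}.

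\textbf{Step 1: Uniform density.} First I would show that there is $c_0 = c_0(\P) \in (0,1)$ and $s_0 = s_0(\P, r)$ such that $E$ has $(s_0, c_0)$-uniform density. Fix $x \in \partial E$. Since $E \supset \B_r$, the cone property \eqref{eqn:lpb12} of Theorem~\ref{thm:lpb} applies: there is a basis $A \subset \P$ (depending only on $x$, hence on $E$ only through the point) such that for every $y \in E \cap B_r(x)$ we have $y - \co_r(A) \subset E$, and for every $w \in E^\cc \cap B_r(x)$ we have $w + \co_r(A) \subset E^\cc$. Taking $y = w = x$ (which lies in $\partial E \subset \overline E$ and in $\overline{E^\cc}$, so we may approximate by interior/exterior points and pass to the limit, or simply use that $x$ is a density point of both sides), we get two opposite open simplicial cones of aperture controlled by $\P$, one inside $E$ and one inside $E^\cc$, both with vertex $x$. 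Intersecting with a small ball $B_s(x)$, each cone contributes a fixed fraction of $|B_s(x)|$ for all $s \leq r$; this fraction depends only on the finitely many possible bases $A \subset \P$, hence only on $\P$. This gives the $(s_0, c_0)$-uniform density with $s_0 = r$ (or any value $\leq r$) and $c_0 = c_0(\P)$.

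\textbf{Step 2: Perimeter bound and conclusion.} By Theorem~\ref{thm:bal}, $E$ satisfying \eqref{eqn:rp} and containing $\B_r$ is automatically bounded, say $E \subset B_R(0)$ with $R = R(\P, r)$; then Lemma~\ref{lem:per} gives $\per(E) \leq C(\phi, r, R) = C'(\P, \phi, r)$. (If one wants the constant in \eqref{eqn:dis} to depend only on $\P$, one should normalize or absorb this into $c$; as stated the Proposition only claims $c = c(\P)$, so I would double-check the dependency — the perimeter bound does in principle see $\phi$ through Lemma~\ref{lem:per}, but since the cone directions and apertures there depend only on $\P$, the Lipschitz constants and hence $\per(E)$ are in fact controlled by $\P$ and $R$ alone, and $R$ by $\P$ and $r$.) Now apply Lemma~\ref{lem:kra}: for all $s \in (0, r)$,
\begin{align*}
|\{x \in \Rn : 0 < d(x,E) < s\}| \leq C\left(1 + \tfrac{1}{c_0}\right)^{\frac{N-1}{N}} \per(E)\, s =: c(\P)\, s,
\end{align*}
which is exactly \eqref{eqn:dis}.

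\textbf{Main obstacle.} The main subtlety is Step~1: verifying that the cone property at a boundary point $x \in \partial E$ (as opposed to at interior or exterior points of $B_r(x_0)$) gives genuine positive density on both sides with a constant uniform in $E$. One must be careful that \eqref{eqn:lpb12} is stated for $x \in \Omega^\cc \cap B_r(x_0)$ and $y \in \Omega \cap B_r(x_0)$ with $x_0 \in \B_r^\cc$; a boundary point $x$ of $E$ can be taken as a limit of both interior and exterior points near such an $x_0$ (choosing $x_0 \in \B_r^\cc$ within distance $r/2$ of $x$, as done in the proof of Proposition~\ref{lem:cpt}), and the open cones $x \pm \co_r(A)$ then lie in $E$ and $E^\cc$ respectively by openness. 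The uniformity of $c_0$ follows since $A$ ranges over a finite set determined by $\P$. The rest is routine.
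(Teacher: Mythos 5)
Your proposal is correct and takes essentially the same route as the paper: the cone property of Theorem~\ref{thm:lpb} gives uniform interior/exterior density at boundary points (the paper quantifies it as $(r,\sigma_3^N)$-uniform density), and then Lemma~\ref{lem:per} together with Lemma~\ref{lem:kra} yields \eqref{eqn:dis}. Your parenthetical worry about the constant's dependence (through the perimeter bound, which sees the out-radius of $E$) only mirrors an imprecision already present in the paper's own statement and proof, so it does not change the comparison.
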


\begin{proof}
We claim that $E$ has $(r, \sigma_3^N)$-uniform density for $\sigma_3$ given in \eqref{eqn:s3}. For all $s \in (0,r)$, $E$ has an $s$-interior cone and an $s$-exterior cone from Theorem~\ref{thm:lpb}. As $s$-interior and exterior cones are contained in a ball of radius $s$ and contains a ball of radius $\sigma_3 s$ for $\sigma_3$ given in \eqref{eqn:s3}, we conclude that
\begin{align*}
\sigma_3^N \leq \frac{|B_s(x) \cap E|}{|B_s(x)|} \leq 1 - \sigma_3^N.
\end{align*}
Then, we apply Lemma~\ref{lem:kra} and Lemma~\ref{lem:per} to conclude \eqref{eqn:dis}.
\end{proof}


A function $f : \Rn \to \R$ is called \emph{positively one-homogeneous} if
\begin{align}
\label{eqn:one-homogeneous}
f(s\xi) = s f(\xi) \qquad \text{for all } \xi \in \R^N \text{ and } s \geq 0.
\end{align}

Recall the definition of the Wulff shape $W_f$ in \eqref{Wulff}.
\begin{lem}
\label{lem:bw}
For positively one-homogeneous functions $f, g : \Rn \to \R$ with $f \leq g$ we have
\begin{align*}
\oB_{m_f}(0) \subset W_f \subset W_g \subset \oB_{M_g}(0),
\end{align*}
Here, $m_f$, $M_g$ are given in \eqref{eqn:mphi}.
\end{lem}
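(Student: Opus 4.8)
The plan is to establish the four-term chain
\[
\oB_{m_f}(0)\ \subset\ W_f\ \subset\ W_g\ \subset\ \oB_{M_g}(0)
\]
one link at a time, each being a direct consequence of the definition \eqref{Wulff} of the Wulff shape together with positive one-homogeneity \eqref{eqn:one-homogeneous} and the Cauchy--Schwarz inequality. There is no delicate machinery involved; the work is entirely in unwinding definitions.

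The only step with any substance is the first inclusion $\oB_{m_f}(0)\subset W_f$. I would fix $x$ with $|x|\le m_f$ and check that $x\cdot p\le f(p)$ for every $p\in\Rn$. For $p=0$ this reads $0\le f(0)$, which holds since one-homogeneity forces $f(0)=0$. For $p\neq 0$ I would normalize $\hat p:=p/|p|\in\S^{N-1}$ and chain the estimates
\[
x\cdot p=|p|\,(x\cdot\hat p)\ \le\ |p|\,|x|\ \le\ |p|\,m_f\ \le\ |p|\,f(\hat p)\ =\ f(p),
\]
using Cauchy--Schwarz, $|x|\le m_f$, the definition of $m_f$ as an infimum over $\S^{N-1}$, and one-homogeneity in the last equality. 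Hence $x\in W_f$.

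The remaining two inclusions are immediate. For $W_f\subset W_g$: if $x\cdot p\le f(p)$ for all $p$ and $f\le g$, then $x\cdot p\le g(p)$ for all $p$, so $x\in W_g$. For $W_g\subset\oB_{M_g}(0)$: given $x\in W_g$ with $x\neq 0$, testing the defining inequality against $p:=x/|x|\in\S^{N-1}$ gives $|x|=x\cdot p\le g(p)\le M_g$, and $x=0$ is trivial. Concatenating the three inclusions proves the lemma. I do not anticipate a real obstacle; the only points worth a word of care are the degenerate direction $p=0$ (handled by $f(0)=g(0)=0$) and the implicit requirement $m_f\ge 0$ for $\oB_{m_f}(0)$ to be meaningful, which is satisfied in all applications here since $\phi$ and $\psi$ are positive away from the origin.
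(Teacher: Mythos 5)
Your proof is correct and is essentially the paper's argument: the paper simply packages your two Cauchy--Schwarz computations as the observation that $\oB_r(0)=W_h$ for $h(p):=r|p|$, and then applies the monotonicity $W_f\subset W_g$ (for $f\le g$) to the comparisons $m_f|\cdot|\le f$ and $g\le M_g|\cdot|$. Your direct verification, including the $p=0$ and $x=0$ cases, is the same content unwound explicitly.
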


\begin{proof}
$W_f \subset W_g$ is clear from the definition. For the ordering with $\oB_{m_f}(0)$ and $\oB_{M_g}(0)$, we note that $\oB_r(0) = W_h$ for $h(p) := r|p|$ for any $r \geq 0$.
\end{proof}

\section{Technical lemmas}

\begin{lem}
\label{le:uhat-limsup}
Suppose that $u_k: \Rn\times \R \to \R$ is a locally-bounded sequence of upper semi-continuous functions and let $u := \limsups_k u_k$. Let $r_k \in C(\R)$ be a sequence of non-negative continuous functions such that $r_k \to r$ locally uniformly. Then
\begin{align*}
\sup_{\overline B_{r(t)}(x)} u(\cdot, t) = \limsups_{\substack{k\to\infty\\(\xi,s) \to (x,t)}} \sup_{\overline B_{r_k(s)(\xi)}} u_k(\cdot, s).
\end{align*}
\begin{align*}
\hat u(x, t; r) = \limsup_{\substack{k\to\infty\\(x_k,t_k) \to (x,t)}} \hat u_k(x_k, t_k; r_k)
\end{align*}
\end{lem}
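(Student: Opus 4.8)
The plan is to establish the two inequalities between the left-hand side $F(x,t):=\sup_{\overline B_{r(t)}(x)}u(\cdot,t)$ and the right-hand side $G(x,t):=\limsups_{k\to\infty,\,(\xi,s)\to(x,t)}F_k(\xi,s)$, where $F_k(\xi,s):=\sup_{\overline B_{r_k(s)}(\xi)}u_k(\cdot,s)$. A few preliminary observations: each $u_k$ being upper semicontinuous and each closed ball compact, $F_k(\xi,s)$ is finite and attained at some point of $\overline B_{r_k(s)}(\xi)$; the locally uniform convergence $r_k\to r$ forces $r$ to be continuous and $r_{k_m}(s_m)\to r(t)$ whenever $k_m\to\infty$ and $s_m\to t$; and the local boundedness of $\{u_k\}$ keeps $F$, the $F_k$, $u$ and $G$ finite near $(x,t)$. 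It then remains to prove $G\le F$ and $F\le G$.

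For $G\le F$, choose $k_m\to\infty$ and $(\xi_m,s_m)\to(x,t)$ realizing the upper half-relaxed limit, so that $F_{k_m}(\xi_m,s_m)\to G(x,t)$, and pick $z_m\in\overline B_{r_{k_m}(s_m)}(\xi_m)$ with $u_{k_m}(z_m,s_m)=F_{k_m}(\xi_m,s_m)$. Since $\xi_m\to x$ and $r_{k_m}(s_m)\to r(t)$, the points $z_m$ lie in a fixed compact set, so along a subsequence $z_m\to z$ with $|z-x|\le r(t)$, i.e.\ $z\in\overline B_{r(t)}(x)$. By the defining property of $u=\limsups_k u_k$ we get $u(z,t)\ge\limsup_m u_{k_m}(z_m,s_m)=G(x,t)$, hence $F(x,t)\ge u(z,t)\ge G(x,t)$.

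For $F\le G$, fix $z\in\overline B_{r(t)}(x)$; it suffices to show $u(z,t)\le G(x,t)$. From $u=\limsups_k u_k$ there are $k_m\to\infty$, $y_m\to z$, $s_m\to t$ with $u_{k_m}(y_m,s_m)\to u(z,t)$. The only difficulty is that $y_m$ need not lie in $\overline B_{r_{k_m}(s_m)}(x)$ when $z$ sits on the sphere $\partial B_{r(t)}(x)$; this is the step that calls for a perturbation of the center. Set $\xi_m:=x$ if $|y_m-x|\le r_{k_m}(s_m)$, and $\xi_m:=x+\bigl(|y_m-x|-r_{k_m}(s_m)\bigr)\frac{y_m-x}{|y_m-x|}$ otherwise. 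In both cases $|y_m-\xi_m|\le r_{k_m}(s_m)$, so $y_m\in\overline B_{r_{k_m}(s_m)}(\xi_m)$ and $F_{k_m}(\xi_m,s_m)\ge u_{k_m}(y_m,s_m)$; moreover $|\xi_m-x|=\max\{|y_m-x|-r_{k_m}(s_m),0\}\to\max\{|z-x|-r(t),0\}=0$, so $\xi_m\to x$. Letting $m\to\infty$ gives $G(x,t)\ge\limsup_m u_{k_m}(y_m,s_m)=u(z,t)$, and taking the supremum over $z$ yields $F\le G$.

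The main obstacle is exactly this boundary case: an optimal point for $u(\cdot,t)$ on $\overline B_{r(t)}(x)$ can in general be approximated only by points that fall slightly outside the shrunken balls $\overline B_{r_{k_m}(s_m)}(x)$, and the center-perturbation trick above is what repairs this; everything else is bookkeeping with the definitions of the half-relaxed limits and compactness of closed balls. Finally, the $\hu$-version of the statement (the one used in Proposition~\ref{pr:crystalline-half-limits}) follows by the identical argument with $\overline B_1(0)$ replaced by the Wulff shape $W_\psi$ and the Euclidean norm replaced by the (generally non-symmetric) gauge associated with $W_\psi$, which is finite, positively one-homogeneous and continuous since it is comparable to $|\cdot|$ by Lemma~\ref{lem:bw}; the perturbation of the center is then carried out along the segment joining $y_m$ to $x$ exactly as above.
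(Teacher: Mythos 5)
Your proposal is correct and follows essentially the same route as the paper's proof: one inequality by extracting a convergent sequence of maximizing points (using compactness of the balls and the locally uniform convergence $r_k\to r$) together with the defining property of $\limsups$, and the reverse inequality by perturbing the center along the segment from $y_m$ toward $x$ — your formula $\xi_m = x + (|y_m-x|-r_{k_m}(s_m))\tfrac{y_m-x}{|y_m-x|}$ is exactly the paper's choice $x_k = s_k x + (1-s_k)y_k$ with $s_k=\min(1, r_k(t_k)/|y_k-x|)$. The remark that the Wulff-shape sup-convolution version follows by the same argument with the gauge of $W_\psi$ is also consistent with how the paper uses the lemma.
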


\begin{proof}
Fix $(x,t)$ and $y \in \overline B_{r(t)}(x)$ such that $u(y, t) = \sup_{\overline B_{r(t)}(x)} u(\cdot, t) =: \hat u(x, t;r)$.

Suppose $(x_k, t_k) \to (x,t)$. We again fix $y_k \in \overline B_{r_k(t_k)}(x_k)$ such that $u_k(y_k, t_k) = \sup_{\overline B_{r(t_k)}(x_k)} u(\cdot, t_k) =: \hat u_k(x_k, t_k; r_k)$.

Consider a subsequence $k_m$ so that $\lim_m u_{k_m}(y_{k_m}, t_{k_m}) = \limsup_k u_k(y_k, t_k)$. Selecting a further subsequence (not relabeled), we may assume that $y_{k_m} \to z$. We have
\begin{align*}
|z - x| \leq |z - y_{k_m}| + |y_{k_m} - x_{k_m}| + |x_{k_m} - x| \leq |z - y_{k_m}| + r_{k_m}(t_{k_m}) + |x_{k_m} - x| \to r(t).
\end{align*}
This implies that $u(z, t) \leq u(y, t)$. Since $u = \limsups u_k$, we conclude that 
\begin{align*}
\limsup_k \hat u_k(x_k, t_k; r_k) = \limsup_k u_k(y_k, t_k) = \lim_m u_{k_m}(y_{k_m}, t_{k_m}) \leq u(z, t) \leq u(y, t).
\end{align*}
Since the sequence $\{(x_k, t_k)\}$ was arbitrary, we conclude that
\[
\hat u(x, t; r) \geq \limsup_{\substack{k\to\infty\\(x_k,t_k) \to (x,t)}} \hat u_k(x_k, t_k; r_k).
\]

To show the equality, we consider a maximizing sequence, i.e., we choose $(y_k, t_k) \to (y, t)$ such that $u(y,t) = \limsup_k u_k(y_k, t_k)$. We can also take a sequence $\{x_k\}$ such that $y_k \in \overline B_{r_k(t_k)}(x_k)$ and $x_k \to x$. Indeed, take $x_k = s_kx +(1- s_k)y_k$, where
\begin{align*}
s_k := \min(1, r_k(t_k) / |y_k - x|).
\end{align*}
We then have
\begin{align*}
\limsup_k \hat u_k(x_k, t_k; r_k) \geq \limsup u_k(y_k, t_k) = u(y, t) = \hat u(x, t; r).
\end{align*}
\end{proof}

\begin{lem}
\label{le:wulff-sup-conv}
Suppose that $\psi, \phi: \Rn \to [0, \infty)$ are positively one-homogeneous convex functions, with zero only at $p=0$ and suppose that $\phi \in C^2(\Rn \setminus \{0\})$.
Suppose that $u$ is a viscosity subsolution of $u_t = \psi(-Du) (-\dv D\phi(-Du) + \lambda)$ for some $\lambda \in C(\R)$. Then for any positive $R \in C^1(\R)$, 
$\hu(\cdot; R)$ from \eqref{eqn:sup} is a viscosity subsolution of $u_t = \psi(-Du) (-\dv D\phi(-Du) + \lambda + R')$.
\end{lem}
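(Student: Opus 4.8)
The plan is to reduce the statement to the pointwise (touching) characterization of viscosity sub/supersolutions of \eqref{eqn:the}, which is equivalent to Definition~\ref{def:visd} via the argument of \cite[Sec.~7.2]{CafSal05} already invoked above, and then to \emph{transport a test function} from $\hu(\cdot;R)$ back to $u$ along a single, frozen Wulff direction. Concretely, suppose $\zeta\in C^{2,1}$ touches $(\hu)^* = \widehat{u^*}$ (using the identity $\widehat{u^*}=(\hu)^*$, as in \cite{KimKwo18b}) from above at a point $(x_0,t_0)$, and set $p_0:=D\zeta(x_0,t_0)$. Since $u^*(\cdot,t_0)$ is upper semicontinuous and $R(t_0)W_\psi$ is compact with $0$ in its interior (here we use $R(t_0)>0$ and that $\psi$ is positive away from the origin), there is a maximizer $y_0\in x_0-R(t_0)W_\psi$ of $u^*(\cdot,t_0)$ over that set, with $\widehat{u^*}(x_0,t_0)=u^*(y_0,t_0)$; put $w_0:=x_0-y_0\in R(t_0)W_\psi$ and $\omega_0:=w_0/R(t_0)\in W_\psi$. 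The candidate test function for $u$ is $g(y,t):=\zeta(y+R(t)\omega_0,t)$, which again belongs to $C^{2,1}$ near $(y_0,t_0)$ because $R\in C^1$, and which touches $u^*$ from above there: for $(y,t)$ near $(y_0,t_0)$ one has $u^*(y,t)\le \widehat{u^*}(y+R(t)\omega_0,t)\le \zeta(y+R(t)\omega_0,t)=g(y,t)$ — the first inequality because $y\in (y+R(t)\omega_0)-R(t)W_\psi$ and the second because $\zeta\ge\widehat{u^*}$ near $x_0$ — while $g(y_0,t_0)=\zeta(x_0,t_0)=u^*(y_0,t_0)$.

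It then remains to differentiate $g$ at $(y_0,t_0)$ and to feed the inequality that $u$ (a subsolution of \eqref{eqn:the}) satisfies through the change of variables. One has $D_yg(y_0,t_0)=p_0$ and $D^2_yg(y_0,t_0)=D^2\zeta(x_0,t_0)$, so the curvature operator is literally unchanged, and the only new contribution comes from $\partial_tg(y_0,t_0)=\zeta_t(x_0,t_0)+R'(t_0)\,(p_0\cdot\omega_0)$. The key geometric identity is $p_0\cdot\omega_0=-\psi(-p_0)$: since $\zeta\ge \widehat{u^*}$ near $x_0$ and $\widehat{u^*}(x,t_0)\ge u^*(y_0,t_0)$ for every $x\in y_0+R(t_0)W_\psi$, the point $x_0$ is a local minimum of $\zeta(\cdot,t_0)$ over $y_0+R(t_0)W_\psi$, so $-p_0$ lies in the normal cone of $R(t_0)W_\psi$ at $w_0$; equivalently $w_0$ minimizes $p_0\cdot w$ over $R(t_0)W_\psi$, and since $\psi$ is the support function of $W_\psi$ we get $p_0\cdot w_0=-R(t_0)\psi(-p_0)$, hence $p_0\cdot\omega_0=-\psi(-p_0)$. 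Substituting, the subsolution inequality $\partial_tg(y_0,t_0)\le \psi(-p_0)\big(\trace(D^2_p\phi(-p_0)\,D^2_yg(y_0,t_0))+\lambda(t_0)\big)$ becomes exactly $\zeta_t(x_0,t_0)\le \psi(-p_0)\big(\trace(D^2_p\phi(-p_0)\,D^2\zeta(x_0,t_0))+\lambda(t_0)+R'(t_0)\big)$, which is the subsolution inequality for $\hu(\cdot;R)$ relative to the equation with forcing $\lambda+R'$.

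Two minor points finish the plan. In the singular case $p_0=D\zeta(x_0,t_0)=0$ one only needs $\zeta_t(x_0,t_0)\le 0$ under the standard additional requirement $D^2\zeta(x_0,t_0)=0$; then $D_yg(y_0,t_0)=0$, $D^2_yg(y_0,t_0)=0$, so $\partial_tg(y_0,t_0)\le 0$ because $u$ is a subsolution, and $\partial_tg(y_0,t_0)=\zeta_t(x_0,t_0)$ since $p_0\cdot\omega_0=0$. The precise passage between Definition~\ref{def:visd} and the touching formulation, and the envelope identity $\widehat{u^*}=(\hu)^*$, are as in \cite{KimKwo18b}. The place where the non-smoothness of $\psi$ could have obstructed the argument — differentiating the Wulff offset $R(t)W_\psi$ — is exactly what this construction avoids: by freezing the single direction $\omega_0$ realizing the convolution we never differentiate $\psi$ or $\partial W_\psi$, and the only use of $W_\psi$ is through the elementary support-function / normal-cone identity above. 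The main thing to get right is therefore the chain of inclusions showing that $g$ touches $u^*$ from above, together with a consistent bookkeeping of the semicontinuous envelopes; the rest is a short computation.
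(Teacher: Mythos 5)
Your proposal is correct and follows essentially the same route as the paper's proof: the paper likewise fixes the maximizing point $\hat y$ in $\hat x - R(\hat t)W_\psi$, transports the test function via $\hat\varphi(x,t):=\varphi\bigl(x+R(t)\tfrac{\hat x-\hat y}{R(\hat t)},t\bigr)$ (your $g$ with the frozen direction $\omega_0$), and uses the same outer-normal/support-function identity $-D\varphi\cdot\tfrac{\hat x-\hat y}{R(\hat t)}=\psi(-D\varphi)$ to turn the extra time-derivative term into $-\psi(-D\varphi)R'$. The only cosmetic differences are your explicit bookkeeping of the semicontinuous envelopes and your treatment of the degenerate case $p_0=0$ via the standard $D^2\zeta=0$ reduction, where the paper argues directly with the envelope $F^*$.
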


\begin{proof}
Without loss of generality we may assume that $u$ is upper semi-continuous.
To simplify the notation we write $\hu(x, t)$ instead of $\hu(x, t; R)$.
Let $\varphi$ be a smooth test function such that $\hu - \varphi$ has a maximum $0$ at $(\hat x, \hat t)$. Recall that we need to show $\varphi_t \leq F^*(\hat t, D\varphi, D^2\varphi)$ at $(\hat x, \hat t)$, where $F(t, p, X) := \psi(-p)\left(\trace [D_p^2\phi(-p) X]  + \lambda + R'\right)$, $p \neq 0$.

\medskip

Due to the assumption we have 
$$\varphi(x,t)\geq \hu(x,t)= \max_{x - R(t)W_\psi} u(\cdot, t)$$
with equality at $(\hat x, \hat t)$. We now fix $\hat y \in \hat x -  R(\hat t) W_\psi$ such that $u(\hat y, \hat t) = \hu(\hat x, \hat t)$. 
Note that from the definition $R(t) \tfrac{\hat x - \hat y}{R(\hat t)} \in R(t) W_\psi$ and so $x - R(t) \tfrac{\hat x - \hat y}{R(\hat t)} \in x - R(t) W_\psi$, which yields
\begin{align*}
\varphi(x,t) \geq \hu(x,t) \geq u(x - R(t) \tfrac{\hat x - \hat y}{R(\hat t)}, t)
\end{align*}
for all $x, t$ with equality at $(\hat x, \hat t)$. Thus we deduce
\begin{align*}
\hat \varphi(x, t) := \varphi(x + R(t) \tfrac{\hat x - \hat y}{R(\hat t)},t) \geq  u(x, t) 
\end{align*}
for all $x$, $t$ with equality at $(\hat y, \hat t)$. In particular, $u - \hat \varphi$ has a local maximum at $(\hat y, \hat t)$.

\medskip

Now a direct computation yields $D\hat \varphi(\hat y, \hat t) = D\varphi(\hat x, \hat t)$, $D^2 \hat \varphi(\hat y, \hat t) = D^2 \varphi(\hat x, \hat t)$ and
\begin{align}
\label{time-der-rel}
\hat \varphi_t (\hat y, \hat t)= \varphi_t (\hat x, \hat t)+ D\varphi (\hat x, \hat t)\cdot \tfrac{\hat x - \hat y}{R(\hat t)} R'(\hat t),
\end{align}

If $D\varphi(\hat x, \hat t ) = 0$ this simply yields $\hat \varphi_t(\hat y, \hat t)= \varphi_t(\hat x, \hat t)$ and we conclude that the correct viscosity solution condition is satisfied for $\varphi$ since $u$ is a viscosity solution with right-hand side $F(t, p, X) - \psi(-p) R'(t)$ by assumption.

Now suppose that $D\varphi(\hat x, \hat t ) \neq 0$.
As 
\begin{align*}
\varphi(x, \hat t) \geq \hu(x, \hat t) \geq u(\hat y, \hat t) = \varphi(\hat x, \hat t) \qquad \text{for }x \in \hat y + R(\hat t) W_\psi,
\end{align*}
we deduce that $ \hat y + R(\hat t)W_{\psi} \subset \{\varphi(\cdot, \hat t) \geq \varphi(\hat x, \hat t)\}$.
In particular, $-D\varphi(\hat x, \hat t)$ is an outer normal to $W_\psi$ at $(\hat x - \hat y)/ R(\hat t)$.
Therefore, by definition of $W_\psi = \{x : x\cdot p \leq \psi(p) \ \forall p\}$ and the fact that $\psi$ is positively one-homogeneous and convex,  
\begin{align*}
-D\varphi(\hat x, \hat t) \cdot \tfrac{\hat x - \hat y}{R(\hat t)} = \psi(-D\varphi(\hat x, \hat t)).
\end{align*}
which yields together with \eqref{time-der-rel} 
\begin{align*}
\hat \varphi_t (\hat y, \hat t)= \varphi_t (\hat x, \hat t)- \psi(-D\varphi(\hat x, \hat t)) R'(\hat t),
\end{align*}
again yielding the correct viscosity condition for $\varphi$ at $(\hat x, \hat t)$ from the viscosity solution condition that $\hat \varphi$ satisfies at $(\hat y, \hat t)$.

\medskip

We conclude that $\hu$ is a viscosity subsolution of $u_t = \psi(-Du) (-\dv D\phi(-Du) + \lambda + R')$.
\end{proof}

\bibliographystyle{alpha}
\bibliography{paper_ACMCF}

\end{document}